\theoremstyle{plain}
\newtheorem{theorem}{Theorem}[section]
\newtheorem{Theorem}[theorem]{Theorem}
\newtheorem{Proposition}[theorem]{Proposition}
\newtheorem{Lemma}[theorem]{Lemma}
\newtheorem{Corollary}[theorem]{Corollary}
\theoremstyle{remark}
\newtheorem{Construction}[theorem]{Construction}
\newtheorem{Notation}[theorem]{Notation}
\newtheorem{Example}[theorem]{Example}
\newtheorem{Remark}[theorem]{Remark}
\newtheorem{alg}[theorem]{Algorithm}
\theoremstyle{definition}
\newtheorem{Definition}[theorem]{Definition}
\newcommand{\NN}{{\mathbb N}}
\newcommand{\PP}{{\mathbb P}}
\newcommand{\QQ}{{\mathbb Q}}
\newcommand{\RR}{{\mathbb R}}
\newcommand{\ZZ}{{\mathbb Z}}
\newcommand{\one}{{\mathbf 1}}
\newcommand{\Ct}{K\{\!\!\{t\}\!\!\}}
\DeclareMathOperator{\Newt}{Newt}
\DeclareMathOperator{\trop}{trop}
\DeclareMathOperator{\Trop}{Trop}
\DeclareMathOperator{\Supp}{Supp}
\DeclareMathOperator{\conv}{conv}
\DeclareMathOperator{\cone}{cone}
\DeclareMathOperator{\val}{val}
\DeclareMathOperator{\rec}{rec}
\DeclareMathOperator{\Rec}{Rec}
\DeclareMathOperator{\Subdiv}{Subdiv}
\DeclareMathOperator{\RHS}{RHS}
\newcommand{\ie}{i.\,e.\ }
\newcommand{\eg}{e.\,g.\ }
\newcommand{\cf}{cf.\ }
\renewcommand{\star}{\operatorname{star}}
\newlength {\sidetextwidth}
\newlength {\sidepicwidth}
\newsavebox {\sidepicbox}
\newenvironment {sidepic}[1]{%
  \par%
  \def\message##1{}%
  \hbadness=10000%
  \setlength{\sidetextwidth}{\textwidth}%
  \savebox{\sidepicbox}{\input{#1}}%
  \settowidth{\sidepicwidth}{\usebox{\sidepicbox}}%
  \addtolength{\sidetextwidth}{-6mm}%
  \addtolength{\sidetextwidth}{-\sidepicwidth}%
  \begin {minipage}{\sidetextwidth}%
  \parskip1ex plus0.5ex minus0.1ex}%
  {\end {minipage}%
  \hfuzz=10000pt%
  \hspace {4mm}%
  \begin {minipage}{\sidepicwidth}\usebox{\sidepicbox}\end {minipage}%
  \hfuzz=0pt%
  \hbadness=1000%
  }
\renewenvironment {enumerate}%
  {\begin {oldenumerate}\parskip1ex plus0.5ex \itemsep 0mm \parindent 0mm}%
  {\end {oldenumerate}}
\renewenvironment {itemize}%
  {\begin {olditemize}\parskip1ex plus0.5ex \itemsep 0mm \parindent 0mm}%
  {\end {olditemize}}
\begin{document}
  \title[Realizability of Tropical Curves in a Plane in the Non-Constant Coefficient Case]{Relative Realizability of Tropical Curves in a Plane in the Non-Constant Coefficient Case}

\author{Anna Lena Birkmeyer and Andreas Gathmann}

\address{Anna Lena Birkmeyer, Fachbereich Mathematik, Technische Universität
  Kaiserslautern, Postfach 3049, 67653 Kaiserslautern, Germany}
  \email{birkmeyer@mathematik.uni-kl.de}

\address{Andreas Gathmann, Fachbereich Mathematik, Technische Universität
  Kaiserslautern, Postfach 3049, 67653 Kaiserslautern, Germany}
  \email{andreas@mathematik.uni-kl.de}

\thanks{\emph {2010 Mathematics Subject Classification:} 14T05}
\keywords{Tropical geometry, tropicalization, tropical realizability}

\begin{abstract}
  Let $X$ be a plane in a torus over an algebraically closed field $K$, with
  tropicalization the matroidal fan $\Sigma$. In this paper we present an
  algorithm which completely solves the question whether a given
  one-dimensional balanced polyhedral complex in $\Sigma$ is relatively
  realizable, \ie whether it is the tropicalization of an algebraic curve over
  $ \Ct $ in $X$. The algorithm implies that the space of all such relatively
  realizable curves of fixed degree is an abstract polyhedral set.

  In the case when $X$ is a general plane in $3$-space, we use the idea of this
  algorithm to prove some necessary and some sufficient conditions for relative
  realizability. For $1$-dimensional polyhedral complexes in $ \Sigma $ that
  have exactly one bounded edge, passing through the origin, these necessary
  and sufficient conditions coincide, so that they give a complete
  non-algorithmic solution of the relative realizability problem.
\end{abstract}

\maketitle

  \section{Introduction}

Although tropical geometry is growing in several directions over the last years
and a wide spectrum of algebraic concepts has already been transferred to the
tropical world, the tropicalization map, and in particular its image, is not
yet well understood. There is still no general approach to decide whether a
given combinatorial object comes from an algebraic one via the tropicalization
map. Results in this direction are currently restricted to rational curves and
hypersurfaces in $ \RR^n $ \cite{MI,SP,NS06}, as well as some partial
statements for the case of elliptic curves \cite{Spe07}.

The relative case of this realizability question is even less studied: if $K$
is an algebraically closed field, $X$ a variety in an $n$-dimensional torus
over the field $ \Ct $ of Puiseux series over $K$, $ \Sigma := \Trop X \subset
\RR^n $ its tropicalization, and $C$ a weighted polyhedral complex in $ \Sigma
$, is there an algebraic subvariety of $X$ that tropicalizes to $C$? So far
this question has only been considered in greater detail in the case when $X$
is a plane defined over $K$ and $C$ a $1$-dimensional balanced fan (the
so-called constant coefficient case). There is then an algorithm to determine
which fans are tropicalizations of curves, as well as some necessary and some
sufficient conditions for relative realizability
\cite{bogartkatz,brugalleshaw,wir}.

In this paper, we will study this relative realizability problem in the case
when $X$ is still a plane defined over $K$, hence $ \Sigma $ a $2$-dimensional
matroidal fan, but $C$ is now a $1$-dimensional balanced polyhedral complex
(which is not necessarily a fan). An example of this situation is shown in the
picture below on the left. After introducing the required background from
polyhedral and tropical geometry in Section \ref{ch:tropgeo}, we will present
an algorithm to decide if $C$ is relatively realizable in Section
\ref{sec-realize}. As in the constant coefficient case in \cite{wir}, this
approach is based on projections to coordinate planes, so that one can work
with tropical curves in $ \RR^2 $ and their dual picture of extended Newton
polytopes. We show that it is sufficient to consider Puiseux series in $ \Ct $
containing only a fixed finite set of powers of $t$, so that the calculations,
presented in Algorithm \ref{alg-1}, can be performed on a computer. The
algorithm is implemented as a library for the computer algebra system Singular,
as explained in Remark \ref{rem-singular} \cite{Sing,Win12}. With its help, we
show in Proposition \ref{prop-polyhedral} that the space of realizable tropical
curves is a polyhedral set in the moduli space of all tropical curves in $
\Sigma $ of fixed degree.

\def\message#intro{}%
  {\begin{center}\input{intro}\end{center}}

The main part of the final Section \ref{sec-criteria} then deals with the
special case when $X$ is a general plane in $3$-space, so that $ \Sigma $ is
the matroidal fan in the picture above. We then prove some necessary and some
sufficient non-algorithmic conditions for relative realizability, both in the
constant and the non-constant coefficient case. They are particularly strong in
the special situation of the picture when $C$ has exactly one bounded edge,
passing through the origin with homogeneous direction $ [0,1,1,0] $ and lengths
$ q,q' \in \RR_{\ge 0} $ on the two sides, and hence with unbounded ends
contained in two opposite cones of $ \Sigma $. In this case the necessary and
sufficient conditions agree, thus giving a complete non-algorithmic solution of
the relative realizability problem.

Let us quickly describe these conditions. The curve $C$ is described completely
by the two lengths $q$ and $q'$, together with the Newton polytopes $ P_3 $ and
$ P_1 $ of the projections of $C$ to the $ x_0,x_1,x_2 $-plane and the $
x_0,x_2,x_3 $-plane, respectively, as shown in the picture above on the right.
By a row of these polytopes we will mean a line parallel to the
$x_1$-$x_2$-side in the case of $ P_3 $, and to the $x_0$-$x_3$-side in the
case of $ P_1 $. We denote by $ \Delta $ the standard simplex of size $ d :=
\deg C $, indicated by the dots in the picture.

Now for any vertex $ \mu $ of one of the polytopes $ P_3 $ or $ P_1 $, consider
the lattice points of $ \Delta $ in the row of $ \mu $. Let $ r_\mu $ be the
number of these points contained in the polytope, and $ n_\mu $ the number of
these points not in the polytope, so that $ s_\mu := d + 1 - r_\mu - n_\mu $ is
the row number of $\mu$. Moreover, let $ l_\mu $ be the number of lattice
points in row $ n_\mu $ of the other of the two polytopes. The condition on $
\mu $ is then that $ l_\mu \ge r_\mu $. We show in Proposition
\ref{prop:posreal} that the recession fan of $C$ (\ie the corresponding fan
curve if $ q=q'=0 $) is relatively realizable if and only if this condition is
satisfied for all vertices of $ P_3 $ and $ P_1 $. It is easily checked that
this is the case for the curve in the picture above.

Moreover, if the recession fan of $C$ is relatively realizable, Proposition 
\ref{prop:posreal2} states that the (non-constant coefficient) curve $C$ is 
relatively realizable if and only if for any vertex $\mu$ of $P_3$ with 
row number $s_\mu \neq 0$ and any vertex $\nu$ of $P_1$ with $n_\nu \neq 0$, 
the lengths $q$ and $q'$ satisfy the condition
  \[ \frac{n_\mu}{s_\mu} \le \frac q{q'} \le \frac{s_\nu}{n_\nu}. \]
For example, for the two vertices $ \mu $ and $ \nu $ in the picture above we
have $ n_\mu = 1$ and $ s_\mu = n_\nu = s_\nu = 2 $, and so we know that
$C$ can only be relatively realizable if $ \frac 12 \le \frac q{q'} \le 1 $.
The other two relevant vertices of the polytopes give us the same lower and
upper bound for $\frac q{q'}$, hence we conclude that $C$ is relatively
realizable if and only if $ \frac 12 \le \frac q{q'} \le 1 $.

It may happen that there are vertices $\mu$ of $P_3$ and $\nu$ of $P_1$ such
that $\frac{n_\mu}{s_\mu} > \frac{s_\nu}{n_\nu}$ and so, the tropical curve $C$
is not relatively realizable for any $q,q' \in \mathbb R_{>0}$. One may ask if
this is only possible if the recession fan of $C$ is not realizable. However,
in Examples \ref{ex-rec} and \ref{ex-empty} we show that the solvability of the
length conditions does not correlate with the relative realizability of the
recession fan of $C$.

  \section{Preliminaries}\label{ch:tropgeo}

As a preparation for the main part of this paper, we need to introduce some
basic concepts and statements in tropical geometry to explain our setup. The
following section is concerned with polyhedral theory and the way we want to
tropicalize algebraic varieties. We start with purely tropical objects, and
then explain how to obtain them from algebraic ones.

\subsection{Tropical varieties}

\begin{Definition}[Weighted polyhedral complex]
 Let $n \in \mathbb N$, let $\Lambda$ be a lattice of rank $n$, and let $V =
 \Lambda \otimes_{\mathbb Z} \mathbb R$ be the corresponding real vector space.
 A \emph{(rational) weighted polyhedral complex} $(P,\omega)$ in $V$ consists
 of a rational, pure-dimensional polyhedral complex $P$ in $V$ of dimension $m$
 together with a weight function $\omega$ on the maximal cells of $P$, \ie a
 function $\omega: \{ \sigma \in P : \dim(\sigma) = m \} \to \mathbb N$. We
 will often just write $P$ instead of $(P,\omega)$ if the weight function is
 clear from the context.

 If $ U \subset V $ is a $ \Lambda $-rational linear subspace of $V$, the
 quotient $ V/U $ will always be equipped with the induced lattice. The vector
 space $ \RR^n $ will always be considered with the underlying lattice $ \ZZ^n
 $.
\end{Definition}

The following definitions follow \cite{Rau09}.

\begin{Definition}[Star of a polyhedral complex]\label{def:star}
 Let $(P,\omega)$ be a rational weighted polyhedral complex in $ V = \Lambda
 \otimes_\ZZ \RR $, and let $\tau \in P$ be a polyhedron. Let $V_\tau$ be the
 linear subspace generated by $\tau$ and consider the quotient map $q: V \to
 V / V_\tau$. For $\sigma \in P$, denote by $\overline \sigma$ the cone spanned
 by the image of $\sigma - \tau$ under $q$.
 Then the \emph{star of} $P$ at $\tau$ is defined to be the fan
 $$ \star_P(\tau) := \{ \overline \sigma : \sigma \in P, \tau < \sigma \}. $$
 By setting $\omega'(\overline \sigma) = \omega(\sigma)$, we get a weighted fan $(\star_P(\tau),\omega')$.
\end{Definition}

\begin{Definition}[Tropical varieties] \label{def-tropvar} \text{}
  \begin{enumerate}
  \item \label{def-tropvar:a}
    Let $(C,\omega)$ be a rational weighted one-dimensional polyhedral fan in
    $ V = \Lambda \otimes_\ZZ \RR $. We assume that $\{0\}$ is a cone of $C$.
    For any one-dimensional cone $\sigma \in C$, let $v_\sigma$ be a primitive
    generator of $ \sigma \cap \Lambda $. Then we say that $(C,\omega)$ is
    \emph{balanced} if
      $$ \sum_{\substack{\sigma \in C \\[0.5ex]
         \sigma \text{ one-dimensional}}}
         \omega(\sigma) \, v_\sigma =0. $$ 
  \item \label{def-tropvar:b}
    A \emph{tropical variety} $(P,\omega)$ is a weighted polyhedral complex
    such that $ \star_P(\tau) $ is balanced as in \ref{def-tropvar:a} for all
    codimension-1 cells $\tau$ of $P$. The \emph{dimension} of a tropical
    variety is the dimension of its polyhedral complex; a \emph{tropical curve}
    is a tropical variety of dimension one.

    We want to identify two tropical varieties $(P,\omega)$ and $(P',\omega')$
    if their polyhedral complexes have the same support, and there is a common
    refinement of $\Supp(P)$ such that the induced weights of $\omega$ and
    $\omega'$ coincide.
  \end{enumerate}
\end{Definition}

The following definition follows \cite[Definition 1.11]{Zie}.

\begin{Definition}[Recession fan of a tropical variety] \label{def-recession}
 Let $(P,\omega)$ be a tropical variety in $ V = \Lambda \otimes_\ZZ \RR $, and
 let $\sigma \in P$ be a polyhedron. The \emph{recession cone} of $\sigma$ is
 defined as 
   $$ \rec(\sigma) = \{ y \in V: x+ty \in \sigma \text{ for all }
      x \in \sigma \text{ and } t \geq 0 \}. $$
 We define the \emph{recession fan} of $P$ as
   $$ \Rec(P) = \{ \rec(\sigma): \sigma \in P\}. $$
 \cite[Lemma 1.4.10]{Rau09} ensures that there is a polyhedral structure $P'$
 on the support of $P$ such that $\Rec(P')$ is a fan. Since we identify two
 tropical varieties if the polyhedral structures have a common refinement
 respecting the weights, we assume from now on that the polyhedral structure
 $P$ is chosen in such a way that $\Rec(P)$ is a fan.

 Moreover, we define weights on the maximal cones of $\Rec(P)$ by setting 
   $$ \omega'(\sigma) = \sum_{\substack{\sigma' \in P \\[0.5ex]
      \rec(\sigma') = \sigma}}
      \omega(\sigma').$$
 Then the weighted fan $(\Rec(P),\omega')$ is a tropical variety
 \cite[Definition 1.4.11]{Rau09}.
\end{Definition}

\begin{Definition}[Degree of a tropical curve]\label{def:degree}
 Let $C$ be a tropical curve in $\mathbb R^n$. We define the \emph{degree} of
 $C$ to be the degree of its recession fan $\Rec(C)$ in the sense of
 \cite[Definition 2.8]{wir}, \ie the intersection product of $C$ (or $ \Rec(C)
 $) with a general tropical hyperplane.
\end{Definition}

\begin{Example} \label{ex-1}
  We consider the one-dimensional polyhedral complex $P$ in $ \RR^2 $ pictured
  below with weight $1$ on all maximal cells. Its star at $ \tau $ is the fan
  with the three edges $ \cone (1,0) $, $ \cone(0,1) $, and $ \cone(-1,-1) $,
  again with weight $1$ on all maximal cells. As the primitive generators of
  these cones add up to $0$ (and similarly for $ \tau' $), we see that $P$ is a
  tropical curve. It has degree $2$ and recession fan consisting of the four
  edges $ \cone(\pm 1,0) $ and $ \cone(0,\pm 1) $.

  \def\message#ex-1{}%
  {\begin{center}\input{ex-1}\end{center}}
\end{Example}

\subsection{Tropicalization}

Let $K$ be an algebraically closed field (of any characteristic).

\begin{Definition}[Puiseux series] \label{def-puiseux}
  By $ \Ct $ we denote the field of \emph{generalized Puiseux series} over $K$
  in the sense that its elements are formal series of the form $ \sum_{k \in
  \RR} a_k t^k $ in a formal variable $t$ such that $ \{ k \in \RR: a_k \neq 0
  \} $ is a well-ordered subset of $ \RR $.
\end{Definition}

There are several different but equivalent ways to define the tropicalization
of an algebraic variety over $ \Ct $. We choose the following one because we
will use it in our algorithm.

For $n \in \mathbb N$, we will often use the notation $\Ct[x]$ instead of
$\Ct[x_0,\ldots,x_n]$, and for $\nu \in \mathbb N^{n+1}$, we also write $x^\nu$
instead of $x_0^{\nu_0} \cdots x_n^{\nu_n}$. We denote by $ e_i $ for $
i=0,\dots,n $ the $i$-th unit vector in $ \RR^{n+1} $, and set $ \one :=
\sum_{i=0}^n e_i $.

\begin{Definition}[Tropical hypersurface]
 Let $f = \sum_\nu a_\nu x^\nu \in \Ct[x_0,\ldots,x_n]$ be a homogeneous
 polynomial. The \emph{tropical polynomial associated to} $f$ is defined as the
 map
   \[ \trop(f): \mathbb R^{n+1} \to \mathbb R, \;\;
      y \mapsto \min_\nu ( \val(a_\nu) + y \cdot \nu). \]
 The \emph{tropical hypersurface} $\Trop(f)$ of $f$ is the locus of
 non-differentiability of $\trop(f)$, \ie
   $$ \Trop(f) = \{ y \in \mathbb R^{n+1}/\RR \cdot \one:
      \text{ the minimum in } \trop(f)(y)
      \text{ is achieved at least twice} \}. $$
\end{Definition}

\begin{Definition}[Tropicalization]
  Let $ I \subset \Ct[x] $ be a homogeneous ideal. As a set, we define the
  \emph{tropicalization} of $I$ as
  $$ \Trop(I)
     = \bigcap_{\substack{f \in I \\ \text{$f$ homogeneous}}} \Trop(f)
     \quad \subset \RR^{n+1} / \RR \cdot \one. $$
 Using initial ideals, one can give $ \Trop(I) $ the structure of a polyhedral
 complex and put weights on its maximal cells in such a way that it becomes a
 tropical variety \cite[Theorem 3.2.5 and Definition 3.4.3]{MS}. If $I$ is the
 homogeneous ideal of a projective variety $ X \subset \PP^n_{\Ct} $, we write
 $ \Trop(I) $ also as $ \Trop(X) $. When drawing tropical varieties, we will
 always identify $ \RR^{n+1} / \RR \cdot \one $ with $ \RR^n $ by the map $
 [x_0,x_1,\dots,x_n] \mapsto (x_1-x_0,\dots,x_n-x_0) $.
\end{Definition}

\begin{Example} \label{ex-6}
  We want to compute $\Trop(I)$, where $I = (x_1^2+x_2^2-t^2 \, x_0^2) \subset
  \Ct[x_0,x_1,x_2]$. For $f = x_1^2+x_2^2-t^2 \, x_0^2 $ we have $\trop(f)(y) =
  \min(2y_0+2,2y_1,2y_2)$, and so

  \vspace{-1ex}

  \begin{sidepic}{ex-6}
    $$ \Trop(f) = \{ [0,y_1,y_1] : y_1 \leq 1 \} \cup
      \{ [0,y_1,1] : y_1 \geq 1 \} \cup \{ [0,1,y_2] : y_2 \geq 1\}. $$

   \vspace{0ex}

   Moreover, as $I = (f)$ is principal we have $\Trop(I) = \Trop(f)$:
   if $g = f \cdot h \in I$ is homogeneous, then $\trop(g)(y) = \trop(f)(y) +
   \trop(h)(y)$, so if the minimum in $\trop(f)(y)$ is achieved at least twice,
   then so is the minimum in $\trop(g)(y)$. Hence, we have $\Trop(f) \subset
   \Trop(g)$ and thus $\Trop(I) = \Trop(f)$. Using initial ideals, it can be
   seen that the weights on all cones of $ \Trop(I) $ are $2$.
 \end{sidepic}
\end{Example}

\begin{Example}[Tropicalization of linear spaces]
  Let $ X \subset \PP^n_{\Ct} $ be a linear space defined over $K$, so
  that its ideal $ L:=I(X) \subset \Ct[x_0,\dots,x_n] $ is generated by
  homogeneous linear polynomials over $K$. Assume that $X$ is not contained in
  any coordinate hyperplane, \ie that $L$ does not contain any monomial. Then
  the tropicalization of $X$ can be described in terms of matroid theory
  \cite{Oxl}: if $ M(L) $ is the matroid defined by $L$ then $ \Trop(X) $ is
  the Bergman fan of $ M(L) $ \cite[Theorem 4.1.11]{MS}, defined as follows.
\end{Example}

\begin{Construction}[Bergman fan of a matroid] \label{constr-bergman}
 Let $M$ be a matroid on the ground set $ \{0,\ldots,n\} $ for some $n \in
 \mathbb N$. For a flat $F$ of $M$, we define the associated vector 
 $v_F = \sum_{i\in F} [e_i] \in \mathbb R^{n+1} / \mathbb R \cdot \mathbf 1$.
 If $\mathcal F$ is a chain $F_0 \subset F_1 \subset \cdots \subset F_k$ of
 flats in $M$, let
 $\sigma_{\mathcal F}$ be the cone generated by all the associated vectors of flats in $\mathcal F$, \ie
 $\sigma_{\mathcal F} = \cone( v_{F_i} : i=0,\dots,k) $.

 These cones $ \sigma_{\mathcal F} $ form a fan in $ \RR^{n+1} / \RR \cdot
 \textbf 1 $, the so-called \emph{Bergman fan} $B(M)$ of $M$. Together with the
 constant weight function $1$, it is a tropical variety.
\end{Construction}

We are now ready to describe the main question studied in this paper. Given an
algebraic plane $ X \subset \PP^n_{\Ct} $ defined over $K$, so that its
tropicalization is a $2$-dimensional fan as above, we want to know whether a
given tropical curve in $ \Trop(X) $ can be realized as the tropicalization of
an algebraic curve in $X$. So roughly speaking, we are considering non-constant
coefficient tropical curves in a constant coefficient tropical plane. More
precisely, we will study the relative realizability of plane curves in the
following sense.

\begin{Definition}[Relative realizability of curves] \label{def-realize}
  Let $ X \subset \PP^n_{\Ct} $ be a plane defined over $K$ and not contained
  in any coordinate hyperplane, with homogeneous ideal $ L:=I(X) \subset \Ct[x]
  $ and tropicalization $ \Sigma := B(M(L)) \subset \RR^{n+1} / \RR \cdot \one
  $ as in Construction \ref{constr-bergman}. We say that a tropical curve $C$
  in $ \Sigma $ is \emph{(relatively) realizable} in $X$ (or $L$) if there is a
  homogeneous polynomial $ f \in \Ct[x] $ with $ \Trop (L+(f)) = C $ (including
  the weights). If $X$ is clear from the context, we also just say that $C$ is
  (relatively) realizable.
\end{Definition}

  \section{Relative Realizability} \label{sec-realize}

As above, let $K$ be any algebraically closed field, and let $ X \subset
\PP^n_{\Ct} $ be an algebraic plane defined over $K$ which is not contained in
any coordinate hyperplane, with homogeneous ideal $ L \subset
\Ct[x_0,\dots,x_n] $. In this section, we will explain how projections can be
used to decide whether or not a tropical curve in $ \Sigma = \Trop(X) =B(M(L))
$ is relatively realizable in the sense of Definition \ref{def-realize}. These
projections $ \Sigma \to \RR^3 / \RR \cdot \one $ are obtained using matroid
theory, and allow us to apply already known statements for tropical curves in
$\mathbb R^2$.

\subsection{Projecting to the plane}\label{sec:proj}

After introducing the necessary tropical and algebraic projections, we will see
that they commute with tropicalization, and that a tropical curve in $ \Sigma $
can be reconstructed from all its images under these projections. This will
help us to develop the algorithm mentioned above.

\begin{Definition}[Tropical projection] \label{def-proj-trop}
 For a basis $ A=(j_0,j_1,j_2) $ of $M(L)$, with $ 0 \le j_0,j_1,j_2 \le n $,
 we denote by $p^A: \mathbb R^{n+1} / \RR \cdot \one \to \mathbb R^A / \RR
 \cdot \one $ the projection onto the coordinates of $A$.
\end{Definition}

\begin{Remark} \label{rem-proj-trop}
  By construction, it is obvious that tropical projections commute with
  taking recession fans as in Definition \ref{def-recession}. Moreover, we have
  $ \deg (p_*^A D) = \deg D $ for any tropical fan curve $D$ in $ \Sigma $ by
  \cite[Lemma 3.9]{wir}. Hence
    \[ \deg p_*^A C = \deg \Rec p_*^A C = \deg p_*^A \Rec C = \deg \Rec C =
       \deg C \]
  by Definition \ref{def:degree}.
\end{Remark}

\begin{Definition}[Algebraic projection]\label{def:algproj}
 Let $A = (j_0,j_1,j_2) $ be a basis of $M(L)$. Moreover, let $R =
 \Ct[x_0,\dots,x_n]$ and $R^A = \Ct[x_{j_0},x_{j_1},x_{j_2}] $. Then the map
 $R^A \to R/L$ given by $x_i \mapsto x_i + L$ is an isomorphism
 since $A$ is a basis of $M(L)$. Geometrically, it describes the projection
 isomorphism from $X$ to the plane $ \PP^2_{\Ct} $ with homogeneous coordinates
 $ x_{j_0} $, $ x_{j_1} $, $ x_{j_2} $.

 The ideal $L$ is generated by polynomials in $K[x]$, so there are unique
 $c_{i,j} \in K$ such that the inverse map of this isomorphism is given by
   $$ \pi^A: R/L \to R^A, \;\;
      \overline{x_i} \mapsto c_{i,j_0}x_{j_0} + c_{i,j_1}x_{j_1} + c_{i,j_2}x_{j_2}. $$
 For a polynomial $ f \in R $, we denote the polynomial $ \pi^A(\overline f)
 \in R^A $ by $ f_A $. By construction, it has the property that the zero cycle
 of $f$ in $X$ maps to the zero cycle of $ f_A $ under the projection
 isomorphism $ X \to \PP^2_{\Ct} $ defined by $A$ as described above.
\end{Definition}

\begin{Theorem} \label{thm:gubler}
 As above, let $ X \subset \PP^n_{\Ct} $ be a plane defined over $K$, with
 defining ideal $L$, and let $A$ be a basis of $M(L)$. Then
 $$ \Trop(f_A) = p_*^A (\Trop (L+(f))) $$
 for every homogeneous polynomial $ f \in \Ct[x] $.
\end{Theorem}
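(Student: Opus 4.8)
The plan is to obtain this from the general compatibility of tropicalization with the projections induced by homomorphisms of tori, in the refined form that also tracks weights --- the projection (multiplicity) formula of Sturmfels and Tevelev, see \cite{MS}, in the version valid over the valued field $ \Ct $ due to Gubler.

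First I would reinterpret the algebraic projection of Definition \ref{def:algproj} torically. Let $ T \subset \PP^n_{\Ct} $ and $ T' \subset \PP^2_{\Ct} $ be the dense tori, that is, the complements of the coordinate hyperplanes. Then the projection isomorphism $ \rho\colon X \to \PP^2_{\Ct} $ induced by the ring map $ R^A \to R/L $ is nothing but the restriction to $ X \cap T $ of the monomial map $ T \to T' $, $ [x_0:\dots:x_n] \mapsto [x_{j_0}:x_{j_1}:x_{j_2}] $. The map induced on cocharacter lattices by this monomial map is the coordinate projection onto the coordinates of $ A $, so that its tropicalization is precisely the map $ p^A $ of Definition \ref{def-proj-trop}.

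Next I would use the property of $ f_A $ recorded at the end of Definition \ref{def:algproj}: the zero cycle of $ f $ on $ X $ is carried by $ \rho $ onto the zero cycle of $ f_A $ on $ \PP^2_{\Ct} $. Since $ \rho $ is an isomorphism, this identifies the curve cut out by $ f $ on $ X $ scheme-theoretically with the one cut out by $ f_A $ on $ \PP^2_{\Ct} $, and in particular $ \rho $ restricts to a degree-$1$ morphism between these two curves, carrying the torus part of the one onto a dense open subset of the torus part of the other.

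Finally I would invoke Gubler's theorem that tropicalization commutes with $ \rho $: the tropicalization of the closure of the image under $ \rho $ of the zero cycle of $ f $ on $ X $ equals $ p_*^A\bigl(\Trop(L+(f))\bigr) $ as weighted polyhedral complexes, where $ p_*^A $ is the push-forward of tropical cycles. By the previous step this closure is just the zero cycle of $ f_A $ on $ \PP^2_{\Ct} $, whose tropicalization is $ \Trop(f_A) $, and the theorem follows. The step I expect to be the main obstacle is the statement about weights: $ p_*^A $ weights a maximal cell $ \overline\sigma $ of the image by the sum of $ [\text{lattice index}]\cdot\omega(\sigma) $ over the maximal cells $ \sigma $ of $ \Trop(L+(f)) $ that map onto $ \overline\sigma $, and one must verify --- this being exactly the Sturmfels--Tevelev multiplicity formula, applied to the degree-$1$ morphism $ \rho $ --- that these numbers coincide with the weights $ \Trop(f_A) $ carries intrinsically as the tropical hypersurface of $ f_A $. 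One also has to take care of the fact that $ \rho $ is an isomorphism of $ X $ with $ \PP^2_{\Ct} $ only globally, and not after restriction to the tori, so that the interaction of the projections with the passage to $ T $ and $ T' $ and with closures is indeed as expected; this is the point at which one compares how tropicalization behaves at the coordinate hyperplanes on the two sides.
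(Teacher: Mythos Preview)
Your proposal is correct and follows essentially the same approach as the paper: both reduce the statement to Gubler's theorem that push-forwards commute with tropicalization, together with the observation (recorded in Definition \ref{def:algproj}) that the zero cycle of $f$ on $X$ maps to the zero cycle of $f_A$ under the projection determined by $A$. The paper's proof is a two-line citation of \cite[Theorem 13.17]{gubler}, whereas you have spelled out more of the underlying toric setup and the role of the Sturmfels--Tevelev multiplicity formula; this extra care is not needed here but is not wrong.
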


\begin{proof}
  As push-forwards commute with tropicalization by \cite[Theorem
  13.17]{gubler}, the statement follows since the zero cycle of $ L+(f) $ maps
  to the zero cycle of $ f_A $ under the projection determined by $A$.
\end{proof}

\begin{Theorem} \label{mainthm}
 Let $C \subset \Sigma $ be a tropical curve, and let $f \in \Ct[x]$ be a
 homogeneous polynomial. With notations as above, the following are equivalent:
 \begin{enumerate}
  \item $\Trop(L+(f)) = C$,
  \item for all bases $A$ of $M(L)$ we have $\Trop(f_A) = p_*^A C$.
 \end{enumerate}
\end{Theorem}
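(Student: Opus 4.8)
The plan is to prove the two implications separately, using Theorem~\ref{thm:gubler} as the bridge between the algebraic and tropical sides. The implication (a)$\Rightarrow$(b) is immediate: if $\Trop(L+(f)) = C$, then for every basis $A$ of $M(L)$ we get $\Trop(f_A) = p_*^A(\Trop(L+(f))) = p_*^A C$ directly from Theorem~\ref{thm:gubler}, with weights included since that theorem is an equality of tropical cycles. So the content is entirely in (b)$\Rightarrow$(a).

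For (b)$\Rightarrow$(a), I would argue that a tropical curve $D := \Trop(L+(f))$ in $\Sigma$ is determined by the collection of its push-forwards $p_*^A D$ over all bases $A$ of $M(L)$. First I would set $D := \Trop(L+(f))$, which by definition is a tropical curve contained in $\Sigma = B(M(L))$, and observe that by Theorem~\ref{thm:gubler} we have $p_*^A D = \Trop(f_A) = p_*^A C$ for every basis $A$. Thus it suffices to show that a tropical curve in $\Sigma$ is recovered from its images under all the coordinate projections $p^A$: then $p_*^A D = p_*^A C$ for all $A$ forces $D = C$. The key geometric input is that each $p^A$ restricted to $\Sigma$ is, on each maximal cone of $\Sigma$, an \emph{injective} linear map (because $A$ is a basis, the corresponding coordinates separate points on the two-dimensional fan $\Sigma$), and that every maximal cone of $\Sigma$ is mapped injectively by at least one such $p^A$. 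Concretely, a point $y \in \Sigma$ lies in some cone $\sigma_{\mathcal F}$ coming from a chain of flats; choosing a basis $A$ of $M(L)$ adapted to that chain makes $p^A|_{\sigma_{\mathcal F}}$ injective, so the support of $D$ is determined by the supports of all $p_*^A D$. For the weights, I would note that near a generic point of a maximal cell of $D$, exactly one projection is a local isomorphism onto its image with multiplicity one, so the weight on that cell equals the weight of $p_*^A D$ on the image cell, which is the weight of $p_*^A C$, which in turn equals the weight of $C$ on the corresponding cell --- and here one uses that $p^A$ does not decrease dimension on $\Sigma$ and maps distinct maximal cells of $D$ to distinct cells, so there is no weight accumulation (again, pick $A$ adapted to the relevant cone).

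The main obstacle I anticipate is the bookkeeping around the \emph{weights} and the possibility that, for a badly chosen basis $A$, the projection $p^A$ collapses several cells of $D$ onto the same image cell or drops the dimension of $\Sigma$ locally, so that $p_*^A D$ loses information. The resolution is the standard one: for any prescribed maximal cone $\sigma$ of $\Sigma$ there is a basis $A$ of $M(L)$ for which $p^A$ is injective in a neighborhood of $\sigma$ (this is precisely the matroid-theoretic statement that a basis of $M(L)$ can be chosen compatibly with a given chain of flats), and for that $A$ the push-forward $p_*^A$ preserves weights on the image of $\sigma$. Running over all maximal cones of $\Sigma$ then pins down both the support and the weights of $D$, and since the same analysis applied to $C$ gives the matching data, we conclude $D = C$. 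The rest is routine verification that the cell structures and weight functions agree under the identifications $\RR^{n+1}/\RR\cdot\one \cong \RR^n$ used throughout.
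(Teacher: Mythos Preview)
Your approach is correct and matches the paper's, which for (b)$\Rightarrow$(a) simply defers to \cite[Corollary~3.6 and Remark~3.8]{wir}; you are essentially reconstructing that argument. One phrasing to tighten: ``$p^A$ injective in a neighborhood of $\sigma$'' is not literally true near the boundary rays of $\sigma$ --- what you actually need (and what the matroid argument gives) is that $p^A$ is injective on $\sigma$ \emph{and} no other maximal cone of $\Sigma$ maps with full dimension into the interior of $p^A(\sigma)$, which holds because a chain of flats is recovered from its intersections with a compatible basis via closure; with that correction your weight comparison goes through as stated.
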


\begin{proof}
 Analogously to the proof of \cite[Corollary 3.6]{wir}, \cf \cite[Remark 3.8]{wir}, we can see that
 if $\Trop(f_A) = p_*^A C$ for all bases $A$ of $M(L)$, then $\Trop(L+(f)) = C$. 
 Using Theorem \ref{thm:gubler}, we also know that if $\Trop(L+(f)) = C$, then 
 $\Trop(f_A) = p_*^A C$ for all bases $A$ of $M(L)$.
\end{proof}

\subsection{Plane tropical curves}

We now want to collect some facts about tropical curves in $ \RR^3 / \RR \cdot
\one \cong \RR^2 $ which will help us with the computations. For simplicity, in
this section we will consider tropical curves in $\mathbb R^2$, \ie in the
non-homogeneous case. In our algorithm, where we use matroid theory, the
introduced concepts and facts will be transferred to the homogeneous setting,
identifying $\mathbb R^2$ with $\mathbb R^3 / \mathbb R \cdot \mathbf 1$ by
$(y_1,y_2) \mapsto [0,y_1,y_2]$.

Any tropical curve $ C \subset \RR^2 $ is realizable and thus the
tropicalization of a single polynomial, see for instance \cite[Proposition
2.4]{MI}. The combinatorial information about $C$ is encoded in the Newton
subdivision of this polynomial. We will therefore now present the concept of
Newton subdivisions. 

\begin{Definition}[Newton subdivisions] \text{}
  \begin{enumerate}
  \item Let $f = \sum_{|\nu| \leq d} a_\nu x^\nu \in \Ct[x_1,x_2]$ be a
    polynomial of degree $d$. We call $\Newt(f) = \conv( \nu: a_\nu \neq 0)$
    the \emph{Newton polytope} of $f$.

    If we consider the \emph{extended Newton polytope} $\conv( (\nu,y) : a_\nu
    \neq 0, y \geq \val(a_\nu) )$ in $\mathbb R^2 \times \mathbb R$, then the
    projection $\mathbb R^2 \times \mathbb R \to \mathbb R^2$ forgetting the
    last coordinate induces a subdivision of $\Newt(f)$ analogous to
    \cite[Example 7.2.2]{gelf}. This subdivision of $\Newt(f)$ is called the 
    \emph{Newton subdivision} of $f$.
  \item Let $C \subset \mathbb R^2$ be a tropical curve. The \emph{Newton
    polytope} of $C$, denoted by $\Newt(C)$, is defined as the Newton polytope
    of the recession fan $\Rec(C)$ of $C$, as defined for example in
    \cite[Definition 4.13]{wir}. It is the unique polytope such that $\Rec(C)$
    is its inner normal fan and there is $d \in \mathbb N$ such that $\Newt(C)$
    touches all three faces of the standard $d$-simplex. It has a \emph{Newton
    subdivision} dual to $C$.
  \end{enumerate}
\end{Definition}

\begin{Proposition} \label{prop:dualsubdivision}
 Let $f \in \Ct [x_1,x_2]$ be a polynomial. Then the tropicalization $\Trop(f)$ is dual to the Newton
 subdivision of $f$. Moreover, the weights on the maximal cells are given by the lattice
 lengths of the corresponding faces in the Newton subdivision of $f$.
\end{Proposition}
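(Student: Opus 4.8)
The plan is to deduce both statements from the geometry of the extended Newton polytope $\widetilde P := \conv\{(\nu,y) : a_\nu \neq 0,\ y \ge \val(a_\nu)\}\subset\RR^2\times\RR$, whose recession cone is $\RR_{\ge 0}\cdot e_3$, so that its bounded faces are precisely its lower faces and their images under the projection $\pi\colon\RR^2\times\RR\to\RR^2$ are exactly the cells of the Newton subdivision $\mathcal S$ of $f$. For $y\in\RR^2$ consider the linear functional $\ell_y(\nu,h)=\langle y,\nu\rangle+h$; it is bounded below on $\widetilde P$, hence attains its minimum on a bounded face $F_y$, and since the vertices of $\widetilde P$ lie among the points $p_\nu:=(\nu,\val(a_\nu))$ we get $\min_{\widetilde P}\ell_y=\min_\nu(\val(a_\nu)+\langle y,\nu\rangle)=\trop(f)(y)$, the minimum in $\trop(f)(y)$ being attained at exactly those $\nu$ with $p_\nu\in F_y$. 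Writing $Q_y:=\pi(F_y)$, which is a cell of $\mathcal S$ because $\pi$ restricts to an affine bijection on each lower face, we conclude that the minimum in $\trop(f)(y)$ is attained at least twice if and only if $\dim Q_y\ge 1$; that is, $\Trop(f)=\{\, y\in\RR^2 : \dim Q_y\ge 1 \,\}$.

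Next I would assemble the dual complex. For $Q\in\mathcal S$ let $F_Q$ be the lower face of $\widetilde P$ with $\pi(F_Q)=Q$, write its supporting affine function as $h=\langle a_Q,\nu\rangle+b_Q$, set $L_Q:=\Span(Q-Q)$, and put $\sigma_Q:=\overline{\{y : F_y=F_Q\}}$. A point $y$ lies in $\{y : F_y=F_Q\}$ precisely when $\ell_y$ is constant on $F_Q$ (equivalently $y\in -a_Q+L_Q^{\perp}$) and $\ell_y(p_\nu)$ is strictly larger for all $p_\nu\notin F_Q$; hence $\sigma_Q$ is a polyhedron of dimension $2-\dim Q$ with direction space $L_Q^{\perp}$, so $\sigma_Q$ is orthogonal to $Q$. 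Standard normal-fan bookkeeping then shows that the $\sigma_Q$ cover $\RR^2$ and form a polyhedral complex whose faces of $\sigma_Q$ are exactly the $\sigma_{Q'}$ with $Q'\supsetneq Q$, and that $Q\subseteq Q'$ in $\mathcal S$ if and only if $\sigma_Q\supseteq\sigma_{Q'}$. Combining this with the first paragraph, the $\sigma_Q$ with $\dim Q=2$, $\dim Q=1$, $\dim Q=0$ are respectively the vertices of $\Trop(f)$, the maximal edges of $\Trop(f)$, and the connected components of $\RR^2\setminus\Trop(f)$; this inclusion-reversing, dimension-complementary, orthogonal correspondence is exactly the statement that $\Trop(f)$ is dual to $\mathcal S$.

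For the weights, fix a maximal cell $e$ of $\Trop(f)$, let $y\in\relint(e)$, and let $e^\vee:=Q_y$ be the dual edge of $\mathcal S$, say with primitive direction $u$ and lattice length $\ell$. I would read off the weight of $\Trop(f)$ at $e$ from the initial form. By the previous paragraphs the monomials of $\inom_y(f)\in K[x_1,x_2]$ (here using that $\Ct$ has residue field $K$) are exactly those $x^\nu$ with $\nu$ a lattice point of $e^\vee$, so $\Newt(\inom_y(f))=e^\vee$ and, after multiplying by a suitable Laurent monomial, $\inom_y(f)=x^{\nu_0}\,g(x^u)$ for a univariate polynomial $g$ of degree $\ell$ with $g(0)\neq 0$. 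Since $K$ is algebraically closed, $g$ splits into $\ell$ linear factors counted with multiplicity, hence the minimal primes of the initial ideal $\inom_y\bigl((f)\bigr)$, weighted by their multiplicities, sum to $\ell$; by the definition of the weights on a tropical hypersurface in \cite[Definition 3.4.3]{MS} this sum is precisely the weight of $\Trop(f)$ along $e$, which therefore equals the lattice length of $e^\vee$.

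The step I expect to require the most care is the bookkeeping in the second paragraph: checking that the polyhedra $\sigma_Q$ genuinely glue into a polyhedral complex, that the correspondence $Q\leftrightarrow\sigma_Q$ is inclusion-reversing with complementary dimensions and orthogonal cells, and that the resulting complex is (a refinement-equivalent version of) the one \cite{MS} places on $\Trop(f)$. This is classical — it is the structure theorem for tropical hypersurfaces specialized to plane curves, \cf also \cite[Example 7.2.2]{gelf} — so I would either cite it or spell out the short argument above; the only point that is specific to our setting is that we work over generalized Puiseux series, where one only needs to record that the residue field of $\Ct$ is $K$ so that the splitting of $g$ in the third paragraph is available.
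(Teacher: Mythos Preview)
Your argument is correct and essentially reproduces, in a self-contained way, the content of the results the paper cites. The paper's own proof consists solely of two references: Proposition~3.1.6 in \cite{MS} for the duality statement and Proposition~3.4.6 in \cite{MS} for the identification of weights with lattice lengths. You instead unpack these citations: the first two paragraphs recover the duality via the lower-face/normal-fan description of the extended Newton polytope (exactly the mechanism behind \cite[Proposition~3.1.6]{MS}), and the third paragraph computes the weight by factoring the initial form over the residue field (the argument underlying \cite[Proposition~3.4.6]{MS}). What you gain is a proof that is self-contained and makes transparent where the hypothesis that $K$ is algebraically closed enters; what the paper gains is brevity, since this is a standard structure result. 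Your own caveat is accurate: the only place requiring genuine care is the polyhedral bookkeeping in the second paragraph, and your sketch of it is sound, though in a final write-up you would either cite the normal-fan duality directly or expand the verification that the $\sigma_Q$ form a polyhedral complex with the stated face relations.
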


\begin{proof}
The duality of the Newton subdivision of $f$ and its tropicalization is shown for instance in 
\cite[Proposition 3.1.6]{MS}. The equality of the weights and the lattice lengths can be found in
\cite[Proposition 3.4.6]{MS}.
\end{proof}

\begin{Remark} \label{rem:condsub}
  In the proof of \cite[Proposition 3.1.6]{MS}, one can see that if $\tau =
  \conv(\nu,\eta)$ is a $1$-dimensional face in the Newton subdivision of $f$,
  then the cell of $\Trop(f)$ dual to $\tau$ consists of all $y \in \mathbb
  R^2$ such that
    $$ \val(a_\nu) + y \cdot \nu = \val(a_\eta) + y \cdot \eta
       \leq \val(a_\mu) + y \cdot \mu $$
  for all $\mu \in \Newt(f) \cap \mathbb N^2$, with the convention that $
  \val(a_\mu) = \infty $ if $ a_\mu = 0 $ (so that we do not get a condition
  from such a lattice point).

  Hence, if $y$ is the vertex of $\Trop(f)$ corresponding to a $2$-dimensional
  polytope $Q$ in the Newton subdivision, we have
    $$ \val(a_\nu) + y \cdot \nu = \val(a_\mu) + y \cdot \mu$$
  for all vertices $\mu,\nu$ of $Q$, and
    $$\val (a_\nu) + y \cdot \nu \leq \val(a_\mu) + y \cdot \mu$$
  for all vertices $\nu$ of $Q$ and $\mu \in \Newt(f) \cap \mathbb N^2$. Note
  that these equations suffice to determine $y$ uniquely.
\end{Remark}

\begin{Example} \label{ex:newton}
 We consider the polynomial $f = t^2 + tx_1 -2t x_2 + tx_1^2 - 2tx_2^2 + x_1x_2$ in $\Ct[x_1,x_2]$. 
 Its Newton subdivision and tropicalization are as follows.
 \begin{center}
  \begin{tikzpicture}
   \draw (0,-0.5) -- (1,-0.5) -- (1,0.5) -- (0,0.5) -- (0,-0.5);
   \draw (1,-0.5) -- (2,-0.5) -- (0,1.5) -- (0,-0.5);
   \filldraw (0,-0.5) circle (2pt);
   \filldraw (1,-0.5) circle (2pt);
   \filldraw (2,-0.5) circle (2pt);
   \filldraw (0,0.5) circle (2pt);
   \filldraw (0,1.5) circle (2pt);
   \filldraw (1,0.5) circle (2pt);   

   \draw (5,2) -- (5,1) -- (4,0);
   \draw (5,1) -- (7,1);
   \draw (6,2) -- (6,0) -- (7,0);
   \draw (6,0) -- (5,-1);
   \draw (5,1) node[anchor = east] {$(0,1)$};
   \draw (6,1) node[anchor = south west] {$(1,1)$};
   \draw (5.85,0) node[anchor = north west] {$(1,0)$}; 
   \filldraw (5,1) circle (1pt);
   \filldraw (6,1) circle (1pt);
   \filldraw (6,0) circle (1pt);   
  \end{tikzpicture}
 \end{center}
 Let us now consider the polynomial $g = t^4 + t^2x_1 -2t^2 x_2 + t^2x_1^2 - 2t^2x_2^2 + x_1x_2$.
 The Newton subdivisions of $f$ and $g$ are equal, but the tropicalization of $g$ has
 vertices $(2,0),(0,2)$ and $(2,2)$. Hence, we see that the Newton subdivision of a polynomial $f$ does
 not determine its tropicalization, but only its combinatorial type.

 If the Newton polytope is $2$-dimensional, we will therefore now mark each
 $2$-dimensional cell of the Newton subdivision with its corresponding vertex
 in $ \Trop(f) $ in order to fix the tropicalization.
\end{Example}

\begin{Definition}[Marked subdivision]
 Let $Q$ be a $2$-dimensional lattice polytope in $\mathbb R^2$ and $A = Q \cap
 \mathbb Z^2$. A \emph{marked subdivision} of $Q$ is a family $\{(A_i,y_i): i
 \in I \}$ such that
 \begin{enumerate}
  \item $ y_i \in \RR^2 $ for all $ i \in I $,
  \item each $A_i$ is a subset of $A$ such that $A_i$ contains exactly the vertices of $Q_i = \conv(A_i)$,
  \item $\{ Q_i: i \in I\}$ forms a subdivision of $Q$.
 \end{enumerate}
\end{Definition}

\begin{Definition}[Marked Newton subdivisions] \label{def:newtonsubdiv} ~
\begin{enumerate}
  \item \label{def:newtonsubdiv:a}
   Let $f = \sum_{|\nu| \leq d} a_\nu x^\nu \in \Ct[x_1,x_2]$ be a
   polynomial of degree $d$ such that $\Newt(f)$ is $2$-dimensional,
   and let $\{Q_i: i \in I\}$ be the Newton subdivision of $f$. For any $i \in
   I$, let $A_i \subset \mathbb Z^2$ be the vertices of $Q_i$ and $y_i$ the
   unique vector in $\mathbb R^2$ such that $\val(a_\mu) + \mu \cdot y_i =
   \val(a_\nu) + \nu \cdot y_i$ for all $\mu, \nu \in A_i$, which exists by
   Remark \ref{rem:condsub}. The point $y_i$ is then the vertex of $\Trop(f)$
   dual to $Q_i$. The marked subdivision $ \Subdiv(f) := \{(A_i,y_i) : i \in
   I\}$ will be called the \emph{marked Newton subdivision} of $f$.
 \item \label{def:newtonsubdiv:b}
   Let $C \subset \mathbb R^2$ be a tropical curve which is not a classical
   line, and let $ \{ Q_i : i \in I \} $ be its Newton subdivision. The
   \emph{marked Newton subdivision} $\Subdiv(C)$ of $C$ is defined as the
   subdivision $\{(A_i,y_i): i \in I\}$ of $\Newt(C)$ such that $ A_i $ is the
   set of vertices of $ Q_i $, and $ y_i $ the vertex of $C$ dual to $ Q_i $
   for all $ i \in I $.
\end{enumerate}
\end{Definition}

\begin{sidepic}{ex-2}
  \begin{Example}
 The marked Newton subdivision of the polynomial $f$ as in Example \ref{ex:newton} consists of the three
 pairs 
 \begin{itemize}
  \item $(\{(0,1),(1,1),(0,2)\},(1,0))$,
  \item $(\{(0,0),(1,0),(1,1),(0,1)\},(1,1))$, and 
  \item $(\{(1,0),(1,1),(2,0)\},(0,1))$.
 \end{itemize}
\end{Example}
\end{sidepic}

\vspace{1ex}

As expected, the following lemma shows that a tropical curve is completely
described by its marked Newton subdivision. We will use this fact in our
algorithm by comparing marked Newton subdivisions instead of tropical curves.

\begin{Lemma} \label{lemma:markednewton}
 Let $C$ be a tropical curve which is not a classical line, and let $f$ be a polynomial in 
 $\Ct[x_1,x_2]$ such that $f$ is not divisible by a monomial and $\Newt(f)$ is two-dimensional. 
 Then we have $\Trop(f) = C$ if and only if $\Subdiv(f)= \Subdiv(C)$. 
\end{Lemma}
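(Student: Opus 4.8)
The plan is to prove the two implications separately, using Proposition \ref{prop:dualsubdivision} as the main bridge between the algebraic and combinatorial data. The forward direction is essentially immediate: if $\Trop(f) = C$, then by definition of $\Subdiv(C)$ in Definition \ref{def:newtonsubdiv}\ref{def:newtonsubdiv:b}, the underlying Newton subdivision of $C$ is the Newton subdivision of $f$ (both being dual to the same tropical curve via Proposition \ref{prop:dualsubdivision}), the vertex sets $A_i$ agree since they are just the vertices of the cells $Q_i$, and the markings $y_i$ agree because in both cases $y_i$ is the vertex of $C = \Trop(f)$ dual to $Q_i$. One small point to check is that $\Newt(C)$ equals $\Newt(f)$ as polytopes, not just up to translation: since $f$ is not divisible by a monomial, $\Newt(f)$ touches all three coordinate hyperplanes, and $\Newt(C)$ is by its defining property the unique such polytope with $\Rec(C)$ as inner normal fan, so they coincide.

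For the converse, suppose $\Subdiv(f) = \Subdiv(C)$. I want to conclude $\Trop(f) = C$ as weighted tropical curves. First, the underlying (unmarked) Newton subdivisions agree, so by Proposition \ref{prop:dualsubdivision} the tropical curves $\Trop(f)$ and $C$ are combinatorially dual to the same subdivision; in particular they have the same combinatorial type, the same recession fan, and the same edge weights (the latter by the lattice-length statement in Proposition \ref{prop:dualsubdivision}, together with the fact that $C$'s weights are also read off from its dual subdivision). What remains is to pin down the actual embedded position of the curve in $\RR^2$, and this is exactly what the markings do: each vertex $y_i$ of $\Trop(f)$ is determined as the unique solution of the equations $\val(a_\mu) + \mu \cdot y_i = \val(a_\nu) + \nu \cdot y_i$ for $\mu, \nu \in A_i$ (Remark \ref{rem:condsub} guarantees uniqueness), and by hypothesis this coincides with the vertex of $C$ dual to $Q_i$. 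Since a tropical curve that is not a line has at least one vertex, and the positions of all vertices are determined by the markings while the directions and lengths of the edges connecting them are determined by the combinatorial type, the two curves coincide.

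The step I expect to require the most care is the bookkeeping around why the marked Newton subdivision genuinely determines the \emph{embedded} tropical curve and not merely its combinatorial type — i.e., making precise that once all vertices are fixed in $\RR^2$ and the combinatorial type is fixed, the curve is determined. Concretely, one argues by connectivity of the dual graph: pick any vertex, its position is given by its marking; then every adjacent bounded edge has a fixed primitive direction (dual to the corresponding edge of the subdivision) and a fixed lattice length (again from the subdivision), so the position of the neighboring vertex is forced; and the unbounded rays emanate in the directions prescribed by the boundary edges of $\Newt(f)$. One must also handle the degenerate-sounding case where $\Newt(f)$ is a segment rather than two-dimensional — but this is excluded by hypothesis, and the exclusion of "$C$ a classical line" on the tropical side is the matching assumption that makes $\Subdiv(C)$ well-defined. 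A final detail is to confirm that the condition "$f$ not divisible by a monomial" is what prevents a mismatch between $\Newt(f)$ and $\Newt(C)$ and ensures $\Trop(f)$ has the expected unbounded ends; this should be remarked on explicitly.
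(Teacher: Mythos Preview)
Your proposal is correct and follows essentially the same approach as the paper: both directions rely on Proposition \ref{prop:dualsubdivision} for the duality between the Newton subdivision and the tropical curve, use the non-divisibility by a monomial to match $\Newt(f)$ with $\Newt(C)$, and use the markings to identify the vertices. Your write-up is in fact more careful than the paper's in spelling out why ``same combinatorial type plus same vertices'' forces the embedded curves to coincide.
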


\begin{proof}
 Assume that $\Subdiv(C)$ is the marked Newton subdivision of $f$.
 We have already seen in Proposition \ref{prop:dualsubdivision} that the Newton subdivision of 
 $f$ is dual to its tropicalization. Moreover, by definition, $C$ is dual to
 its marked Newton subdivision, hence $C$ and $\Trop(f)$ are of the same combinatorial type. 
 Additionally they have the same vertices, hence they coincide.

 Conversely, assume that $\Trop(f) = C$. As $f$ is not divisible by a monomial,
 we know by \cite[Lemma 4.14]{wir} that $\Newt(f) = \Newt(C)$. Moreover, since
 $\Trop(f)$ is dual to the Newton subdivision of $f$ we know by definition that
 the subdivisions coincide. In addition, the markings are exactly the vertices
 of $C$ and $\Trop(f)$, and thus are the same. So we have $\Subdiv(f) =
 \Subdiv(C)$.
\end{proof}

\begin{Remark} \label{rem-trop-deg}
 Using Lemma \ref{lemma:markednewton}, we see that tropicalization preserves degree: 
 We defined the degree of a tropical curve $C$ as the degree of its recession fan, \cf
 Definition \ref{def:degree}. Moreover, the Newton polytope of $C$ is defined as the Newton polytope
 of its recession fan, \cf Definition \ref{def:newtonsubdiv}. In \cite[Lemma 4.14]{wir}, it is shown
 that $\Newt(C)$ meets all three faces of $\conv((0,0),(d,0),(0,d))$, where $d
 = \deg(C)$,
 and since $\Newt(C) = \Newt(f)$, we know in particular that $\deg(f) = d$. 
\end{Remark}

\subsection{Computing Realizability}

In this section we will present an algorithm to decide whether or not a given
tropical curve in $ \Sigma = B(M(L))$ is relatively realizable in $L$, where as
above $X$ is a plane in $ \PP^n_{\Ct}$ and $L = I(X)$ can be generated by
polynomials in $K[x]$.

When working with the computer, we can only consider tropical curves with
vertices in $\mathbb Q^n$. The following proposition shows that for the
implementation of the relative realization problem we can in fact restrict to
the case of tropical curves with integer vertices.

\begin{Proposition} \label{prop-intvert}
  Fix $ m \in \QQ_{>0} $. Let $ C \subset \Sigma $ be a tropical curve which is
  not a classical line, and let $ C_m $ be the tropical curve obtained from $C$
  by rescaling $ y \mapsto my $ (preserving the weights). Moreover, let $ f \in
  \Ct[x] $ be a homogeneous polynomial without monomial factors, and let $ f_m
  $ be its image under the ring homomorphism $ \Ct[x] \to \Ct[x] $ sending $t$
  to $ t^m $.

  Then $ f_m $ realizes $ C_m $ if and only if $f$ realizes $C$.
\end{Proposition}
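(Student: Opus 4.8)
The plan is to use the characterization of relative realizability through projections (Theorem \ref{mainthm}) together with the description of plane tropical curves via their marked Newton subdivisions (Lemma \ref{lemma:markednewton}), and to show that the rescaling $y \mapsto my$ on the tropical side corresponds exactly to the substitution $t \mapsto t^m$ on the algebraic side, compatibly with all projections. First I would reduce to the planar case: by Theorem \ref{mainthm}, $f$ realizes $C$ if and only if $\Trop(f_A) = p^A_* C$ for every basis $A$ of $M(L)$, and similarly $f_m$ realizes $C_m$ if and only if $\Trop((f_m)_A) = p^A_* C_m$ for every $A$. Since the algebraic projection $\pi^A$ has coefficients in $K$ (independent of $t$), applying the ring homomorphism $t \mapsto t^m$ commutes with forming $f_A$, so $(f_m)_A = (f_A)_m$. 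Likewise, the tropical projection $p^A$ is a linear coordinate projection, so it commutes with the rescaling $y \mapsto my$; hence $p^A_* C_m = (p^A_* C)_m$ in the appropriate sense (rescaling of the planar curve $p^A_* C$, with the same weights, because $\deg p^A_* C = \deg C$ by Remark \ref{rem-proj-trop} and rescaling preserves weights and degree). Thus the whole statement reduces to the planar claim: for $g \in \Ct[x_1,x_2]$ without monomial factors with $\Newt(g)$ two-dimensional, and a plane tropical curve $D$ which is not a classical line, $g_m$ realizes $D_m$ if and only if $g$ realizes $D$. (One should also dispose of the degenerate cases where some projection is a classical line or has lower-dimensional Newton polytope, but since $C$ is not a classical line, for a suitable choice of basis the projection will be genuinely two-dimensional, and the argument of Theorem \ref{mainthm} only needs the projections to all match.)

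Next I would prove the planar claim using marked Newton subdivisions. Write $g = \sum_\nu a_\nu x^\nu$; then $g_m = \sum_\nu a_\nu^{(m)} x^\nu$ where $a_\nu^{(m)}$ is the image of $a_\nu$ under $t \mapsto t^m$, so that $\val(a_\nu^{(m)}) = m \cdot \val(a_\nu)$ and, crucially, $a_\nu^{(m)} = 0$ if and only if $a_\nu = 0$. Therefore $\Newt(g_m) = \Newt(g)$, and the extended Newton polytope of $g_m$ is obtained from that of $g$ by scaling the last (valuation) coordinate by $m$; this scaling does not change the lower faces of the extended polytope as subsets of $\Newt(g)$, so the (unmarked) Newton subdivisions of $g$ and $g_m$ coincide. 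For the markings, recall from Definition \ref{def:newtonsubdiv}\ref{def:newtonsubdiv:a} that the vertex $y_i$ dual to a cell $Q_i$ with vertex set $A_i$ is determined by $\val(a_\mu) + \mu \cdot y_i = \val(a_\nu) + \nu \cdot y_i$ for all $\mu,\nu \in A_i$ (these equations determining $y_i$ uniquely by Remark \ref{rem:condsub}). Replacing each $\val(a_\mu)$ by $m\cdot\val(a_\mu)$ scales the solution by $m$, so the vertex of $\Trop(g_m)$ dual to $Q_i$ is $m y_i$. Hence $\Subdiv(g_m)$ is obtained from $\Subdiv(g)$ by rescaling every marking by $m$, \ie $\Subdiv(g_m) = \Subdiv(D_m)$ if and only if $\Subdiv(g) = \Subdiv(D)$, which by Lemma \ref{lemma:markednewton} is exactly the equivalence "$g_m$ realizes $D_m$ iff $g$ realizes $D$". (I would also check that $g_m$ has no monomial factor: a monomial factor of $g_m$ would, by comparing supports and the fact that $t\mapsto t^m$ is injective on coefficients, force a monomial factor of $g$.)

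Assembling the pieces: $f$ realizes $C$ $\iff$ $\Trop(f_A) = p^A_* C$ for all $A$ $\iff$ $\Subdiv(f_A) = \Subdiv(p^A_* C)$ for all $A$ (for those $A$ where the planar picture is two-dimensional; the classical-line or segment cases are handled separately by a direct valuation computation) $\iff$ $\Subdiv((f_A)_m) = \Subdiv((p^A_* C)_m)$ for all $A$ $\iff$ $\Trop((f_m)_A) = p^A_* C_m$ for all $A$ $\iff$ $f_m$ realizes $C_m$. The main obstacle, I expect, is not any single computation but rather the careful bookkeeping of the degenerate projections — ensuring that the reduction to Lemma \ref{lemma:markednewton} is legitimate for every basis $A$, including those for which $p^A_* C$ is a classical line or has a lower-dimensional Newton polytope (where $\Subdiv$ is not defined) — and verifying that the rescaling $C \mapsto C_m$ genuinely preserves the weight data that enters the definition of realizability, so that the equivalence is with weights, as required.
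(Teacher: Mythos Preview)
Your proposal is correct and follows essentially the same approach as the paper: reduce to the planar case via Theorem \ref{mainthm}, then observe that the substitution $t\mapsto t^m$ scales all valuations by $m$, so the marked Newton subdivision of $f_m$ has the same cells as that of $f$ with each marking $y_i$ replaced by $my_i$, which is exactly the marked Newton subdivision of $C_m$. The paper's proof is considerably more terse and does not spell out the commutation of the projections with $t\mapsto t^m$ and with rescaling, nor does it discuss the classical-line case; your extra bookkeeping is reasonable but not strictly needed beyond what the paper already sweeps under Theorem \ref{mainthm}.
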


\begin{proof}
  By Theorem \ref{mainthm} it suffices to prove the statement for
  curves in $ \RR^3 / \RR \cdot \one \cong \RR^2 $, where we can use marked
  Newton subdivisions for the comparison. But if $ \{ (A_i,y_i) : i \in I \} $
  is the marked Newton subdivision of $f$ it follows from Definition
  \ref{def:newtonsubdiv} \ref{def:newtonsubdiv:a} that the marked Newton
  subdivision of $ f_m $ is just $ \{ (A_i, my_i) : i \in I \} $. As this
  corresponds to the curve obtained from $C$ by rescaling by a factor of $m$,
  the result follows.
\end{proof}

As already mentioned, it is the aim of this section to present an algorithm
able to decide whether or not a given tropical curve in $ \Sigma $ is
relatively realizable. To do so, we will use the idea to compare marked Newton
subdivisions. The following proposition shows a way to check if a given marked
subdivision is the marked Newton subdivision of a polynomial. As above, we set
$\val(0) = \infty$.

\begin{Lemma} \label{lem-subdiv}
  Let $S = \{(A_i,y_i): i \in I\}$ be the marked Newton subdivision of a
  tropical curve $C$ in $\mathbb R^2$ which is not a classical line,
  and let $f = \sum_\nu a_\nu x^\nu \in \Ct [x_1,x_2]$ be a polynomial with
  $\Newt(f) = \Newt(C)$.
  Then $ \Trop(f) = C $ if and only if the following conditions hold:
  \begin{enumerate}
  \item \label{lem-subdiv:a}
    For all $i \in I$ and all $\nu, \eta \in A_i$ we have
    $ \val(a_\nu) + \nu \cdot y_i = \val(a_\eta) + \eta \cdot y_i$, and
  \item \label{lem-subdiv:b}
    for any $\nu \in \Newt(f) \cap \mathbb N^2$ such that $\nu \notin A_i$ for
    all $i \in I$, choose $j \in I$ such that $\nu \in \conv(A_j)$ and $\eta
    \in A_j$ arbitrary, and require that $\val(a_\nu) + \nu\cdot y_j \geq
    \val(a_\eta) + \eta\cdot y_j$.
  \end{enumerate}
\end{Lemma}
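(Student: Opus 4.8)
The plan is to reduce the statement to the characterization of $\Trop(f)$ via marked Newton subdivisions in Lemma \ref{lemma:markednewton}, and then observe that the two listed conditions are exactly what it takes to guarantee $\Subdiv(f) = \Subdiv(C) = S$ given the hypothesis $\Newt(f) = \Newt(C)$. More precisely, I would first note that since $S$ is the marked Newton subdivision of a tropical curve $C$ which is not a classical line, and $f$ is automatically not divisible by a monomial because $\Newt(f) = \Newt(C)$ touches all three faces of a standard simplex (so in particular meets all coordinate hyperplanes), Lemma \ref{lemma:markednewton} applies: $\Trop(f) = C$ if and only if $\Subdiv(f) = S$.

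For the ``only if'' direction, assume $\Trop(f) = C$. Then $\Subdiv(f) = S$, so the underlying subdivisions $\{Q_i\}$ coincide and the markings agree, i.e. $y_i$ is the vertex of $\Trop(f)$ dual to $Q_i = \conv(A_i)$. Condition \ref{lem-subdiv:a} is then immediate from Definition \ref{def:newtonsubdiv}\ref{def:newtonsubdiv:a}, which says precisely that $y_i$ satisfies $\val(a_\mu) + \mu\cdot y_i = \val(a_\nu) + \nu\cdot y_i$ for all $\mu,\nu \in A_i$. Condition \ref{lem-subdiv:b} follows from Remark \ref{rem:condsub}: for a lattice point $\nu$ lying in the interior of some $Q_j$ (hence not a vertex of any cell), the inequality $\val(a_\nu) + \nu\cdot y_j \ge \val(a_\eta) + \eta\cdot y_j$ for $\eta \in A_j$ is exactly the condition recorded there characterizing the vertex $y_j$ of $\Trop(f)$, and it is independent of the choice of $\eta \in A_j$ because of \ref{lem-subdiv:a}.

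For the ``if'' direction, assume \ref{lem-subdiv:a} and \ref{lem-subdiv:b} hold. Here the task is to show that the marked Newton subdivision of $f$ equals $S$. Since $\Newt(f) = \Newt(C)$, the ambient polytope is the same, so I need to check that the Newton subdivision of $f$ is $\{Q_i : i \in I\}$ and that the marking of the cell $Q_i$ is $y_i$. The natural approach is to use Proposition \ref{prop:dualsubdivision}: the Newton subdivision of $f$ is the one induced by the lower faces of the extended Newton polytope, equivalently it is dual to $\Trop(f)$. I would argue that the piecewise-linear function $y \mapsto \min_\nu(\val(a_\nu) + y\cdot\nu)$ agrees on the region $R_i := \{ y : \val(a_\nu) + y\cdot\nu \le \val(a_\mu) + y\cdot\mu \text{ for all lattice points } \mu, \text{ some/any } \nu\in A_i \}$ — and conditions \ref{lem-subdiv:a}, \ref{lem-subdiv:b} say exactly that $y_i \in R_i$ and that the linear form attaining the minimum near $y_i$ is supported on $A_i$. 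One then checks that the regions $R_i$ cover $\RR^2$, that $R_i$ is full-dimensional with $y_i$ in its interior, and that the dual cell to $R_i$ in the Newton subdivision is $\conv(A_i) = Q_i$; this last point uses that $\{Q_i\}$ is a subdivision of $\Newt(f) = \Newt(C)$, so that once we know each $Q_i$ appears as a dual cell there is no room for any finer subdivision. Thus $\Subdiv(f) = \{(A_i,y_i)\} = S$, and Lemma \ref{lemma:markednewton} gives $\Trop(f) = C$.

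The main obstacle is the ``if'' direction: one must rule out the possibility that $f$ has a \emph{finer} Newton subdivision than $S$, i.e. that some cell $Q_i$ is actually subdivided further by the valuations of $f$, or that a lattice point $\nu$ treated as ``interior'' in \ref{lem-subdiv:b} is in fact a vertex of the true Newton subdivision of $f$. The key input that prevents this is precisely that $S$ is already a genuine subdivision of the full polytope $\Newt(f) = \Newt(C)$, so its cells $Q_i$ are inclusion-maximal among lattice polytopes refining it; combined with \ref{lem-subdiv:a} (forcing the $A_i$-monomials to be simultaneously minimal at $y_i$) and \ref{lem-subdiv:b} (forcing all other lattice points to be non-minimal, hence not contributing a vertex there), the Newton subdivision of $f$ can neither be coarser nor finer than $S$. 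Once this combinatorial rigidity is pinned down, the markings are determined uniquely by \ref{lem-subdiv:a} (again via Remark \ref{rem:condsub}), and the proof closes.
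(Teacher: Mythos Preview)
Your ``only if'' direction is essentially the paper's argument: Lemma \ref{lemma:markednewton} gives $\Subdiv(f)=S$, and Remark \ref{rem:condsub} then yields \ref{lem-subdiv:a} and \ref{lem-subdiv:b}.

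For the ``if'' direction the paper takes a different and somewhat slicker route. Instead of analyzing the regions $R_i$ directly, it uses that a plane tropical curve is always realizable: pick any $g=\sum_\nu b_\nu x^\nu$ with $\Subdiv(g)=S$. Condition \ref{lem-subdiv:a}, together with connectedness of the subdivision, forces $\val(a_\nu)=\val(b_\nu)+c$ for a single constant $c$ on all of $\bigcup_i A_i$; hence the ``partial'' extended Newton polytope $Q=\conv\{(\nu,\lambda):\nu\in\bigcup_i A_i,\ \lambda\ge\val(a_\nu)\}$ already induces the subdivision $\{Q_i\}$. Condition \ref{lem-subdiv:b} then shows that the remaining lattice points $(\nu,\val(a_\nu))$ lie in $Q$, so $Q$ is in fact the full extended Newton polytope of $f$. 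The markings are recovered from \ref{lem-subdiv:a}.

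Your direct approach can be made to work, but the step you label ``one then checks'' is where the real content sits. Conditions \ref{lem-subdiv:a} and \ref{lem-subdiv:b} alone do not obviously prevent the subdivision of $f$ from being \emph{coarser} than $S$: for $\nu\in A_j$ with $j\neq i$, neither condition directly gives $\val(a_\nu)+\nu\cdot y_i\ge \val(a_\eta)+\eta\cdot y_i$. What guarantees this is precisely that $S$ is a regular subdivision with the $y_i$ as dual vertices --- equivalently, that $C$ is an honest tropical curve --- and the paper's comparison with $g$ is a clean way to import that fact. Your argument would need to spell out the same convexity (e.g.\ via the duality between adjacent cells $Q_i,Q_j$ and the edge of $C$ joining $y_i,y_j$), which is doable but more laborious than invoking $g$.
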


\begin{proof}
  If $ \Trop(f)=C $ then we also have $ \Subdiv(f) = \Subdiv(C) $ by Lemma
  \ref{lemma:markednewton}, and hence the conditions \ref{lem-subdiv:a} and
  \ref{lem-subdiv:b} hold by Remark \ref{rem:condsub}.

  Conversely, assume that the conditions of the lemma hold. Let us consider the
  polyhedron
    $$ Q = \conv\left( (\nu, \lambda): \nu \in \bigcup_{i \in I} \, A_i,
       \lambda \geq \val(a_\nu) \right) \quad \subset \RR^2 \times \RR. $$
  We first show that projecting $Q$ to $\mathbb R^2$ induces the Newton
  subdivision of $C$: We know that $C$ is realizable since it is a plane
  tropical curve, hence there is a polynomial $ g = \sum_\nu b_\nu x^\nu \in
  \Ct[x_1,x_2]$ such that $\Subdiv(g) = S $. By Remark
  \ref{rem:condsub}, we know that for all $i \in I$ and $\nu, \eta \in A_i$, we
  have $\val(b_\nu) + y_i \cdot \nu = \val(b_\eta) + y_i \cdot \eta$. We
  assumed that $f$ fulfills the above conditions, so by \ref{lem-subdiv:a}
  there is a constant $c \in \mathbb R$ with $\val(a_\nu) = \val(b_\nu) + c$
  for all $\nu \in \bigcup_{i\in I} A_i$. Hence, the induced subdivision of $Q$
  equals the Newton subdivision of $g$ and thus the Newton subdivision of $C$.

  We now prove that $Q$ is the extended Newton polytope of $f$: Due to the form
  of $Q$ and using \ref{lem-subdiv:a}, we know that $(\nu,\lambda) \in Q$ if
  and only if $\nu \in \Newt(f)$ and $\lambda \geq \val(a_\mu) + y_i \cdot
  (\mu-\nu)$, where we have $i \in I$ with $\nu \in \conv(A_i)$ and $\mu \in
  A_i$ arbitrary. So, for $\nu \in \mathbb N^2 \cap \Newt(f)$, we know by
  \ref{lem-subdiv:b} that $(\nu,\val(a_\nu)) \in Q$. Thus $Q$ is the extended
  Newton polytope of $f$. In particular, the Newton subdivision of $f$ is the
  Newton subdivision of $C$.

  Following the definition of the marked Newton subdivision of a polynomial, we
  see with \ref{lem-subdiv:a} that the marking of $A_i$ in the Newton
  subdivision of $f$ is exactly $y_i$. So we have $\Subdiv(f) = S$, and it
  follows that $\Trop(f) = C$ by Lemma \ref{lemma:markednewton}.
\end{proof}

For a tropical curve $C$ in $ \RR^3 / \RR \cdot \one $ that is not a classical
line, Lemma \ref{lem-subdiv} can be used to compute all homogeneous polynomials
that tropicalize to $C$. However, if $C$ is a classical line the concept of
marked Newton subdivisions is not applicable. So in this case we should
describe the polynomials tropicalizing to $C$ separately.

\begin{Remark}[Realizability for classical lines] \label{rem:classline}
Let $C$ in $\mathbb R^2$ be a classical line and $y$ any point on $C$. 
Its Newton polytope is $1$-dimensional, \ie there are $\nu,\mu \in \mathbb N^2$ such that 
$\Newt(C) = \conv(\nu,\mu) \subset \mathbb R^2$. Let $m$ be the lattice length of $\Newt(C)$. 
Then the tropical curve $C$ is realizable by exactly the polynomials
$$f = \sum_{i=0}^m a_i x^{(1-\frac im)\nu + \frac im \mu} \in \Ct[x_1,x_2]$$
such that 
\begin{itemize}
 \item $\val(a_0) + \nu \cdot y = \val(a_m) + \mu \cdot y$ and
 \item $\val(a_i) + \left( (1-\frac im) \nu + \frac im \mu \right) \cdot y \geq \val(a_0) + \nu \cdot y$ 
for all $i = 1,\ldots,m-1$.
\end{itemize}
\end{Remark}

We are now ready to present our algorithm to check relative realizability. It
uses the matroid projections of Section \ref{sec:proj}. As already mentioned,
the concepts of this section can be transferred to the homogeneous case using
the identification of $ \RR^2 $ with $\mathbb R^3 / \mathbb R \cdot \mathbf 1 $
given by $ (x_1,x_2) \mapsto [0,x_1,x_2] $. The objects used in the algorithm
should be interpreted in this way.

In the following algorithm, we assume that we can perform calculations with
Puiseux series. We will explain afterwards in Remark \ref{remark:decomp} that
we can restrict ourselves in the computations to Puiseux series with fixed and
finitely many $t$-powers, so that the algorithm can actually be implemented on
a computer.

\begin{Notation}
  Let $f \in \Ct[x]$ be a polynomial, and let $B$ be a basis of $M(L)$. By
  $a_{B,\nu}$ we denote the coefficient of $x^\nu$ in the polynomial $f_B$ of
  Definition \ref{def:algproj}, \ie we have $f_B = \sum_{|\nu| = d} a_{B,\nu}
  x^\nu \in \Ct[x_i: i \in B]$.
\end{Notation}

\begin{alg}[Computing the relative realizability of curves in a tropical plane]
    \label{alg-1} ~

  INPUT: A homogeneous linear ideal $L$ in $\Ct[x]$ corresponding to a plane in
  $ \PP^n_{\Ct} $ with generators in $K[x]$, and a tropical curve $C$ in $
  \Sigma = B(M(L)) $

  OUTPUT: $1$ if the curve is relatively realizable in $L$, $-1$ otherwise

  ALGORITHM:
  \begin{enumerate}
  \item Compute the degree $d$ of $C$, \ie the degree of the recession fan of
    $C$, as \eg in \cite[Lemma 2.9]{wir}.
  \item Compute a basis $ (j_0,j_1,j_2) $ of $M(L)$.
  \item Set 
       $$ \qquad\quad
          f = \sum_{|\nu| = d} a_\nu x^\nu \in \Ct[x_{j_0},x_{j_1},x_{j_2}], $$
     where we consider the $ a_\nu \in \Ct $ as parameters (note that we need
     degree $d$ by Remark \ref{rem-trop-deg}, and that any homogeneous
     polynomial of degree $d$ can be written modulo $L$ in this form).
  \item \label{alg-1:d} For every basis $B$ of $M(L)$, do the following:
  \begin{enumerate}
   \item Compute the tropical push-forward $p_*^B (C)$ (see Remark
     \ref{rem-push}).
   \item Compute the polynomial $f_B \in \Ct [x_i: i \in B]$ as in Definition
     \ref{def:algproj}; its coefficients $ a_{B,\nu} $ are $K$-linear
     combinations of the parameters $ a_\nu $.
   \item For any $\nu \notin \Newt(p_*^B C)$, collect the condition $ a_{B,\nu}
     = 0 $ (as linear condition on the $ a_\nu $).
   \item If $p_*^B(C)$ is not a classical line, compute the marked Newton
     subdivision $S$ of $p_*^B (C)$ and collect the conditions on the
     valuations of the coefficients $a_{B,\nu}$ of $f_B$ equivalent to $S$
     being the marked Newton subdivision of $f_B$, \ie to $ p_*^B(C) =
     \Trop(f_B) $ (see Lemma \ref{lem-subdiv}):
 
     For any $(A,y) \in S$ with $A = \{\nu_1,\ldots,\nu_t\}$, we get
     equalities: 
       $$ \qquad\qquad\qquad
          \val(a_{B,\nu_i}) + y \cdot \nu_i
          = \val(a_{B,\nu_1}) + y \cdot \nu_1 \quad
          \text{for all $ i \in \{2,\ldots,t\} $.} $$
     For any lattice point $\eta$ in $\conv(A)$ which is not contained in $A$,
     we get the condition:
       $$ \qquad\qquad\qquad
          \val(a_{B,\eta}) + y \cdot \eta
          \geq \val(a_{B,\nu_1}) + y \cdot \nu_1. $$ 
   \item If $p_*^B(C)$ is a classical line, collect the conditions on $f_B$
     from Remark \ref{rem:classline}.
  \end{enumerate}

  \item Check if the collected conditions have a common solution. If such a
    solution exists, $C$ is realizable by Theorem \ref{mainthm}, so return $1$.
    If there is no common solution, the tropical curve $C$ is not relatively
    realizable in $L$, hence set the output to $-1$. 
 \end{enumerate}
\end{alg}

\begin{Remark}[Computation of the push-forwards of $C$] \label{rem-push}
  To compute the tropical push-forwards $p_*^B (C)$ in Algorithm \ref{alg-1} in
  the easy case when the tropical curve $C$ is contained in the matroidal fan
  $ \Sigma $, we follow the canonical attempt: We naively project any
  polyhedron of the tropical variety, possibly refine them so they fit together
  with the other images, forget about those mapping to lower-dimensional
  polyhedra, and combine polyhedra mapping to the same maximal-dimensional
  polyhedron in the projection by summing up their multiplicities. This is
  illustrated in the following example.
\end{Remark}

\begin{Example}
 Consider the tropical curve $C$ in $ \Sigma = \Trop(x_0+x_1+x_2+x_3) $ shown
 in the picture below, with vertices $ [0,1,1,0],[0,0,0,2],[1,0,0,1]$, and
 $[1,0,0,3]$. Its tropical push-forward $p_*^{(0,1,2)} C$ is shown on the
 right. If we do not indicate a weight of an edge, the weight is $1$.

 \def\message#push{}%
  {\begin{center}\input{push}\end{center}}

 For the computation of the tropical push-forward $p_*^{(0,1,2)} C$ we have to
 refine the polyhedron $\conv([1,0,0,1],[0,1,1,0])$ to the two polyhedra
 $\conv([1,0,0,1],[0,0,0,0])$ and $\conv([0,0,0,0],[0,1,1,0])$. With this
 refinement, the cells $\conv([1,0,0,1],[0,0,0,0])$ and
 $\conv([1,0,0,3],[0,0,0,2])$ of $C$ both map to the cell
 $\conv([1,0,0],[0,0,0])$ in $p_*^{(0,1,2)} C$. So, this cell has multiplicity
 $2$. Moreover, the cell $ [1,0,0] + \cone([1,0,0]) $ also has multiplicity $2$
 since there are two cells with multiplicity $1$ mapping to it. Since we only
 consider tropical curves up to refinement, we did not indicate this refinement
 of the cell $\cone([1,0,0])$ of $ p_*^{(0,1,2)} C $ in our picture. The
 polyhedra $ \conv([1,0,0,1],[1,0,0,3])$ and $[1,0,0,3] + \cone([0,0,0,1])$
 both map to the point $[1,0,0]$, so to a polyhedron of lower dimension. Hence,
 these cells do not contribute to the push-forward.
\end{Example}

\begin{Remark}[Fixing an initial valuation] \label{rem:initialval}
  For any $c \in \mathbb R$, we have $L+(f) = L + (t^{-c}f)$. Given a tropical
  curve $C$ in $B(M(L))$, fixing an initial basis $ (j_0,j_1,j_2) $ of $M(L)$
  as in Algorithm \ref{alg-1} and a vertex $\eta$ of $\Newt(p_*^{(j_0,j_1,j_2)}
  C)$, we might thus assume that for any polynomial $f \in
  \Ct[x_{j_0},x_{j_1},x_{j_2}]$ realizing $C$, the coefficient of $x^\eta$ in
  $f$ has valuation zero. This assumption transforms the conditions in
  Algorithm \ref{alg-1} \ref{alg-1:d} into conditions of the form
  \begin{align*}
    a_{B,\nu} &= 0 \\
    \text{or} \quad
    \val(a_{B,\nu}) &= c_\nu^B \tag{$*$} \\
    \text{or} \quad
    \val(a_{B,\nu}) &\geq c_\nu^B
  \end{align*}
  for some $c_\nu^B \in \mathbb R$ and all bases $B$ of $M(L)$, depending on
  whether $\nu$ is contained in the Newton polytope of the projection $ p_*^B C
  $, and whether it is a vertex of a polytope in the corresponding Newton
  subdivision.
\end{Remark}

\begin{Notation} \label{notation:rhs} ~
  \begin{enumerate}
  \item For a polynomial $ f = \sum_{|\nu| \leq d} a_\nu x^\nu \in \Ct[x] $,
    let $a_\nu^k$ be the coefficient of $t^k$ in $a_\nu$, \ie we have $a_\nu =
    \sum_{k \in \mathbb R} a_\nu^k t^k$.
  \item The set of all right hand sides $c_\nu^B$ in Remark
    \ref{rem:initialval} will in the following be denoted by $\RHS(C)$. It is
    obviously a finite subset of $ \RR $.
  \end{enumerate}
\end{Notation}

\begin{Remark}[Reducing Puiseux series to finitely many coefficients]
    \label{remark:decomp}
  In Algorithm \ref{alg-1}, we start by setting $f = \sum_{|\nu| = d} a_\nu
  x^\nu$ and consider its coefficients as parameters. We then get conditions on
  the valuations of $a_{B,\nu}$, where $B$ is a basis of $M(L)$. However, it is
  not clear how to work with elements in $\Ct$ and thus how we can check that
  these conditions have a common solution. We will now see that we can simplify
  the parameters $a_\nu \in \Ct$ such that they only contain fixed and finitely
  many powers of $t$, namely $ t^k $ for $ k \in \RHS(C) $.

  For any basis $B$ of $ M(L) $, $ k \in \RR $, and $ \mu \in \NN^2 $, note
  that the $ t^k \, x^\mu $-coefficient $ a_{B,\mu}^k $ of $ f_B $ is a
  $K$-linear combination of the $ t^k $-coefficients $ \{ a_\nu^k : |\nu| = d
  \} $ since $L$ is generated by polynomials over $K$. Hence if $f$ satisfies
  the conditions $(*)$ of Remark \ref{rem:initialval}, the polynomial
    \[ g = \sum_{|\nu| = d} b_\nu x^\nu
       \quad\text{with}\quad
       b_\nu = \sum_{k \in \RHS(C)} a_\nu^k t^k \]
  has the same $ t^k $-terms as $f$ for all $ k \in \RHS(C) $, and thus
  satisfies the conditions $(*)$ as well. To check relative realizability we
  can therefore assume from the beginning that $ f = \sum_{|\nu|=d}
  \sum_{k \in \RHS(C)} a_\nu^k t^k \, x^\nu $, and consider the finitely many
  coefficients $ a_\nu^k \in K $ as our new set of parameters. The conditions
  $(*)$ can then be translated into equations $ a_{B,\nu}^k = 0 $ (for the
  condition $ a_{B,\nu} = 0 $ or if $ k<c_\nu^B $) and inequalities $
  a_{B,\nu}^k \neq 0 $ (for the condition $ \val(a_{B,\nu}) = c_\nu^B $ if $
  k=c_\nu^B $), where as explained above the $ a_{B,\nu}^k $ are fixed
  $K$-linear combinations of the $ a_\nu^k $. This finally allows to implement
  Algorithm \ref{alg-1} on a computer. 
\end{Remark}

In particular, we see that the conditions on the parameters $\{a_\nu^k: |\nu| =
d, k \in \RHS(C)\}$ can be decomposed into conditions on the parameters
$\{a_\nu^k: |\nu| = d\}$ for all $k \in \RHS(C)$. These conditions on the
$a_\nu^k$ for fixed $k$ are parts of the conditions for tropical fan curves.
So $C$ is relatively realizable if and only if there are tropical fan curves
for each $ k \in \RHS(C) $ satisfying the corresponding conditions. In other
words, our relative realizability question for non-constant coefficient
tropical curves can be decomposed into several relative realizability questions
for tropical fan curves, \ie constant coefficient tropical curves.

\begin{Remark}[Implementation in Singular] \label{rem-singular}
  The algorithm described above is implemented in the Singular library
  ``realizationMatroidsNC.lib'' \cite{Win12}. Let us explain how to use this
  library. As in the software package \emph{gfan} by Anders Jensen \cite{gfan},
  a tropical curve in $ \RR^{n+1}/\RR \cdot \one $ is given by its vertices,
  the directions of its recession fan, its edges, and the multiplicities on the
  edges. More precisely, a tropical curve $C$ is completely described by the
  following three lists:
  \begin{enumerate}
  \item $V = ((1,v_1),\ldots,(1,v_s),(0,r_1),\ldots,(0,r_t))$,  where
    $v_1,\ldots,v_s$ are the vertices of $C$ and $r_1,\ldots,r_t$ the
    primitive vectors of the cones in $\Rec(C)$, both in homogeneous
    coordinates, and both normalized with minimum $0$ over the coordinates.
  \item $E = ( (i,j): $ there is a bounded edge between $ v_i $ and $ v_j $ or
    there is an unbounded cell starting at $ v_i $ in the direction of $
    r_{j-s} $).
  \item $M = (m_1,\ldots,m_k)$, where $m_i$ is the weight of the maximal cell
    of $C$ at the $i$-th entry of $E$.
  \end{enumerate}
  For our algorithm, it is not necessary to order the elements in $V$ as above.
  It is only important that the entries in the tuples in $E$ fit to the
  positions of the corresponding elements in $V$. Moreover, the primitive
  directions may appear only once in $V$, no matter how many unbounded cells in
  $C$ leave in this direction.

  \vspace{1ex}

  \begin{sidepic}{ex-3}
    Let us consider the example curve $ C \subset \Trop(L) $ shown on the
    right: we have $ L = (x_0+x_1+x_2+x_3) $, and the curve $C$ has weight $1$
    on all maximal cells, the unit vectors as unbounded directions, and
    vertices $[0,1,1,0]$ and $[1,0,0,1]$. The following Singular code then
    shows how to check the relative realizability of $C$ as in Algorithm
    \ref{alg-1}, and how to compute a polynomial realizing $C$ if this is the
    case. Note that the entries of $V$ are restricted to integers because of
    Proposition \ref{prop-intvert}.
  \end{sidepic}

  \vspace{1ex}

  \begin{verbatim}
  > LIB "realizationMatroidsNC.lib";
  > ring R = (0,t),(x0,x1,x2,x3),dp;
  > ideal L = x0+x1+x2+x3;
  > list V = list(intvec(1,0,1,1,0), intvec(1,1,0,0,1),
    intvec(0,0,1,0,0), intvec(0,0,0,1,0),
    intvec(0,0,0,0,1), intvec(0,1,0,0,0));
  > list E = list(intvec(1,2),intvec(1,3),intvec(1,4),
    intvec(2,5),intvec(2,6));
  > list M = list(1,1,1,1,1);
  > list C = list(V,E,M);
  > realizable(L,C);
  1
  > realizablePoly(L,C);
  1 (t)*x0+x1+(t+1)*x2
  \end{verbatim}
\end{Remark}

\subsection{The space of relatively realizable curves}

Given a tropical curve in $\Sigma$, we can check if it is relatively realizable
with our algorithm above. However, one may also want to know if the space of
all relatively realizable tropical curves in $\Sigma$ carries a nice structure.
To investigate this question, we first have to study the space of all tropical
curves in $\Sigma$. 

Firstly, consider the space of tropical curves in $ \mathbb R^n / \RR \cdot
\one $. Fixing a combinatorial type, including the directions of all edges, any
curve of this type can be described by the position of one of its vertices and
the lattice lengths of its bounded edges (which might have to satisfy some
linear conditions in order for loops to be closed). Requiring the curves to lie
in $ \Sigma $ imposes some additional linear conditions. Hence the space of all
tropical curves in $ \Sigma $ of a given combinatorial type is an open
polyhedron. Its boundary, obtained by shrinking some of the edge lengths
to $0$, corresponds to curves of different combinatorial types.

For a given degree $ d \in \NN_{>0} $, let us denote by $ T_d(L) $ the space of
all tropical curves of degree $d$ in $L$, obtained by gluing the above
polyhedra together according to the degeneration of the corresponding
combinatorial types. Hence $ T_d(L) $ can be written as a finite union of
polyhedra, glued together at their faces along linear maps. We will refer to
such a structure as an \emph{abstract polyhedral set}. Note that it does not
come with a natural embedding in a real vector space.

Inside $ T_d(L) $, denote by $ R_d(L) $ the subset of all relatively realizable
tropical curves in $ \Sigma $. We will prove that $ R_d(L) $ is closed in $
T_d(L) $, and that it is in fact an abstract polyhedral set itself.

\begin{Lemma} \label{lemma:closed}
  $ R_d(L) $ is closed in $ T_d(L) $ for all $d$.
\end{Lemma}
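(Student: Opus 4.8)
The plan is to show that the complement of $R_d(L)$ in $T_d(L)$ is open, by exhibiting for each curve $C \notin R_d(L)$ a whole neighborhood of curves that are also non-realizable. The key idea is that relative realizability is governed by the finite list of conditions collected in Algorithm \ref{alg-1}, which by Remark \ref{rem:initialval} and Remark \ref{remark:decomp} amount to a finite system of linear equalities, inequalities $\neq 0$, and valuation inequalities on the finitely many parameters $a_\nu^k$ (with $k$ ranging over the finite set $\RHS(C)$). Crucially, the right-hand sides $c_\nu^B$ of these conditions — being determined by the marked Newton subdivisions of the projections $p_*^B C$, which in turn are determined by the vertex positions and edge lengths of $C$ — depend on $C$ in a way that is \emph{continuous} (in fact affine-linear) as long as the combinatorial type of $C$ and of all its projections stays fixed. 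So the plan has two ingredients: (1) within a fixed combinatorial type, the realizable locus is cut out by conditions varying continuously with $C$, and one checks that non-solvability is an open condition; (2) handle the boundary, where the combinatorial type degenerates.

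First I would fix a combinatorial type and consider the family of systems of conditions as $C$ varies in the corresponding open polyhedron inside $T_d(L)$. By Theorem \ref{mainthm}, $C$ is realizable iff there is a common solution $f$ to the conditions for all bases $B$; by Remark \ref{remark:decomp} this decomposes over $k \in \RHS(C)$ into finitely many systems, each on the finite parameter set $\{a_\nu^k\}$, and each such system is a finite conjunction of conditions of the shape $\ell(a^k) = 0$, $\ell(a^k) \neq 0$, or (for the valuation inequalities) $\ell(a^k) = 0$ forced by $k < c_\nu^B$, where $\ell$ runs over fixed $K$-linear forms. For fixed combinatorial type the linear forms $\ell$ are literally constant, and only the thresholds $c_\nu^B$ move continuously with $C$; since these thresholds enter only through the comparisons $k < c_\nu^B$, $k = c_\nu^B$, $k > c_\nu^B$ with $k$ drawn from a finite set, the \emph{truth value} of which linear equalities get imposed is locally constant in $C$ away from the finitely many hyperplanes $\{c_\nu^B = k\}$. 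On the open strata where it is constant, the solvability question is one fixed question about a fixed linear system over the algebraically closed field $K$, hence has a locally constant answer; on the thresholds themselves, passing from $k < c$ to $k = c$ only \emph{drops} an equation, so solvability can only improve — meaning that if $C$ is non-realizable, so are all nearby curves of the same type. I would phrase this carefully: non-realizability of $C$ implies that for some $k$ the associated linear system is overdetermined, and this overdeterminedness is stable under the small perturbations of the thresholds, giving an open neighborhood of non-realizable curves within the stratum.

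Next I would treat points $C \notin R_d(L)$ lying on the boundary of a top-dimensional polyhedron, i.e. where some edge lengths of $C$ are zero. Here I would argue by the same mechanism applied to each adjacent combinatorial type: a neighborhood of $C$ in $T_d(L)$ meets only finitely many combinatorial types, each a face-incident polyhedron; on each one the argument above produces (after possibly shrinking) a neighborhood of non-realizable curves, \emph{provided} that $C$ itself, viewed as a limit from that stratum, is non-realizable. This is where the care is needed: one must know that realizability is compatible with the identification of tropical curves under refinement and with edge-length degeneration, so that the limit curve $C$ being non-realizable is consistent across the strata incident to it. I expect this gluing/boundary step to be the main obstacle — the analysis inside a fixed type is essentially bookkeeping with linear algebra over $K$, but ensuring the finitely many local neighborhoods, one per incident stratum, patch to a single neighborhood in $T_d(L)$, and that the threshold functions $c_\nu^B$ extend continuously across the boundary, requires invoking that $\Rec$, $\Newt$, $\Subdiv$, and the push-forwards $p_*^B$ all behave continuously under edge-length degeneration (as set up in Section \ref{sec:proj} and the preceding subsections). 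Once that continuity is in hand, finitely many shrinking choices suffice and the complement of $R_d(L)$ is open, so $R_d(L)$ is closed.
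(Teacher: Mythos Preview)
Your approach is essentially the same as the paper's: show the non-realizable locus is open by arguing that the finite linear-algebraic obstruction coming from Algorithm \ref{alg-1} and Remarks \ref{rem:initialval}--\ref{remark:decomp} persists under small deformation, using continuity of the thresholds $c_\nu^B$ in the vertex data of $C$, and that combinatorial degenerations only strengthen the conditions.

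One point where your write-up is muddier than the paper's: your threshold-crossing analysis (``passing from $k<c$ to $k=c$ only drops an equation, so solvability can only improve'') tangles up the dependencies, since the $k$'s you compare against the $c_\nu^B$ are themselves elements of $\RHS(C)$, i.e.\ other thresholds $c_\mu^A$; so what you are really tracking is the order relations among the $c_\nu^B$ themselves, not a fixed $k$ against a moving $c$. The paper cuts through this by naming the contradiction explicitly: non-realizability means some required inequality $a_{B,\mu}^{k}\neq 0$ at $k=c_\mu^B$ is forced to fail by the equations $a_{A,\nu}^{k}=0$ coming from those $(A,\nu)$ with $c_\nu^A>c_\mu^B$; under a small deformation the strict inequalities $c_\nu^A>c_\mu^B$ survive, and any vertex resolution turns some $\ge$-conditions into $=$-conditions, which only reinforces the contradiction. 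Recasting your argument in these terms removes the ambiguity in the threshold-crossing step and makes the boundary/degeneration case immediate rather than a separate gluing argument.
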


\begin{proof}
  We prove that the space of tropical curves in $\Sigma$ which are not
  relatively realizable is open in $T_d(L)$. Let $C$ be a tropical curve in
  $\Sigma$ which is not relatively realizable and let $c_{\nu}^B$ by the right
  hand sides of the conditions $\val(a_{B,\nu}) = c_{\nu}^B$ or
  $\val(a_{B,\nu}) \geq c_{\nu}^B$, respectively, as in Remark
  \ref{rem:initialval}. For simplicity, we set $ c_\nu^B = \infty $ if the
  corresponding condition is $ a_{B,\nu} = 0 $.
  
  Since $C$ is not relatively realizable, there is a basis $B$ of $M(L)$ and a
  vertex $\mu$ of a polytope in the Newton subdivision of $p_*^B (C)$ such that
  the required condition $ a_{B,\mu}^k \neq 0 $ for $ k = c_\mu^B $ is in
  contradiction to the required equations $ a_{A,\nu}^k = 0 $ for all bases
  $A$ and lattice points $ \nu $ with $ c_\nu^A > k = c_\mu^B $.

  For a small deformation of $C$, two things can happen:
  \begin{itemize}
  \item A vertex of valence greater than $3$ in one (or more) of the
    projections $ p_*^A C $ may be resolved. Correspondingly, the Newton
    subdivision of $ p_*^A C $ is subdivided further. This means that some
    conditions $\val(a_{A,\nu}) \geq c_{\nu}^A$ change into
    $ \val(a_{A,\nu}) = c_{\nu}^A $. If there was a contradiction with the
    original conditions, there will still be a contradiction with these
    stronger conditions.
  \item The right hand sides of the conditions in Remark \ref{rem:initialval}
    slightly change. However, if we had $ c_{\nu}^A > c_{\mu}^B $ originally,
    this inequality still holds after a small deformation of these numbers.
    Hence, the contradiction is still present and thus the deformed curve is
    not relatively realizable either.
  \end{itemize}
  Altogether, this means that the space of tropical curves in $ \Sigma $ that
  are not relatively realizable is open in $ T_d(L) $.
\end{proof}

\begin{Proposition} \label{prop-polyhedral}
  $ R_d(L) $ is an abstract polyhedral set for all $d$.
\end{Proposition}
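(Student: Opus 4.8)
The plan is to show that $R_d(L)$, being a subset of the abstract polyhedral set $T_d(L)$, decomposes compatibly with the polyhedral cell structure of $T_d(L)$: for each combinatorial type $\gamma$, with associated open polyhedron $P_\gamma \subset T_d(L)$ parametrized by the position of one vertex and the lattice lengths of the bounded edges, I want to prove that $R_d(L) \cap P_\gamma$ is the intersection of $P_\gamma$ with a finite union of polyhedra (in the ambient real vector space of parameters for $P_\gamma$). Since this holds for each type and the types glue together along linear maps, this exhibits $R_d(L)$ as an abstract polyhedral set. Closedness is already guaranteed by Lemma \ref{lemma:closed}, so the remaining content is the local polyhedrality on each $P_\gamma$.

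The key observation, already isolated in the paragraph following Remark \ref{remark:decomp}, is that relative realizability of $C$ is equivalent to the existence of a common solution of the conditions $(*)$ of Remark \ref{rem:initialval}, which split as equations $a_{B,\nu}^k = 0$ and inequations $a_{B,\nu}^k \neq 0$ on the finitely many parameters $a_\nu^k \in K$ (with $|\nu| = d$, $k \in \RHS(C)$), where each $a_{B,\nu}^k$ is a fixed $K$-linear combination of the $a_\nu^k$. The crucial point is that which equations/inequations occur is governed entirely by (i) the combinatorial type of each projection $p_*^B C$ together with its Newton subdivision, and (ii) the comparison of the real numbers $c_\nu^B \in \RHS(C)$ among each other and with the $t$-exponents $k$. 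Both of these are constant on the relative interior of each cell of a suitable common refinement of $T_d(L)$: fixing the combinatorial type $\gamma$ fixes the combinatorial types and Newton subdivisions of all projections, and the $c_\nu^B$ are affine-linear functions of the edge lengths and vertex position of $C$ (they are the right-hand sides produced by Remark \ref{rem:initialval}, built from valuations that depend affine-linearly on $C$). Therefore, within $P_\gamma$, one further subdivides along the finitely many hyperplanes $\{c_\nu^B = c_{\nu'}^{B'}\}$ and $\{c_\nu^B = k\}$ ($k$ in a fixed finite set of relevant exponents); on the relative interior of each resulting subcell, the full list of $K$-linear equations and inequations on the $a_\nu^k$ is literally constant. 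For such a fixed list, the solvability over the algebraically closed field $K$ is a single yes/no answer — it does not depend on the particular point of the subcell. Hence $R_d(L) \cap P_\gamma$ is a union of (relatively open) subcells together with, by Lemma \ref{lemma:closed}, their relevant faces, which is a finite union of polyhedra intersected with $P_\gamma$.

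More precisely, I would argue as follows. First, refine $T_d(L)$ so that on each cell the combinatorial type of $C$ and of every projection $p_*^B C$ is fixed; the $\RHS$ sets are then uniformly bounded, so there is a single finite set $\mathcal{K} \subset \RR$ of exponents $t^k$ that can ever be relevant. Second, subdivide each cell further by the finitely many affine hyperplanes coming from equalities between the functions $c_\nu^B$ and between these and the elements of $\mathcal{K}$; this is a polyhedral subdivision. Third, on the relative interior of a top cell $\sigma$ of this subdivision, the linear system $\mathcal{S}_\sigma$ of equations $a_{B,\nu}^k = 0$ and open conditions $a_{B,\nu}^k \neq 0$ on the parameters $a_\nu^k$ is independent of the chosen point, so either every curve in $\relint \sigma$ is realizable or none is. Thus $R_d(L)$ is a union of such relatively open cells; being closed in $T_d(L)$ by Lemma \ref{lemma:closed}, it is the corresponding union of closed cells, hence an abstract polyhedral set.

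I expect the main obstacle to be the bookkeeping showing that the right-hand sides $c_\nu^B$ really are affine-linear functions of the parameters describing $P_\gamma$ (vertex position and bounded-edge lengths), and that they are globally bounded across $P_\gamma$ so that only finitely many exponents $t^k$ ever matter — in other words, making precise that "the combinatorial data is constant on cells" genuinely controls the entire list of algebraic conditions. Once that is in place, the passage to a yes/no solvability statement over $K$ and the reduction to a finite union of polyhedra are formal, and closedness is already done.
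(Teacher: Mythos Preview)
Your proposal is correct and follows essentially the same route as the paper: refine $T_d(L)$ so that on each open cell the combinatorial types of all projections $p_*^B C$ and the order relations among the right-hand sides $c_\nu^B$ are fixed, observe that realizability is then a constant yes/no on each such cell, and conclude via Lemma \ref{lemma:closed}. The only imprecision is your reference to a fixed finite set $\mathcal{K}\subset\RR$ of exponents --- the actual exponents $c_\nu^B$ move linearly with $C$; what is fixed is the finite \emph{index} set $\{(B,\nu)\}$ and, on each cell, their order type, which is exactly what the argument needs (and what the paper states).
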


\begin{proof}
  Continuing the notation of the proof of Lemma \ref{lemma:closed}, let $C$ be
  a relatively realizable curve, and let $ c_\nu^B $ be the right hand sides of
  the conditions in Remark \ref{rem:initialval}. Consider the subset $S$ of $
  T_d(L) $ of all curves $ \tilde C $ such that
  \begin{enumerate}
  \item $ p_*^B \tilde C $ has the same combinatorial type as $ p_*^B C $ for
    all bases $B$ of $ M(L) $, and
  \item the right hand sides $ \tilde c_\nu^B $ satisfy the same equalities and
    inequalities as the original ones, \ie for all bases $A,B$ and lattice
    points $ \mu,\nu $ we have $ \tilde c_\mu^A = \tilde c_\nu^B $ if and only
    if $ c_\mu^A = c_\nu^B $, and $ \tilde c_\mu^A < \tilde c_\nu^B $ if and
    only if $ c_\mu^A < c_\nu^B $.
  \end{enumerate}
  As these right hand sides depend linearly on the vertices, and thus also on
  the edge lengths of the curves, this subset $ S \subset T_d(L) $ is an open
  polyhedron. Moreover, by the form of the conditions for relative
  realizability all curves in $S$ are relative realizable. Hence $ R_d(L) $ is
  a finite union of open polyhedra, and so the proposition follows by Lemma
  \ref{lemma:closed}.
\end{proof}

  \section{General criteria for relative realizability} \label{sec-criteria}

As before, let $K$ be any algebraically closed field. Moreover, let $L \subset
\Ct[x]$ be the ideal of a plane $ X \subset \PP^n_{\Ct} $ not contained in any
coordinate hyperplane and with generators in $K[x]$, and let $M(L)$ be the
corresponding matroid. As a warm-up, we want to reprove the known fact that if
a tropical curve $C$ in $ \Sigma = B(M(L)) $ is relatively realizable in $L$,
then so is its recession fan, see for instance \cite[Theorem 3.5.6]{MS}.

\begin{Proposition} \label{prop:critrecession}
  Let $C$ be a tropical curve in $ \Sigma $ which is relatively realizable in
  $L$. Then its recession fan $\Rec(C)$ is also relatively realizable in $L$.
\end{Proposition}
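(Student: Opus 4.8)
The plan is to use the projection machinery from Section \ref{sec:proj}, together with the fact that tropical projections commute with taking recession fans. Since $C$ is relatively realizable in $L$, there is a homogeneous polynomial $f \in \Ct[x]$ with $\Trop(L+(f)) = C$. By Theorem \ref{mainthm} this is equivalent to $\Trop(f_B) = p_*^B C$ for all bases $B$ of $M(L)$, where $f_B \in \Ct[x_i : i \in B]$ is the projected polynomial. The idea is to produce a polynomial $g$ whose tropicalization is $\Rec(C)$ by constructing, for each basis $B$, a polynomial in three variables over $\Ct$ that tropicalizes to $p_*^B(\Rec(C)) = \Rec(p_*^B C)$, and then to check that these can be assembled consistently — again invoking Theorem \ref{mainthm} in the other direction.

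First I would reduce to the two-dimensional situation. By Theorem \ref{mainthm} it suffices to work with the curves $p_*^B C$ in $\RR^3 / \RR \cdot \one \cong \RR^2$ and their recession fans $\Rec(p_*^B C)$, using the identification with $\RR^2$ as in the text. For a plane tropical curve in $\RR^2$ that is not a classical line, Lemma \ref{lem-subdiv} characterizes the realizing polynomials in terms of the marked Newton subdivision. The recession fan $\Rec(p_*^B C)$ has the same Newton polytope $\Newt(p_*^B C)$ but a \emph{coarser} Newton subdivision — informally, the subdivision obtained by "taking $t \to 0$" or by letting the bounded edge lengths shrink. Concretely, one expects that if $f_B = \sum_\nu a_{B,\nu} x^\nu$ realizes $p_*^B C$, then the "leading term" polynomial $\sum_\nu a_{B,\nu}^{c} x^\nu$ obtained by keeping only the terms of minimal relevant valuation (after the normalization of Remark \ref{rem:initialval}) realizes $\Rec(p_*^B C)$; more cleanly, one can pass to the constant-coefficient polynomial whose coefficients are the lowest-valuation coefficients of $f_B$ at the vertices of the Newton polytope. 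The key point is that the conditions of Lemma \ref{lem-subdiv} (or Remark \ref{rem:classline} in the classical-line case) for $\Rec(p_*^B C)$ are \emph{implied} by those for $p_*^B C$: the equalities $\val(a_{B,\nu}) + \nu \cdot y_i = \val(a_{B,\eta}) + \eta \cdot y_i$ on faces of the coarser subdivision follow by summing/chaining the ones on the finer subdivision, and the $\ge$-inequalities likewise carry over.

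The cleanest route, and the one I would actually pursue, avoids reconstructing $g$ by hand: it uses the decomposition explained after Remark \ref{remark:decomp}. There we learned that the relative realizability conditions on the parameters $\{a_\nu^k : |\nu| = d,\ k \in \RHS(C)\}$ decompose, for each fixed $k \in \RHS(C)$, into the conditions for a tropical \emph{fan} curve on the parameters $\{a_\nu^k\}$, and that $C$ is relatively realizable iff for each $k \in \RHS(C)$ there is a tropical fan curve satisfying the corresponding constant-coefficient conditions. Since $C$ is relatively realizable, pick such an $f$ and let $k_0 = \min \RHS(C)$ (after the normalization of Remark \ref{rem:initialval}); the parameters $\{a_\nu^{k_0}\}$ then satisfy the constant-coefficient conditions coming from the bottom layer, and these are exactly the conditions for $\Rec(C)$ to be relatively realizable — because $\Rec$ commutes with the projections $p^A$ (Remark \ref{rem-proj-trop}, Remark \ref{rem-push}) and the Newton subdivision of each $\Rec(p_*^B C)$ is the coarsening realized by the lowest-valuation layer. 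Hence the constant-coefficient polynomial assembled from $\{a_\nu^{k_0}\}$ realizes $\Rec(C)$, proving the proposition.

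I expect the main obstacle to be verifying precisely that the "bottom layer" conditions one extracts from a realizing $f$ really are the defining conditions for $\Rec(C)$ — i.e. controlling how the marked Newton subdivision of $p_*^B C$ degenerates to that of $\Rec(p_*^B C)$ and checking that no new, unsatisfied condition appears in the limit. This requires care with the $\RHS$-normalization and with the classical-line case (Remark \ref{rem:classline}), where the Newton polytope is one-dimensional and the argument must be phrased directly in terms of valuations rather than subdivisions. Everything else — the use of Theorem \ref{mainthm} to pass between the homogeneous curve and its projections, and the compatibility of $\Rec$ with push-forward — is already available in the excerpt and is routine to apply.
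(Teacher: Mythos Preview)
Your approach has a genuine gap. The central claim --- that the ``bottom layer'' polynomial $g = \sum_\nu a_\nu^{k_0} x^\nu$ realizes $\Rec(C)$, equivalently that the $k_0$-layer conditions extracted from $C$ coincide with the realizability conditions for $\Rec(C)$ --- is false in general. The conditions for $\Rec(C)$ require $a_{B,\nu}$ to be nonzero for \emph{every} vertex $\nu$ of $\Newt(p_*^B C)$, but after normalization the $c_\nu^B$ at such vertices need not all equal the minimal value $k_0$. Concretely, for a curve as in Proposition~\ref{prop:posreal2} with $q>0$, a vertex $\mu$ of $P_3$ with $s_\mu>0$ carries the condition $\val(a_\mu)=s_\mu q>0$, hence $a_\mu^0=0$, so $\mu\notin\Newt(g_3)$ and $g$ does not realize $\Rec(C)$. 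The same error appears in your first sketch: the valuation equalities from Lemma~\ref{lem-subdiv} for $\Rec(p_*^B C)$ are $\val(a_{B,\nu})=\val(a_{B,\eta})$ for all vertices $\nu,\eta$ of $P_B$, and these do \emph{not} follow by chaining the equalities $\val(a_{B,\nu})+\nu\cdot y_i=\val(a_{B,\eta})+\eta\cdot y_i$ from the finer subdivision, since the markings $y_i$ are nonzero.

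The paper instead argues the contrapositive and uses only those conditions that are invariant under passing from $C$ to $\Rec(C)$: the equalities $a_{B,\mu}=0$ for $\mu\notin\Newt(p_*^B C)$ and the existence of \emph{some} equality condition $\val(a_{A,\nu})=c_\nu^A$ at each vertex $\nu$ of $\Newt(p_*^A C)$. If $\Rec(C)$ is not realizable, one such vertex inequality is forced to fail by the zero conditions alone; since the Newton polytopes (and hence both kinds of conditions) are unchanged for $C$, the same contradiction obstructs realizability of $C$. The point you are missing is that one should not try to match all layers, only the ``Newton polytope boundary'' data that $\Rec$ actually preserves.
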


\begin{proof}
  Assume that $\Rec(C)$ is not relatively realizable in $L$. Since $\Rec(C)$ is
  a fan, so is any push-forward $p_*^B \Rec(C)$ for a basis $B$ of $ M(L) $. In
  particular, the Newton subdivision of $p_*^B \Rec(C)$ consists of one
  polytope $\Newt(p_*^B \Rec(C))$. If $p_*^B \Rec(C)$ is not a classical line,
  then $\Newt(p_*^B \Rec(C))$ is marked with the origin in the sense of
  Definition \ref{def:newtonsubdiv}. Hence, the conditions $(*)$ of Remark
  \ref{rem:initialval} for $ \Rec(C) $ are of the form $ a_{B,\nu} = 0 $, $
  \val(a_{B,\nu}) = 0 $, or $ \val(a_{B,\nu}) \ge 0 $. The same is true by
  Remark \ref{rem:classline} if $ p_*^B \Rec(C) $ is a classical line.

  As $ \Rec (C) $ is not realizable, there must be a contradiction in these
  conditions. In other words, there must be an $ a_{A,\nu} $ with condition $
  \val(a_{A,\nu}) = 0 $ which is a linear combination of other $ a_{B,\mu} $
  with conditions $ a_{B,\mu} = 0 $. But then $ \nu $ must be a vertex of $
  \Newt(p_*^A \Rec(C)) = \Newt(p_*^A C) $, and the $ \mu $ are outside $
  \Newt(p_*^B \Rec(C)) = \Newt(p_*^B C) $. Hence for $C$ we again have the
  conditions $ \val(a_{A,\nu}) = c_\nu^A $ and $ a_{B,\mu} = 0 $ for the same $
  A,B,\nu,\mu $ as above, which is still a contradiction. It follows that $C$
  is not realizable either.
\end{proof}

\begin{sidepic}{plane}
  In general, it seems to be difficult to work out non-algorithmic rules to
  decide whether a given curve in $ \Sigma $ is relatively realizable. However,
  we will now prove some results in the first interesting case when $X$ is the
  plane in $ \PP^3_{\Ct} $ with ideal $ L=(x_0+x_1+x_2+x_3) $. In this case, we
  will denote its tropicalization $ \Sigma $ by $ L^3_2 $, as shown in the
  picture on the right. Moreover, let us fix the following notation for the
  projections that we will need.
\end{sidepic}

\begin{Notation}[Projections and Newton polytopes] \label{not:13} ~
\begin{enumerate}
 \item Let $f$ be a homogeneous polynomial of degree $d$ in
   $ \Ct[x] := \Ct[x_0,x_1,x_2,x_3]$. 
 \begin{itemize}
  \item We define $f_3 := f_{(0,1,2)} \in \Ct[x_0,x_1,x_2]$ and
    $f_1 := f_{(0,2,3)} \in \Ct[x_0,x_2,x_3]$, with the notation as in
    Definition \ref{def:algproj}.
  \item For $ i \in \{1,3 \}$ let $\Delta_i$ be the Newton polytope of $f_i$.
  \item We denote the coefficients of $f_3$ and $ f_1 $ by $ a_\nu $ and $
    b_\nu $, respectively. As we always use non-homogeneous coordinates for
    Newton polytopes, this means that
      \[ \qquad\qquad\qquad
         f_3 = \sum_{i+j \le d} a_{(i,j)} \, x_0^{d-i-j} x_1^i x_2^j
         \quad\text{and}\quad
       f_1 = \sum_{i+j \le d} b_{(i,j)} \, x_0^{d-i-j} x_2^i x_3^j. \]
 \end{itemize}
 \item Let $C$ be a tropical curve in $L_2^3$.
 \begin{itemize}
  \item We define $C_3 = p_*^{(0,1,2)} C$ and $C_1 = p_*^{(0,2,3)} C$.
  \item The Newton polytope of $C_i$ will in the following be denoted by $P_i$,
    $i=1,3$.
 \end{itemize}
\end{enumerate}
\end{Notation}

In order to prove obstructions to relative realizability, we aim to find
relations between the Newton polytopes $\Delta_1$ and $\Delta_3$, \ie relations
between the polynomials $f_1$ and $f_3$. If the Newton polytopes $P_1$ and
$P_3$ of a tropical curve $C$ do not satisfy these relations, we can use this
to prove that $C$ cannot be relatively realizable. 

\begin{Notation} \label{notation:binomial}
 For $n \in \mathbb Z$ and $k \in \mathbb N$, let $\binom{n}{k} = \frac{1}{k!}
 \, n\cdot(n-1)\cdots(n-k+1)$. For $k \in \mathbb Z_{<0}$, we set $\binom nk =
 0$. Note that $\binom nk = 0$ if $n > 0$ and $k > n$.
\end{Notation}

Using induction on $a$, it is an easy calculation to prove the following lemma.

\begin{Lemma} \label{lemma:equationbinomial}
 For $a,b,c \in \mathbb N$, we have 
   $$ \sum_{j=0}^a (-1)^j \binom aj \binom{b+j}c = (-1)^a \binom{b}{c-a}
      \quad\text{and}\quad
      \sum_{j=0}^a (-1)^j \binom aj \binom{b-j}c = \binom{b-a}{c-a}.$$
\end{Lemma}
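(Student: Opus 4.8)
The plan is to prove both identities by induction on $a$, treating them essentially in parallel since the second follows from the first by replacing $b$ with $-b$ (and using the sign convention for binomial coefficients with negative top entry) — or, if that substitution is delicate to justify directly, by running an independent but structurally identical induction. I would first record the Pascal-type recursion $\binom nk = \binom{n-1}{k} + \binom{n-1}{k-1}$, valid for all $n \in \mathbb Z$ and $k \in \mathbb N$ with the conventions of Notation \ref{notation:binomial}, as this is the only external fact needed.

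For the base case $a = 0$, both sums reduce to the single term $\binom bc$, and the right hand sides are $(-1)^0 \binom{b}{c} = \binom bc$ and $\binom{b}{c}$, respectively, so there is nothing to check. For the inductive step, assume both identities hold for some $a \ge 0$ and all $b, c \in \mathbb N$. To handle $a+1$, I would use the standard splitting $\binom{a+1}{j} = \binom aj + \binom a{j-1}$, write
\[
  \sum_{j=0}^{a+1} (-1)^j \binom{a+1}{j} \binom{b+j}{c}
  = \sum_{j=0}^{a} (-1)^j \binom{a}{j} \binom{b+j}{c}
    + \sum_{j=1}^{a+1} (-1)^j \binom{a}{j-1} \binom{b+j}{c},
\]
reindex the second sum by $j \mapsto j+1$ so it becomes $-\sum_{j=0}^{a} (-1)^j \binom aj \binom{b+1+j}{c}$, and then apply the inductive hypothesis to each of the two resulting sums (the second with $b$ replaced by $b+1$). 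This gives $(-1)^a \binom{b}{c-a} - (-1)^a \binom{b+1}{c-a}$, and one final application of the Pascal recursion $\binom{b+1}{c-a} = \binom{b}{c-a} + \binom{b}{c-a-1}$ collapses this to $-(-1)^a \binom{b}{c-a-1} = (-1)^{a+1}\binom{b}{(c)-(a+1)}$, which is exactly the claimed right hand side. The second identity is entirely analogous: the same splitting and reindexing reduce it to $\binom{b-a}{c-a} - \binom{b-1-a}{c-a}$ (after applying the hypothesis with $b$ replaced by $b-1$ in the reindexed sum), and Pascal's rule in the form $\binom{b-a}{c-a} = \binom{b-1-a}{c-a} + \binom{b-1-a}{c-1-a}$ finishes it.

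The only real subtlety — and the step I would be most careful about — is making sure the conventions for binomial coefficients with negative or out-of-range arguments are respected at the boundary of the summation range and when $c - a$ or $c - a - 1$ becomes negative, so that the Pascal recursion and the reindexing shifts remain valid term by term rather than only ``generically.'' Since $\binom nk = 0$ for $k < 0$ by Notation \ref{notation:binomial} and $\binom nk$ is a polynomial in $n$ of degree $k$ for $k \ge 0$, the recursion $\binom nk = \binom{n-1}{k} + \binom{n-1}{k-1}$ does hold as a polynomial identity for all $n \in \mathbb Z$ and all $k \in \mathbb N$ (checking $k = 0$ separately, where it reads $1 = 1 + 0$), so no genuine difficulty arises; it just requires a sentence of justification. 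Everything else is the routine bookkeeping the statement's preamble already advertises, so I would present it compactly.
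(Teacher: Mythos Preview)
Your proposal is correct and takes exactly the approach the paper indicates (the paper's entire proof is the sentence ``Using induction on $a$, it is an easy calculation to prove the following lemma''). One small point worth tightening: in the inductive step for the second identity you apply the hypothesis with $b$ replaced by $b-1$, which leaves $\mathbb N$ when $b=0$; this is harmless since both sides are polynomials in $b$ (or, equivalently, you can simply carry the induction hypothesis for all $b\in\mathbb Z$ from the start, as your final paragraph already sets up).
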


\begin{Lemma} \label{lemma:equpoly}
  Let $ f \in \Ct[x] $ be a homogeneous polynomial of degree $d$. Choose $ k,n
  \in \mathbb N$ with $ k+n \leq d$, $l \in \mathbb N$ with $l \leq n+1$,
  and $m \in \mathbb N$ with $m \leq k+1$.

  As in the following picture, we denote by $\Lambda_1$ the set of all $(i,j)$
  such that $k \leq i \leq d-n$ and either $j < l$ or $j \geq d-i-n+l$. By
  $\Lambda_3$, we denote the set of all $(d-t-s,s) \in \NN^2 $ for $ t,s \in
  \NN $ with $n \leq t \leq d-k$ and either $s < m$ or $s \geq d-t-k+m$.

  \begin{center} \begin{tikzpicture}[scale=0.4]
    \filldraw[color=gray] (0,0) circle (2pt);
    \filldraw[color=gray] (1,0) circle (2pt);
    \filldraw (2,0) circle (4pt);
    \filldraw (3,0) circle (4pt);
    \filldraw (4,0) circle (4pt);
    \filldraw (5,0) circle (4pt);
    \filldraw[color=gray] (6,0) circle (2pt);
    \filldraw[color=gray] (7,0) circle (2pt);
    \filldraw[color=gray] (0,1) circle (2pt);
    \filldraw[color=gray] (1,1) circle (2pt);
    \filldraw (2,1) circle (6pt);
    \filldraw (3,1) circle (4pt);
    \filldraw (4,1) circle (4pt);
    \filldraw (5,1) circle (4pt);
    \filldraw[color=gray] (6,1) circle (2pt);
    \filldraw[color=gray] (0,2) circle (2pt);
    \filldraw[color=gray] (1,2) circle (2pt);
    \filldraw[color=gray] (2,2) circle (2pt);
    \filldraw[color=gray] (3,2) circle (2pt);
    \filldraw[color=gray] (4,2) circle (2pt);
    \filldraw (5,2) circle (4pt);
    \filldraw[color=gray] (0,3) circle (2pt);
    \filldraw[color=gray] (1,3) circle (2pt);
    \filldraw[color=gray] (2,3) circle (2pt);
    \filldraw[color=gray] (3,3) circle (2pt);
    \filldraw (4,3) circle (4pt);
    \filldraw[color=gray] (0,4) circle (2pt);
    \filldraw[color=gray] (1,4) circle (2pt);
    \filldraw[color=gray] (2,4) circle (2pt);
    \filldraw (3,4) circle (4pt);
    \filldraw[color=gray] (0,5) circle (2pt);
    \filldraw[color=gray] (1,5) circle (2pt);
    \filldraw (2,5) circle (4pt);
    \filldraw[color=gray] (0,6) circle (2pt);
    \filldraw[color=gray] (1,6) circle (2pt);
    \filldraw[color=gray] (0,7) circle (2pt);

    \draw (5,5) node {$\Lambda_1$};
    \draw[<->] (0,-0.5) -- (2,-0.5);
    \draw (1,-1) node {$k$};
    \draw[<->] (5,-0.5) -- (7,-0.5);
    \draw (6,-1) node {$n$};
    \draw[<->] (1.5,0) -- (1.5,1);
    \draw (0.3,0.5) node {$l-1$};

    \begin{scope}[xshift = 3cm]
    \filldraw[color=gray] (10,0) circle (2pt);
    \filldraw[color=gray] (11,0) circle (2pt);
    \filldraw (12,0) circle (4pt);
    \filldraw (13,0) circle (4pt);
    \filldraw (14,0) circle (4pt);
    \filldraw (15,0) circle (4pt);
    \filldraw[color=gray] (16,0) circle (2pt);
    \filldraw[color=gray] (17,0) circle (2pt);
    \filldraw[color=gray] (10,1) circle (2pt);
    \filldraw (11,1) circle (4pt);
    \filldraw (12,1) circle (4pt);
    \filldraw (13,1) circle (4pt);
    \filldraw (14,1) circle (4pt);
    \filldraw[color=gray] (15,1) circle (2pt);
    \filldraw[color=gray] (16,1) circle (2pt);
    \filldraw (10,2) circle (4pt);
    \filldraw[color=gray] (11,2) circle (2pt);
    \filldraw[color=gray] (12,2) circle (2pt);
    \filldraw[color=gray] (13,2) circle (2pt);
    \filldraw[color=gray] (14,2) circle (2pt);
    \filldraw[color=gray] (15,2) circle (2pt);
    \filldraw (10,3) circle (4pt);
    \filldraw[color=gray] (11,3) circle (2pt);
    \filldraw[color=gray] (12,3) circle (2pt);
    \filldraw[color=gray] (13,3) circle (2pt);
    \filldraw[color=gray] (14,3) circle (2pt);
    \filldraw (10,4) circle (4pt);
    \filldraw[color=gray] (11,4) circle (2pt);
    \filldraw[color=gray] (12,4) circle (2pt);
    \filldraw[color=gray] (13,4) circle (2pt);
    \filldraw (10,5) circle (4pt);
    \filldraw[color=gray] (11,5) circle (2pt);
    \filldraw[color=gray] (12,5) circle (2pt);
    \filldraw[color=gray] (10,6) circle (2pt);
    \filldraw[color=gray] (11,6) circle (2pt);
    \filldraw[color=gray] (10,7) circle (2pt);

    \draw (15,5) node {$\Lambda_3$};
    \draw[<->] (15,-0.5) -- (17,-0.5);
    \draw (16,-1) node {$n$};
    \draw[<->] (15.3,0.3) -- (14.3,1.3);
    \draw (16.4,1) node {$m-1$};
    \draw[<->] (10,-0.5) -- (12,-0.5);
    \draw (11,-1) node {$k$};
    \end{scope}
  \end{tikzpicture} \end{center}

  Then the coefficients $ a_\nu $ and $ b_\nu $ of $f_3$ and $f_1$ as in
  Notation \ref{not:13} satisfy the relation
    $$ \sum_{\nu \in \Lambda_1} \beta_\nu b_\nu
       = \sum_{\nu \in \Lambda_3} \alpha_\nu a_\nu $$
  for some $ \alpha_\nu, \beta_\nu \in \ZZ $ with $\beta_{(k,l-1)} = 1$. 
\end{Lemma}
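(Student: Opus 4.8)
The plan is to deduce the relation from the single substitution identity linking the two projections, and then to manufacture it as an alternating binomial combination of the $b_\nu$ designed, via Lemma~\ref{lemma:equationbinomial}, to annihilate every coefficient $a_\nu$ with $\nu\notin\Lambda_3$.

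\emph{Step 1 (change of projection).} Since $f_3=f_{(0,1,2)}$ and $f_1=f_{(0,2,3)}$ are the images of one and the same class $\overline f\in R/L$ under the two isomorphisms of Definition~\ref{def:algproj}, and $L=(x_0+x_1+x_2+x_3)$ forces $\overline{x_1}=-\overline{x_0}-\overline{x_2}-\overline{x_3}$, one gets
\[ f_1(x_0,x_2,x_3)=f_3\bigl(x_0,-(x_0+x_2+x_3),x_2\bigr). \]
Expanding $\bigl(-(x_0+x_2+x_3)\bigr)^i$ by the multinomial theorem and comparing the coefficients of $x_0^{d-I-J}x_2^Ix_3^J$ on both sides yields, for every lattice point $(I,J)$ of the $d$-simplex,
\[ b_{(I,J)}=\sum_{(i,j)}(-1)^i\binom iJ\binom{i-J}{\,I-j\,}\,a_{(i,j)}, \]
the sum running over all lattice points $(i,j)$ of the $d$-simplex; with the convention of Notation~\ref{notation:binomial} the terms in which some exponent would be negative vanish automatically. (That the formula is correct can be checked on the monomials $x_1$ and $x_3$.)

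\emph{Step 2 (the combination).} Substituting this into $\sum_{\nu\in\Lambda_1}\beta_\nu b_\nu$ and interchanging summations, the coefficient of each $a_{(i,j)}$ becomes $(-1)^i\Phi(i,j)$ with $\Phi(i,j):=\sum_{(I,J)\in\Lambda_1}\beta_{(I,J)}\binom iJ\binom{i-J}{I-j}$. We choose the integers $\beta_{(I,J)}$ as signs times products of binomial coefficients matched to the two staircase pieces of $\Lambda_1$ — the solid block $\{k\le I\le d-n,\ 0\le J\le l-1\}$ and the antidiagonal strip $\{k\le I\le d-n,\ d-I-n+l\le J\le d-I\}$ — normalized so that $\beta_{(k,l-1)}=1$; this is possible because $(k,l-1)$ is the vertex of the block at which all binomial factors collapse to $\binom00$. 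Using the trinomial symmetry $\binom iJ\binom{i-J}{I-j}=\binom i{I-j}\binom{i-I+j}{J}$ to separate each of the two contributions into an $I$-sum and a $J$-sum, Lemma~\ref{lemma:equationbinomial} collapses the resulting alternating binomial sums, and one checks that the result vanishes precisely when $(i,j)$ lies outside $\Lambda_3$: the three defining inequalities of the complement of $\Lambda_3$ (namely $i+j<k$, or $i+j>d-n$, or $j\ge m$ together with $i\ge k-m+1$) arise as three distinct vanishing mechanisms — two of them because a collapsed binomial lands at a negative lower argument, the third from the interaction of the two staircase pieces with the parameter $m$. Hence $\Phi(i,j)=0$ for $(i,j)\notin\Lambda_3$, and for $(i,j)\in\Lambda_3$ we put $\alpha_{(i,j)}:=(-1)^i\Phi(i,j)\in\ZZ$, which gives the asserted identity with $\beta_{(k,l-1)}=1$.

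\emph{Main obstacle.} The entire content is in Step~2: pinning down the precise $\beta_{(I,J)}$ and verifying that every $a_{(i,j)}$ with $(i,j)\notin\Lambda_3$ really drops out. The bookkeeping is delicate because the range of $J$ in the antidiagonal strip depends on $I$, and because one must watch the conventions of Notation~\ref{notation:binomial} carefully to know exactly when Lemma~\ref{lemma:equationbinomial} applies and when a partial binomial sum already vanishes for range reasons. It is probably cleanest to organize the verification as an induction on $l$ and $m$, stripping off one row from each staircase at a time and reducing to the base case $l=m=1$, where $\Lambda_1$ and $\Lambda_3$ are ordinary triangular strips and a single application of the first identity in Lemma~\ref{lemma:equationbinomial} finishes the argument.
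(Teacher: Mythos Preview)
Your overall strategy coincides with the paper's: use the substitution $f_1(x_0,x_2,x_3)=f_3(x_0,-x_0-x_2-x_3,x_2)$ to express each $b_{(I,J)}$ as a binomial combination of the $a_{(i,j)}$, then take a carefully weighted alternating sum of the $b_{(I,J)}$ over $\Lambda_1$ so that Lemma~\ref{lemma:equationbinomial} kills every $a_{(i,j)}$ with $(i,j)\notin\Lambda_3$. Your Step~1 formula is correct (it agrees, after the trinomial symmetry you mention, with the paper's).

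The gap is that Step~2 is only a plan, not an argument. You never write down the weights $\beta_{(I,J)}$, and the sentences ``one checks that the result vanishes precisely when\ldots'' and ``it is probably cleanest to organize the verification as an induction on $l$ and $m$'' are promissory notes. The whole lemma lives in exactly this computation, so as written nothing has been proved. In particular, your proposed induction on $l$ and $m$ is more work than necessary and you do not indicate how the inductive step would go.

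For comparison, the paper simply writes the weights down:
\[
\beta_{(I,J)} \;=\; (-1)^{\,I-k+l-1-J}\binom{I-m}{\,I-k\,}\binom{d-I-n+l-1-J}{\,d-I-n\,},
\]
summed over $k\le I\le d-n$ and $0\le J\le d-I$. The second binomial is $0$ for $l\le J<d-I-n+l$, so only $\nu\in\Lambda_1$ contribute, and at $(I,J)=(k,l-1)$ both binomials become $\binom{0}{0}=1$. Substituting the Step~1 formula, the paper sums first over $J$ (one application of Lemma~\ref{lemma:equationbinomial}) and then over $I$ (a second application), obtaining a closed form for the coefficient of each $a_{(d-t-s,s)}$; the vanishing for $t<n$, $t>d-k$, and $m\le s<d-t-k+m$ is then read off directly from the resulting binomial factors. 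No induction is needed: two straight applications of Lemma~\ref{lemma:equationbinomial} do the job once the $\beta$'s are on the table.
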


\begin{proof}
  As $ f_1(x_0,x_2,x_3) = f_3(x_0,-x_0-x_2-x_3,x_2) $, it is an easy
  calculation to see that
    $$ b_{(i,j)} = \sum_{s=0}^{i} \sum_{t=0}^{d-i-j} (-1)^{d-t-s}
       \binom{d-t-s}{i-s} \binom{d-i-t}{j} a_{(d-t-s,s)}. $$
  We consider the linear combination
   $$ S = \sum_{i=k}^{d-n} \;\sum_{j=0}^{d-i} (-1)^{i-k+l-1-j}
      \binom{i-m}{i-k} \binom{d-i-n+l-1-j}{d-i-n} b_{(i,j)}. $$
  Note that $\binom{d-i-n+l-1-j}{d-i-n} = 0$ if $ l \leq j < d-i-n+l$. Hence
  $S$ is a linear combination of $ b_{(i,j)} $ with $ (i,j) \in \Lambda_1 $.
  Moreover, the coefficient of $b_{(k,l-1)}$ in $S$ is $1$.

  Replacing $b_{(i,j)}$ in $S$ by the above equality, we get
  \begin{align*}
    S = \sum_{i=k}^{d-n} \; \sum_{j=0}^{d-i} \; \sum_{s=0}^i \sum_{t=0}^{d-i-j}
    &(-1)^{i-k+l-1-j+d-t-s} \binom{i-m}{i-k} \\
    &\cdot \binom{d-i-n+l-1-j}{d-i-n} \binom{d-t-s}{i-s} \binom{d-i-t}{j}
    a_{(d-t-s,s)}.
  \end{align*}
  Changing the summation order, we get
  $ S = \sum_{s=0}^{d-n} ~ \sum_{t=0}^{\min(d-k,d-s)} ~
  \sum_{i=\max(k,s)}^{\min(d-n,d-t)} ~ \sum_{j=0}^{d-i-t} \big( \ldots \big) $.
  This expression simplifies by Lemma \ref{lemma:equationbinomial} to
  \begin{align*}
    S = \sum_{s=0}^{d-n} ~ \sum_{t=0}^{\min(d-k,d-s)} ~
      \sum_{i=\max(k,s)}^{\min(d-n,d-t)}
    &(-1)^{i-k+l-1+d-t-s} \binom{i-m}{i-k} \\
    &\cdot \binom{d-t-s}{i-s} \binom{t+l-n-1}{t-n} a_{(d-t-s,s)}.
  \end{align*}
  We have to check that this expression contains only $ a_\nu $ with $ \nu \in
  \Lambda_3 $. For $t < n$ we have $\binom{t+l-n-1} {t-n} =0$, and thus
  $a_{(d-t-s,s)}$ does not appear in $S$. It is obvious that $ a_{(d-t-s,s)} $
  also does not appear in $S$ if $ t>d-k $. For $i < k$ we have
  $\binom{i-m}{i-k} = 0$, so we can consider the last sum as starting at $i =
  s$. For $n \leq t \leq d-k$, the coefficient of $a_{(d-t-s,s)}$ in $S$ is
  then given by
  \begin{align*}
    \sum_{i=s}^{d-t} &(-1)^{i-k+l-1+d-t-s} \binom{i-m}{i-k}
      \binom{d-t-s}{i-s} \binom{t+l-n-1}{t-n} \\
    = & \binom{t+l-n-1}{t-n} \sum_{i=0}^{d-t-s}
        (-1)^{i-k+l-1+d-t} \binom{i+s-m}{i+s-k} \binom{d-t-s}i \\
    = & \binom{t+l-n-1}{t-n} \sum_{i=0}^{d-t-s}
        (-1)^{i-k+l-1+d-t} \binom{s-m+i}{k-m} \binom{d-t-s}i \\
    = & \binom{t+l-n-1}{t-n} (-1)^{i-k+l-1+s} \binom{s-m}{t-d+k+s-m}.
  \end{align*}
  This coefficient is zero for all $(d-t-s,s)$ with $ n \leq t \leq d-k$ and $
  m \leq s < d-t-k+m$. Hence the linear combination $S$ only involves
  $a_\nu$ for $ \nu \in \Lambda_3$.
\end{proof}

This dependency between $f_1$ and $f_3$ will be used to prove our obstructions
to relative realiza\-bility: in the proof of Proposition \ref{prop:andreas} we
show that for certain $k,n,l,$ and $m$, all $a_\nu$ and $b_\nu$ appearing in
the above equation but $b_{(k,l-1)}$ are zero since the corresponding exponents
are not contained in the Newton polytopes $\Delta_1$ and $\Delta_3$,
respectively. Using Lemma \ref{lemma:equpoly}, we then see that we must have
$b_{(k,l-1)} = 0$. 

In \cite{wir}, we also used relations between $f_1$ and $f_3$ to prove
obstructions to relative realizability of tropical fan curves. Lemma
\ref{lemma:equpoly} generalizes most of these relations: it implies \cite[Lemma
5.9]{wir}, \cite[Lemma 5.15]{wir}, and \cite[Proposition 5.17]{wir}. 

\begin{Notation} \label{not-numbers}
  Let $C$ be a tropical curve of degree $d$ in $L_2^3$. In the following, we
  will refer to the diagonals in $ P_3 $ and the vertical lines in $ P_1 $ as
  \emph{rows}, counting from $0$ with the row of lattice length $d$. As
  illustrated in the picture in the introduction, for a vertex $
  \mu=(\mu_1,\mu_2) $ of $ P_3 $ we set
  \begin{align*}
    s_\mu &= d-\mu_1-\mu_2 \;\; \text{(the row number of $ \mu $)}, \\
    n_\mu &= |\{ \nu \in \NN^2 : \nu_1 + \nu_2 = \mu_1+\mu_2, \nu \notin P_3
      \}| \;\; \text{(lattice points not in $ P_3 $ in the row of $ \mu $)}, \\
    r_\mu &= d+1-s_\mu-n_\mu \;\; \text{(lattice points in $ P_3 $ in the row
      of $ \mu $)}, \\
    l_\mu &= |\{ \nu \in \NN^2: \nu_1 = n_\mu, \nu \in P_1\}| \;\;
      \text{(lattice points in $ P_1 $ in row number $ n_\mu $)}.
  \end{align*}
  Correspondingly, for a vertex $ \mu $ of $ P_1 $ we define
  \begin{align*}
    s_\mu &= \mu_1, \\
    n_\mu &= |\{ \nu \in \NN^2 : \nu_1 = \mu_1, \nu_2 \le d-\nu_1, \nu \notin
      P_1 \}|, \\
    r_\mu &= d+1-s_\mu-n_\mu, \\
    l_\mu &= |\{ \nu \in \NN^2 : d-\nu_1-\nu_2 = n_\mu, \nu \in P_3 \}|.
  \end{align*}
\end{Notation}

As a sequel to \cite{wir}, we will now first restrict our attention to tropical
fan curves in $L_2^3$ and prove a new obstruction to relative realizability, in
the case of curves which are only contained in two opposite cones of $ L^3_2 $,
say $\cone([e_0],[e_3])$ and $\cone([e_1],[e_2])$. Afterwards, we show that
this necessary condition also results in sufficient conditions to relative
realizability.

\begin{Proposition} \label{prop:andreas}
  Let $C$ be a tropical fan curve of degree $d$ in $L_2^3$ contained only in
  the two opposite cones $\cone([e_0],[e_3])$ and $\cone([e_1],[e_2])$. If
  there is a vertex $ \mu $ of $ P_1 $ or $ P_3 $ such that $ l_\mu < r_\mu $,
  then $C$ is not relatively realizable.
\end{Proposition}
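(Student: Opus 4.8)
The plan is to derive a contradiction from the assumption that $C$ is relatively realizable by some homogeneous polynomial $f \in \Ct[x_0,x_1,x_2,x_3]$, using the linear relation between the coefficients $b_\nu$ of $f_1$ and $a_\nu$ of $f_3$ furnished by Lemma \ref{lemma:equpoly}. The idea is: for a suitable choice of the parameters $k,n,l,m$ depending on the vertex $\mu$ with $l_\mu < r_\mu$, I want the sets $\Lambda_1$ and $\Lambda_3$ appearing in that lemma to be positioned so that \emph{every} lattice point in $\Lambda_1$ other than $(k,l-1)$ lies outside the Newton polytope $\Delta_1$ of $f_1$, and every lattice point in $\Lambda_3$ lies outside $\Delta_3$. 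Since $f$ realizes $C$, we have $\Newt(f_i)=\Newt(p^A_* C) = P_i$ (by Lemma \ref{lemma:markednewton} applied to the projections, together with Theorem \ref{mainthm}), so any exponent not in $P_i$ has vanishing coefficient. Hence the relation $\sum_{\nu\in\Lambda_1}\beta_\nu b_\nu = \sum_{\nu\in\Lambda_3}\alpha_\nu a_\nu$ collapses to $\beta_{(k,l-1)} b_{(k,l-1)} = 0$, and since $\beta_{(k,l-1)}=1$ this forces $b_{(k,l-1)}=0$. But $(k,l-1)$ will be chosen to be $\mu$ itself (or a point we know lies in $P_1$), so its coefficient cannot vanish — contradiction.

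The first step is to set up notation and reduce to a vertex $\mu$ of $P_1$ (the case of a vertex of $P_3$ is symmetric under exchanging the roles of the two projections, i.e. of $\Lambda_1$ and $\Lambda_3$, since $C$ is supported only in the two opposite cones). Because $C$ lies only in $\cone([e_0],[e_3])$ and $\cone([e_1],[e_2])$, the recession fan, and hence the Newton polytopes $P_1$ and $P_3$, have a very restricted shape — each row of $P_i$ is an interval, and the geometry of the two polytopes is linked by the unbounded directions. I would unwind exactly what "$C$ supported in two opposite cones" means for $P_1$ and $P_3$: the rows of $P_3$ that are nonempty correspond under the matroid/tropical dictionary to the rows of $P_1$, and the numbers $s_\mu, n_\mu, r_\mu, l_\mu$ from Notation \ref{not-numbers} record precisely the interval endpoints. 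The key combinatorial input is that $l_\mu$ counts lattice points of $P_1$ in row $n_\mu$, and $r_\mu$ counts lattice points of $P_3$ in the row of $\mu$; the inequality $l_\mu < r_\mu$ says the "target" row on the other side is too short to accommodate the relation.

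The technical heart is choosing $k,n,l,m$ so that the supports land correctly. Concretely, for the vertex $\mu$ of $P_1$ I expect to take $k = s_\mu$ (so the $i$-coordinate is pinned to the column of $\mu$), $l$ so that $l-1$ is the appropriate extreme exponent so that $(k,l-1)=\mu$, and then $n$ and $m$ chosen using $n_\mu$ and $r_\mu$ so that the "$j < l$ or $j \ge d-i-n+l$" strip of $\Lambda_1$, intersected with the column $i=k$, meets $P_1$ only at $\mu$, while $\Lambda_3$ — which by the lemma lives in rows $t$ with $n\le t\le d-k$ and columns $s$ with $s<m$ or $s\ge d-t-k+m$ — avoids $P_3$ entirely because $l_\mu < r_\mu$ makes the relevant rows of $P_3$ too short. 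I'd verify these claims by a direct but careful lattice-point count, using the definitions in Notation \ref{not-numbers} and the constraint that the rows are intervals. The main obstacle I anticipate is precisely this bookkeeping: getting the four parameters to simultaneously satisfy the range constraints of Lemma \ref{lemma:equpoly} ($k+n\le d$, $l\le n+1$, $m\le k+1$) \emph{and} make both supports disjoint from the Newton polytopes except at the single point $(k,l-1)=\mu$. Once that is arranged, the conclusion is immediate: $b_\mu = 0$ contradicts $\mu \in P_1 = \Newt(f_1)$, so no such $f$ exists and $C$ is not relatively realizable.
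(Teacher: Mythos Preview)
Your proposal is correct and follows essentially the same route as the paper's proof: reduce by symmetry to a vertex $\mu$ of $P_1$, invoke Lemma~\ref{lemma:equpoly} with $k=\mu_1$ (which equals $s_\mu$), $l=\mu_2+1$, $n=n_\mu$, and $m$ chosen so that the $l_\mu$ points of $P_3$ in row $n_\mu$ fit into the $r_\mu-1$ excluded positions of $\Lambda_3$, and conclude $b_\mu=0$ from $\Delta_i=P_i$. The only detail you leave implicit that the paper makes explicit is the further reduction (via the coordinate permutation $(0,1,2,3)\mapsto(3,1,2,0)$) to the case where $\mu$ is a \emph{lower} vertex of $P_1$, which is what guarantees $l=\mu_2+1\le n_\mu+1=n+1$ as required by the lemma.
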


\begin{sidepic}{prop-4}
  \textit{Proof.}
  Without loss of generality, we may assume that $\mu$ is a ``lower vertex'' of
  $P_1$, \ie $(\mu_1,j) \notin P_1$ for all $j < \mu_2$ (otherwise
  permute the coordinates via $(0,1,2,3) \mapsto (3,1,2,0)$). We want to
  apply Lemma \ref{lemma:equpoly}. So choose $l = \mu_2+1$, $k = \mu_1$ and
  $m$ such that any lattice point $\nu$ in $P_3$ with $\nu_1 +
  \nu_2 = d - n_\mu$ is among the $ r_\mu-1 $ points $(d-n_\mu-m,m),\ldots,
  (d-n_\mu-m-r_\mu+2,m+r_\mu-2)$.
\end{sidepic}

  Let us assume that there is a polynomial $f \in K[x]$ realizing $C$, so that
  $\Delta_1 = P_1$ and $\Delta_3 = P_3$. Consider the equation from Lemma
  \ref{lemma:equpoly}. We see that any coefficient $a_\nu$ for $ \nu \in
  \Lambda_3 $ and any coefficient $b_\nu$ for $ \nu \in \Lambda_1 $, except
  $b_\mu$, that appears in this equation must be zero since its corresponding
  point is not contained in $\Delta_3$ or $\Delta_1$, respectively. Hence we
  must have $b_\mu = 0$. However, this is a contradiction to the fact that
  $\mu$ is a vertex of $\Delta_1$. This means that such a polynomial cannot
  exist, hence $C$ is not relatively realizable. \hfill $ \Box $

\vspace{1ex}

\begin{Remark}
  In the proof of Proposition \ref{prop:andreas} we do not use the complete
  information that $C$ is only contained in two opposite cones of $L_2^3$.
  Instead, with the notation of Lemma \ref{lemma:equpoly} we only use that $
  P_3 \cap \Lambda_3 = \emptyset $, and $ P_1 \cap \Lambda_1 $ is a single
  point, which is a vertex of $ P_1 $. So if these conditions are fulfilled
  for any curve $C$ in $ L^3_2 $, then $C$ is not relatively realizable.
\end{Remark}

We now show that for this special class of tropical fan curves in $L_2^3$,
the obstructions of Proposition \ref{prop:andreas} give in fact rise to
conditions that are equivalent to relative realizability.

\begin{Proposition} \label{prop:posreal}
  Let $C$ be a tropical fan curve of degree $d$ in $L_2^3$ only contained in
  the two opposite cones $\cone([e_1],[e_2])$ and $\cone([e_0],[e_3])$. Using
  Notation \ref{not-numbers}, the curve $C$ is relatively realizable if and
  only if $ l_\mu \geq r_\mu$ for all vertices $\mu$ of $P_3$ and $P_1$.
\end{Proposition}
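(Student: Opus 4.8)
The plan is to prove the two implications separately. The "only if" direction is exactly Proposition \ref{prop:andreas}: if some vertex $\mu$ of $P_3$ or $P_1$ has $l_\mu < r_\mu$, then $C$ is not relatively realizable. So the real work lies in the "if" direction: assuming $l_\mu \ge r_\mu$ for every vertex $\mu$ of $P_3$ and $P_1$, we must construct a homogeneous polynomial $f \in K[x]$ of degree $d$ realizing $C$. Since $C$ is a fan curve only contained in the cones $\cone([e_1],[e_2])$ and $\cone([e_0],[e_3])$, by Theorem \ref{mainthm} and the analysis in Remark \ref{remark:decomp} it suffices to find a single polynomial $f$ (with coefficients in $K$, since we are in the constant-coefficient case) whose projections $f_3$ and $f_1$ have the prescribed Newton polytopes $P_3$ and $P_1$, marked at the origin in the sense of Definition \ref{def:newtonsubdiv} — i.e. so that $\Newt(f_3)=P_3$, $\Newt(f_1)=P_1$, and there are no unwanted cancellations forcing a vertex coefficient to vanish.

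The key observation driving the construction is the relation of Lemma \ref{lemma:equpoly}: $f_1$ and $f_3$ are linked by $f_1(x_0,x_2,x_3) = f_3(x_0,-x_0-x_2-x_3,x_2)$, and the coefficients $b_\nu$ of $f_1$ are explicit integer linear combinations of the coefficients $a_\nu$ of $f_3$. I would first translate the conditions "$\Newt(f_3) = P_3$" and "$\Newt(f_1) = P_1$" into: $a_\nu = 0$ for $\nu \notin P_3$, $b_\nu = 0$ for $\nu \notin P_1$, and $a_\nu \ne 0$, $b_\nu \ne 0$ at the vertices of $P_3$ and $P_1$ respectively. The vanishing conditions $a_\nu = 0$ for $\nu \notin P_3$ are satisfied simply by choosing $f_3$ supported on $P_3$. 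The constraints $b_\nu = 0$ for $\nu \notin P_1$ are then linear conditions on the free parameters $\{a_\nu : \nu \in P_3\}$, and the heart of the argument is to show that, precisely when $l_\mu \ge r_\mu$ for all vertices $\mu$, this linear system has a solution for which additionally all vertex coefficients of $P_3$ and $P_1$ are nonzero.

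To organize this I would work row by row. For each row (diagonal of $P_3$, i.e. fixed value of $i+j$), the coefficients $\{a_{(i,j)} : i+j \text{ fixed}\}$ contribute, via the explicit formula in Lemma \ref{lemma:equpoly}, to the coefficients $b_\nu$ of $f_1$ in a controlled range of rows of $P_1$. The inequality $l_\mu \ge r_\mu$ says, roughly, that in the relevant row of $P_1$ there are at least as many lattice points available as there are "degrees of freedom" being imposed coming from the row of $\mu$ — i.e. the system is not over-determined. I would make this precise by exhibiting, for each row, a square subsystem (indexed by a suitable set of lattice points on the $P_1$ side of the correct size $r_\mu$ or $r_\mu - 1$) whose coefficient matrix is one of the binomial-type matrices appearing in Lemma \ref{lemma:equpoly}, and arguing it is invertible (these matrices are triangular up to reindexing, or their determinants are products of the binomials $\binom{t+l-n-1}{t-n}$ which are nonzero in the relevant range). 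Choosing the vertex coefficients of $P_3$ generically (nonzero) and solving for the remaining $a_\nu$, one gets $b_\nu = 0$ outside $P_1$; genericity then also guarantees the vertex coefficients of $P_1$ are nonzero, since each such coefficient is a nonzero linear form in the chosen parameters. Finally I would check that a polynomial $f_3$ produced this way genuinely has $\Trop(f_3) = C_3$ and $\Trop(f_1) = C_1$ using Lemma \ref{lem-subdiv} (with the trivial valuation), and conclude via Theorem \ref{mainthm} that $f$ realizes $C$.

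The main obstacle I anticipate is the bookkeeping in the row-by-row argument: making the correspondence between the inequality $l_\mu \ge r_\mu$ at each vertex and the solvability of the associated linear subsystem fully rigorous, including verifying that the relevant binomial matrices are invertible and that the genericity choices can be made simultaneously consistent across all rows and all vertices. One has to be careful that choices made to kill $b_\nu$ for $\nu$ outside $P_1$ in one row do not force a vertex coefficient to vanish or conflict with the constraints coming from another row; the fact that $C$ lives in only two opposite cones is what keeps the polytopes $P_1$, $P_3$ in the "staircase" shape that makes this decoupling possible.
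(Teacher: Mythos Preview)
Your approach is genuinely different from the paper's and, as written, has real gaps.

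\textbf{How the paper argues.} Rather than analysing the linear system $\{b_\nu = 0 : \nu \notin P_1\}$ abstractly, the paper writes down explicit building blocks. For each vertex $\mu$ of $P_3$ (and analogously for $P_1$) it sets
\[
  f^\mu \;=\; x_0^{a_0}x_1^{a_1}x_2^{a_2}x_3^{a_3}(x_0+x_3)^a,
\]
with $a = r_\mu-1$, with $a_1,a_2$ chosen so that the row of $\mu$ in $P_3$ is hit exactly, and with $a_0,a_3$ chosen so that the image of the factor $(x_0+x_3)^a$ lands inside row $n_\mu$ of $P_1$; this last choice is possible precisely because $l_\mu \ge r_\mu$. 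One then checks by direct substitution that $\Newt(f^\mu_3)\subset P_3$, $\Newt(f^\mu_1)\subset P_1$, and that $\mu$ appears with coefficient $\pm 1$. A generic $K$-linear combination $f$ of all the $f^\mu$ therefore has $\Newt(f_3)=P_3$ and $\Newt(f_1)=P_1$. This bypasses entirely the question of invertibility of binomial matrices; the inequality $l_\mu \ge r_\mu$ is used only once, as a pigeonhole statement allowing the segment of length $r_\mu$ to be placed.

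\textbf{Gaps in your proposal.} First, your reduction ``it suffices to find $f$ with $\Newt(f_3)=P_3$ and $\Newt(f_1)=P_1$'' is not justified by Theorem \ref{mainthm}, which requires \emph{all} bases of $M(L)$, i.e.\ all four projections. The paper handles this by observing that $p^{(0,1,2)}$ and $p^{(0,2,3)}$ are jointly injective away from the interior of $\cone([e_1],[e_3])$, and then argues separately (using that $f^{(d,0)}=(x_0+x_3)^d$ contributes an $x_0^d$ term to $f_{(0,1,3)}$) that $\Trop(L+(f))$ has no rays in that interior. You would need some analogous argument. Second, the substitution $x_1\mapsto -x_0-x_2-x_3$ does \emph{not} send a single diagonal of $P_3$ to a single column of $P_1$, so the system does not decouple ``row by row'' in any obvious sense; the invertibility of the binomial submatrices you allude to is precisely the hard part, and you have not indicated which submatrices or why they are invertible. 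The paper's explicit $f^\mu$ construction is what replaces this missing linear-algebra argument.
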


\begin{proof}
  We already know by Proposition \ref{prop:andreas} that $C$ cannot be
  realizable if there is a vertex $\mu$ of $P_1$ or $P_3$ with $l_\mu < r_\mu$.
  So assume that we have $l_\mu \geq r_\mu$ for any vertex $\mu$ of $P_1$ and
  $P_3$. We will construct a polynomial $f^\mu = \sum_{|\nu| = d} a_\nu x^\nu
  \in K[x_0,x_1,x_2,x_3]$ for every vertex $\mu$ of $P_1$ (resp.\ $P_3$) such
  that $ \Newt(f^\mu_1) \subset P_1$ and $\Newt(f^\mu_3) \subset P_3$, and the
  Newton polytope $ \Newt(f^\mu_1) $ (resp.\ $ \Newt(f^\mu_3) $) contains the
  point $ \mu $. Summing up these polynomials with generic coefficients, we
  then get a polynomial $f$ such that $\Newt(f_3) = P_3$ and $\Newt(f_1) =
  P_1$.

  Let $\mu$ be a vertex of $P_3$. We set
  $f^\mu = x_0^{a_0}x_1^{a_1}x_2^{a_2}x_3^{a_3}(x_0+x_3)^a$ with
  \begin{itemize}
  \item $a = d-s_\mu-n_\mu$, \ie the Newton polytope of $(x_0+x_3)^a$ contains
    exactly $r_\mu$ points,
  \item $a_1$ and $a_2$ as the number of points on the diagonal
    $\conv((s_\mu,0),(0,s_\mu))$ on the left and right side of $P_3$, \ie we
    have $a_1+a_2 = n_\mu$,
  \item by assumption we can choose a part of $\conv((n_\mu,0),(n_\mu,d-n_\mu))
    \cap P_1$ of lattice length $a$, \ie it contains $ r_\mu $ points. Let
    $a_0$ and $a_3$ be the number of lattice points on the left and right side
    of this part, \ie we have $a + a_0 + a_3 = d - n_\mu$.
  \end{itemize}
  \begin{center} \begin{tikzpicture}[scale=0.4]
   \filldraw[color=gray] (0,0) circle (2pt);
   \filldraw[color=gray] (1,0) circle (2pt);
   \filldraw[color=gray] (2,0) circle (2pt);
   \filldraw[color=gray] (3,0) circle (2pt);
   \filldraw[color=gray] (4,0) circle (2pt);
   \filldraw[color=gray] (5,0) circle (2pt);
   \filldraw[color=gray] (6,0) circle (2pt);
   \filldraw (7,0) circle (2pt);
   \filldraw[color=gray] (0,1) circle (2pt);
   \filldraw[color=gray] (1,1) circle (2pt);
   \filldraw[color=gray] (2,1) circle (2pt);
   \filldraw[color=gray] (3,1) circle (2pt);
   \filldraw (4,1) circle (4pt);
   \filldraw (5,1) circle (2pt);
   \filldraw (6,1) circle (2pt);
   \filldraw[color=gray] (0,2) circle (2pt);
   \filldraw[color=gray] (1,2) circle (2pt);
   \filldraw (2,2) circle (2pt);
   \filldraw (3,2) circle (2pt);
   \filldraw (4,2) circle (2pt);
   \filldraw (5,2) circle (2pt);
   \filldraw[color=gray] (0,3) circle (2pt);
   \filldraw[color=gray] (1,3) circle (2pt);
   \filldraw (2,3) circle (2pt);
   \filldraw (3,3) circle (2pt);
   \filldraw (4,3) circle (2pt);
   \filldraw[color=gray] (0,4) circle (2pt);
   \filldraw[color=gray] (1,4) circle (2pt);
   \filldraw (2,4) circle (2pt);
   \filldraw (3,4) circle (2pt);
   \filldraw[color=gray] (0,5) circle (2pt);
   \filldraw (1,5) circle (2pt);
   \filldraw (2,5) circle (2pt);
   \filldraw[color=gray] (0,6) circle (2pt);
   \filldraw (1,6) circle (2pt);
   \filldraw (0,7) circle (2pt);

   \draw (0,7) -- (7,0) -- (4,1) -- (2,2) -- (0,7);
   \draw[<->] (-0.2,4.8) -- (1.8,2.8);
   \draw[<->] (4.2,1.2) -- (2.2,3.2);
   \draw[<->] (4.8,-0.2) -- (3.8,0.8);
   \draw (3.5,2.5) node {$a$};
   \draw (0.4,3.5) node {$a_1$};
   \draw (3.9,-0.1) node {$a_2$};
   \draw (0,7) node[anchor = east] {$x_2^d$};
   \draw (7,0) node[anchor = west] {$x_1^d$};
   \draw (0,0) node[anchor = east] {$x_0^d$};
   \draw (3.9,1.2) node[anchor = west] {$\mu$};
   \draw (5,5) node {$ P_3 $};

   \begin{scope}[xshift = 3cm]
   \filldraw (10,0) circle (2pt);
   \filldraw[color=gray] (11,0) circle (2pt);
   \filldraw[color=gray] (12,0) circle (2pt);
   \filldraw[color=gray] (13,0) circle (2pt);
   \filldraw[color=gray] (14,0) circle (2pt);
   \filldraw[color=gray] (15,0) circle (2pt);
   \filldraw[color=gray] (16,0) circle (2pt);
   \filldraw[color=gray] (17,0) circle (2pt);
   \filldraw (10,1) circle (2pt);
   \filldraw (11,1) circle (2pt);
   \filldraw (12,1) circle (2pt);
   \filldraw (13,1) circle (2pt);
   \filldraw (14,1) circle (2pt);
   \filldraw (15,1) circle (2pt);
   \filldraw (16,1) circle (2pt);
   \filldraw (10,2) circle (2pt);
   \filldraw (11,2) circle (2pt);
   \filldraw (12,2) circle (2pt);
   \filldraw (13,2) circle (2pt);
   \filldraw (14,2) circle (2pt);
   \filldraw (15,2) circle (2pt);
   \filldraw (10,3) circle (2pt);
   \filldraw (11,3) circle (2pt);
   \filldraw (12,3) circle (2pt);
   \filldraw (13,3) circle (2pt);
   \filldraw (14,3) circle (2pt);
   \filldraw (10,4) circle (2pt);
   \filldraw (11,4) circle (2pt);
   \filldraw (12,4) circle (2pt);
   \filldraw (13,4) circle (2pt);
   \filldraw (10,5) circle (2pt);
   \filldraw (11,5) circle (2pt);
   \filldraw (12,5) circle (2pt);
   \filldraw (10,6) circle (2pt);
   \filldraw (11,6) circle (2pt);
   \filldraw (10,7) circle (2pt);

   \draw (10,0) node[anchor = east] {$x_0^d$};
   \draw (17,0) node[anchor = west] {$x_2^d$};
   \draw (10,7) node[anchor = east] {$x_3^d$};
   \draw (10,0) -- (16,1) -- (10,7) -- (10,0);
   \draw[<->] (13.2,0) -- (13.2,2);
   \draw[<->] (12.8,2) -- (12.8,4);
   \draw[<->] (10,-0.5) -- (13,-0.5);
   \draw (11.5,-1.2) node {$n_\mu$};
   \draw (12.4,3) node {$a$};
   \draw (14.1,1.1) node {$a_3$};
   \draw (15,5) node {$ P_1 $};
   \end{scope}
  \end{tikzpicture} \end{center}
  Then
  \begin{align*}
    f^\mu_3 &= x_0^{a_0}x_1^{a_1}x_2^{a_2}(-x_0-x_1-x_2)^{a_3}(-x_1-x_2)^a \\
    \text{and} \quad
    f^\mu_1 &= x_0^{a_0}(-x_0-x_2-x_3)^{a_1}x_2^{a_2}x_3^{a_3}(x_0+x_3)^a,
  \end{align*}
  so the Newton polytope of $f^\mu_3$ is contained in $P_3$, and the Newton
  polytope of $f^\mu_1$ is contained in $P_1$. Moreover, the coefficient of $
  x^\mu $ in $ f^\mu_3 $ is $ \pm 1 $. Analogously, we define the polynomial
  $f^\mu$ for any vertex $\mu$ of $P_1$.

  Let now $f$ be a generic sum of these polynomials $f^\mu$, where $\mu$ ranges
  over all vertices of $P_3$ and $P_1$. Then $ \Newt(f_3) = P_3 $ and $
  \Newt(f_1) = P_1 $ by construction, which means that $ \Trop(f_3) = C_3 $ and
  $ \Trop(f_1) = C_1 $.

  Note that the projection $ p^{(0,1,2)} $ is injective on the cones $
  \cone([e_0],[e_1]) $, $ \cone([e_0],[e_2])$, and $\cone([e_1],[e_2]) $,
  whereas $ p^{(0,2,3)} $ is injective on the cones $ \cone([e_0],[e_2]) $,
  $ \cone([e_0],[e_3])$, and $\cone([e_2],[e_3]) $. So these two projections
  allow to reconstruct each part of $C$ which is not contained in the interior
  of $ \cone([e_1],[e_3]) $. But $C$ does not contain any rays in this interior
  by assumption, and $ \Trop (L+(f)) $ does not contain any rays in this region
  either: $(d,0)$ and $(0,d)$ are vertices of $P_3$, so $f$ contains a term
  $ f^{(d,0)} = (x_0+x_3)^d $, which implies that $ f_{(0,1,3)} $ has a
  non-zero term $ x_0^d $.

  So altogether we conclude that $ \Trop(f) = C $, as required.
\end{proof}

We will now address ourselves to the relative realizability of tropical curves
in $L_2^3$ which are not necessarily fan curves. We start with generalizing
Notation \ref{not-numbers}.

\begin{sidepic}{pic-1a}
\begin{Notation} \label{not-gnumbers}
  Let $P$ be a lattice polytope in $\mathbb R^2$, and let $\mu$ be a vertex of
  $P$. We define
  \begin{align*}
    s_\mu(P) &= \max_{\nu \in P}\; (\nu_1 + \nu_2) - (\mu_1 + \mu_2)  
      \;\; \text{(row number of $ \mu $ in $P$)}, \\
    n_\mu(P) &= |\{ \nu  \in \mathbb N^2: \nu_1 + \nu_2 = \mu_1 + \mu_2,
      \nu \notin P \}| \\
    &\qquad \text{(lattice points not in $P$ in the diagonal of $ \mu $)}, \\
    u(P) &= \min_{\nu \in P} \; \nu_1 \;\;
      \text{(lattice distance of $P$ to the vertical axis)}.
  \end{align*}
\end{Notation}
\end{sidepic}

\begin{Proposition} \label{prop:length}
  Let $C$ be a tropical curve of degree $d$ in $ L^3_2 $ such that $\star_C(0)$
  contains a classical line, and assume that this classical line is adjacent to
  the two vertices $[0,q,q,0]$ and $[q',0,0,q']$ with $ q,q' \in \RR_{>0}$.
  Moreover, let $ \mu=(\mu_1,\mu_2) $ be a vertex of the polytope $P$ in
  $\Subdiv(C_3)$ dual to $[0,q,q]$, and let $Q$ be the polytope in
  $\Subdiv(C_1)$ dual to $[q',0,q']$.
 
  Then the curve $C$ can only be relatively realizable if $(n_\mu(P)-u(Q)) \cdot
  q' \le s_\mu(P) \cdot q $.
\end{Proposition}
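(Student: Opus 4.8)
The plan is to assume that $C$ is realized by some homogeneous $f\in\Ct[x]$ and to extract the inequality from the algebraic dependence between the two projected polynomials given by Lemma \ref{lemma:equpoly}. By Theorem \ref{mainthm} we then have $\Trop(f_3)=C_3$ and $\Trop(f_1)=C_1$; we may assume $f$ has no monomial factor, so $\Newt(f_3)=P_3$ and $\Newt(f_1)=P_1$ by Lemma \ref{lemma:markednewton}. Write $a_\nu$, $b_\nu$ for the coefficients of $f_3$, $f_1$, and set $v_0=\min_\nu\val(a_\nu)$, $w_0=\min_\nu\val(b_\nu)$. Since $f_1$ is obtained from $f_3$ by the substitution $x_1\mapsto-x_0-x_2-x_3$ with coefficients in $K$, and conversely, each $b_\nu$ is a $\ZZ$-linear combination of the $a_\nu$ and vice versa, so $v_0=w_0$.

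\emph{Step 1 (valuation data).} The point $[0,q,q]$ is the vertex of $C_3$ dual to $P$, and the edge of $C$ towards the origin projects to the edge of $C_3$ dual to the ``top'' diagonal edge $e$ of $P$, i.e.\ the one on which $\nu_1+\nu_2$ attains its maximum $M:=\max_{\nu\in P}(\nu_1+\nu_2)$ (this follows by moving from $[0,q,q]$ in the direction of the origin and checking which face of $P$ stays active). Evaluating $\trop(f_3)$ at the image $(0,0)$ of the origin, which lies on the $C_3$-edge dual to $e$, gives $\val(a_\nu)=v_0$ for the lattice points $\nu\in e$; Remark \ref{rem:condsub} at the vertex $[0,q,q]$ then yields $\val(a_\nu)=v_0+q\,(M-\nu_1-\nu_2)$ for $\nu\in P$ and $\val(a_\nu)\ge v_0+q\,(M-\nu_1-\nu_2)$ for all $\nu\in P_3$; in particular $\val(a_\mu)=v_0+q\,s_\mu(P)$. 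Running the same argument for $f_1$ at the vertex $[q',0,q']$ dual to $Q$, using that the edge of $C$ towards the origin is now dual to the ``leftmost'' vertical edge of $Q$, which lies in the line $\{\nu_1=u(Q)\}$, gives $\val(b_\nu)\ge v_0+q'(\nu_1-u(Q))$ for all $\nu\in P_1$.

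\emph{Step 2 (the algebraic relation).} The aim is to apply Lemma \ref{lemma:equpoly} with the roles of $f_1$ and $f_3$ exchanged via the symmetry $x_1\leftrightarrow x_3$ of $L_2^3$ (and, if needed, a further permutation fixing $\{x_0,x_3\}$ and $\{x_1,x_2\}$ to put $\mu$ into a standard position as a vertex of $P$), with parameters $k,l$ chosen so that the distinguished point becomes $\mu$ (coefficient $1$) and $n,m$ chosen so that the two index sets meet $P_3$ and $P_1$ in the right way. This should produce a relation $a_\mu+\sum_\nu\beta_\nu a_\nu=\sum_\nu\alpha_\nu b_\nu$ with $\alpha_\nu,\beta_\nu\in\ZZ$, in which every $\nu$ occurring in the first sum lies outside $P_3$ unless $\nu_1+\nu_2<\mu_1+\mu_2$, and every $\nu$ occurring in the second sum lies outside $P_1$ unless $\nu_1\ge n_\mu(P)$. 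Here the combinatorial hypotheses enter: $\mu$ being a vertex of $P$ is what lets the ``left'' part of the first index set be steered off $P_3$ except at $\mu$, and $n_\mu(P)$ is exactly the number of rows of $P_1$ the second index set has to avoid.

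\emph{Step 3 (conclusion), and the main obstacle.} Assume for a contradiction that $(n_\mu(P)-u(Q))\,q'>s_\mu(P)\,q$. Every $a_\nu\ne0$ in the first sum then has $\nu_1+\nu_2<\mu_1+\mu_2$, so by Step 1 $\val(a_\nu)\ge v_0+q(M-\nu_1-\nu_2)>v_0+q\,s_\mu(P)=\val(a_\mu)$; every $b_\nu\ne0$ in the second sum has $\nu\in P_1$ with $\nu_1\ge n_\mu(P)$, so by Step 1 $\val(b_\nu)\ge v_0+q'(n_\mu(P)-u(Q))>v_0+q\,s_\mu(P)=\val(a_\mu)$. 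Comparing valuations in the relation, in which $a_\mu$ has coefficient $1$, forces $\val(a_\mu)>\val(a_\mu)$, which is absurd; hence $(n_\mu(P)-u(Q))\,q'\le s_\mu(P)\,q$. The hard part is Step 2: making the explicit choice of $k,l,n,m$ and verifying, for a vertex $\mu$ of the specific cell $P$ of $\Subdiv(C_3)$, that the index sets $\Lambda_1,\Lambda_3$ of Lemma \ref{lemma:equpoly} meet $P_3$ and $P_1$ only where the Step 1 estimates are strict enough. This is the same bookkeeping as in the proof of Proposition \ref{prop:andreas}, but now phrased in terms of inequalities on valuations rather than vanishing of coefficients, and it is delicate near the hypotenuses of the two Newton triangles, where the bound $\val(a_\nu)\ge v_0+q(M-\nu_1-\nu_2)$ degenerates and one must ensure the corresponding lattice points do not occur in the relation at all.
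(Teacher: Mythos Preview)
Your outline is essentially the same argument as the paper's, but the paper packages it more cleanly and thereby avoids the bookkeeping you flag as ``the hard part''. Instead of applying Lemma \ref{lemma:equpoly} directly to $f$ over $\Ct$ and comparing valuations, the paper extracts the $t^{sq}$-coefficient of $f$ to obtain a polynomial $g\in K[x]$ (with $s=s_\mu(P)$), observes that $\mu$ is a vertex of $\Newt(g)$ with $n_\mu(\Newt(g))\ge n_\mu(P)$, and that under the violated inequality the row $n_\mu(\Newt(g))$ of $\Newt(g_1)$ is empty; this is then a direct contradiction to Proposition \ref{prop:andreas} applied to $g$. Your ``compare valuations in the relation'' is literally the same thing---taking the $t^{sq}$-coefficient of the integer-coefficient identity from Lemma \ref{lemma:equpoly} \emph{is} the identity for $g$---so completing your Step 2 amounts to redoing the proof of Proposition \ref{prop:andreas} in valuated language. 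The paper's route buys modularity: the delicate choice of $k,l,n,m$ is already done once, in the constant-coefficient setting.

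One imprecision to fix if you pursue your version: in Step 2 the claim should be that the extra $a$-points lie outside the \emph{cell} $P$ (not outside the full Newton polytope $P_3$). For $\nu$ on the diagonal of $\mu$ with $\nu\in P_3\setminus P$ you do get the strict bound $\val(a_\nu)>\val(a_\mu)$ needed in Step 3, but only because $\nu\notin P$; your Step 2 as written would declare such $\nu$ impossible, which is false in general. You also need to ensure (by the choice of parameters and the ``lower vertex'' symmetry) that no point of $\Lambda'_3$ on $\mu$'s diagonal other than $\mu$ itself lies in $P$, since for those the inequality could fail to be strict. This is exactly what the reduction to $g$ handles automatically: in $\Newt(g)$ the diagonal of $\mu$ is extremal, so $\mu$ is genuinely a vertex and Proposition \ref{prop:andreas} applies without further care.
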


\begin{proof}
  To simplify the notation, let $s = s_\mu(P)$ and $n = n_\mu(P)$.
  Assume that $C$ is relatively realizable, say by $f = \sum_{|\nu| = d}
  a_\nu x^\nu \in \Ct[x_0,x_1,x_2]$. Due to the form of $C$ we know 
  that $P$ has a face $\tau$ parallel to $\conv((d,0),(0,d))$ corresponding to 
  the edge of $C$ through the origin. Let $ \eta,\xi \in \mathbb N^2$ such
  that $\tau = \conv(\eta,\xi)$. By Remark \ref{rem:initialval}, we can
  assume that $\val(a_\eta) = 0$. Note that we then have $\val(a_\nu) \geq
  0$ for any lattice point $\nu$ in $\Newt(f)$: the origin in $\mathbb R^2$
  is either a vertex of $C_3$ or contained in an edge of $C_3$. In any case,
  the polyhedron $\tau$ is a part of the polyhedron in the Newton subdivision
  of $C_3$ dual to the origin. With Remark \ref{rem:condsub}, we thus see that
  $\val(a_\xi) = 0$ and $\val(a_\nu) \geq 0$ for all lattice points $\nu$
  in $\Newt(f)$. Let $A \subset \mathbb N^2$ such that $(A,(q,q)) \in
  \Subdiv(C_3)$, \ie we have $P = \conv(A)$ and $\mu \in A$. Due to the
  conditions on $f$ coming from $C$, we then know that $ \val(a_\nu) =
  (m+s-(\nu_1+\nu_2)) \cdot q $ for all $\nu=(\nu_1,\nu_2) \in A$ and
  $\val(a_\nu) > (m+s-(\nu_1+\nu_2))\cdot q$ for all $\nu \notin P$, where $m
  = \mu_1+\mu_2$. Hence $ \val(a_\mu) = sq $, and $\val(a_\nu) > s q $ for all
  $\nu$ with $\nu_1+\nu_2 < m$, or with $\nu_1 + \nu_2 = m$ and $\nu \notin P$.

  Let us now consider the polynomial $g = \sum_{|\nu| = d} a_\nu^{sq} x^\nu \in
  K[x_0,x_1,x_2]$, \cf Notation \ref{notation:rhs}, \ie the polynomial
  consisting of only those terms of $f$ corresponding to $t^{sq}$. With the
  given arguments, we see that $\mu$ is a vertex of $\Newt(g)$ with
  $n_\mu(\Newt(g)) \geq n$. For simplicity, let $n_\mu(g) = n_\mu(\Newt(g))$. 

  Let us consider $g_1 = g_{(0,2,3)}$. We want to apply Proposition
  \ref{prop:andreas} to the polynomial $g$ (in fact to its tropicalization
  $\Trop(L+(g))$), hence, let us consider the $n_\mu(g)$-th row of
  $\Newt(g_1)$. First of all, note that if we have $f_1(x) = f_{(0,2,3)}(x) =
  \sum_{|\nu| = d} b_\nu x^\nu$ as in Notation \ref{not:13}, we see that $g_1 =
  \sum_{|\nu| = d} b_\nu^{sq} x^\nu$. As above, we know that $Q$ has a face
  $\sigma$ parallel to $\conv((0,0),(0,d))$ which is contained in the part of
  the Newton subdivision of $C_1$ dual to the origin. Hence, if we write
  $\sigma = \conv(\eta,\xi)$ then $ \val(b_\eta) = \val(b_\xi) $ is minimal in
  $\Newt(f_1)$. Since the coefficients $a_\nu$ are linear combinations of the
  $b_\nu$ and vice versa, this minimal valuation must be zero. So the
  conditions on $\Subdiv(C_1)$ ensure that $\val(b_\nu) \geq (\nu_1-u) \cdot
  q'$ for all lattice points $\nu$ in $\Newt(f_1)$. If $\nu$ is a lattice point
  in $\Newt(f_1)$ with $\nu_1 q' > uq' + sq $, we thus see that $ \val(b_\nu) >
  sq $, and so $\nu \notin \Newt(g_1)$. 

  Hence, if we had $ n_\mu(g) q' > uq' + sq $, there would be no points
  in the $n_\mu(g)$-th diagonal of $\Newt(g_1)$. By Proposition
  \ref{prop:andreas}, we know that such a polynomial $g$ cannot exist, hence
  $C$ would not be relatively realizable. Thus, we must have $n_\mu(g) q'
  \le uq' + sq $, and we finally get $ nq' \le n_\mu(g) \, q' \le uq' + sq $,
  \ie $ (n-u) \, q' \leq sq $.
\end{proof}

From now on, we will restrict our attention to tropical curves $C$ in $ L^3_2 $
with only one bounded edge, which in addition passes through the origin. In
this case, as already presented in the introduction, we will be able to
give a complete non-algorithmic answer to the relative realizability problem.

Note that the subdivisions of the two Newton polytopes $ P_3 $ and $ P_1 $ 
of the projections $ C_3 $ and $ C_1 $ are trivial in this case. Moreover, the 
numbers of Notation \ref{not-gnumbers} adapted to this situation, as well
as to vertices of $P_1$ instead of $P_3$, are exactly the numbers defined in
Notation \ref{not-numbers}.

\begin{Corollary} \label{cor:length}
  Let $C$ be a tropical curve of degree $d$ in $L_2^3$ that has only one
  bounded edge, passing through the origin with adjacent vertices $[0,q,q,0]$
  and $[q',0,0,q']$ for some $ q,q' \in \RR_{>0} $. Then $C$ can only be
  relatively realizable if $ n_\mu \cdot q' \leq s_\mu \cdot q $ for every
  vertex $ \mu $ of $ P_3 $ and $ n_\mu \cdot q \leq s_\mu \cdot q' $ for every
  vertex $ \mu $ of $ P_1 $.
\end{Corollary}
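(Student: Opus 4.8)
The plan is to read the Corollary off from Proposition \ref{prop:length}, using the observation recorded just above it that when $C$ has a single bounded edge, passing through the origin, the Newton subdivisions of $C_3$ and $C_1$ are trivial. In the notation of Proposition \ref{prop:length} this means that the polytope $P$ dual to $[0,q,q]$ is all of $P_3$, the polytope $Q$ dual to $[q',0,q']$ is all of $P_1$, and that for a vertex $\mu$ of $P_3$ the numbers $s_\mu(P)$, $n_\mu(P)$ of Notation \ref{not-gnumbers} agree with the numbers $s_\mu$, $n_\mu$ of Notation \ref{not-numbers}.

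I would first settle the vertices of $P_3$. By definition the Newton polytope $P_1 = \Newt(C_1)$ of the tropical curve $C_1$ touches all three faces of the standard $d$-simplex, in particular the face on which the first coordinate vanishes, so $u(Q) = u(P_1) = 0$. Plugging $P = P_3$, $Q = P_1$ and $u(Q) = 0$ into the inequality of Proposition \ref{prop:length} gives $n_\mu \cdot q' \le s_\mu \cdot q$ for every vertex $\mu$ of $P_3$, which is the first of the two asserted conditions.

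For the vertices of $P_1$ I would use the symmetry of $L_2^3$. Let $\sigma$ be the coordinate permutation interchanging $x_0 \leftrightarrow x_2$ and $x_1 \leftrightarrow x_3$; it fixes the linear form $x_0+x_1+x_2+x_3$, hence induces an automorphism of $\PP^3_{\Ct}$ preserving $X$ and a lattice automorphism of $\RR^4/\RR\cdot\one$ preserving $L_2^3$, with respect to which $\Trop$ is equivariant, so that $C' := \sigma_*C$ is relatively realizable if and only if $C$ is. Now $\sigma$ sends the basis $(0,1,2)$ of $M(L)$ to $(0,2,3)$ and fixes the line through the origin spanned by $[0,1,1,0]$, while it carries the vertices $[0,q,q,0]$ and $[q',0,0,q']$ adjacent to the bounded edge of $C$ to $[q,0,0,q]$ and $[0,q',q',0]$ respectively. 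Thus $C'$ is again a degree-$d$ tropical curve in $L_2^3$ with a single bounded edge through the origin, its adjacent vertices are $[0,q',q',0]$ and $[q,0,0,q]$ (so that the roles of $q$ and $q'$ are swapped), and its projections $C'_3$, $C'_1$ are $C_1$, $C_3$ up to a unimodular change of coordinates on the Newton polytopes, which by the remark preceding the Corollary (the clause ``as well as to vertices of $P_1$'') leaves $s_\mu$ and $n_\mu$ unchanged. Applying the previous paragraph to $C'$ now yields $n_\mu \cdot q \le s_\mu \cdot q'$ for every vertex $\mu$ of $P_1$, and the proof is complete. The only points needing care are the bookkeeping in this last step — checking that $\sigma$ genuinely exchanges the two bases of $M(L)$ and the two endpoint vertices, so that $q$ and $q'$ are swapped and not fixed — together with the already-noted reduction of the quantities of Notation \ref{not-gnumbers} to those of Notation \ref{not-numbers}; no estimate beyond Proposition \ref{prop:length} enters.
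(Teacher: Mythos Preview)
Your proposal is correct and follows the same line as the paper: both apply Proposition \ref{prop:length} with $P=P_3$, $Q=P_1$, $u(Q)=0$ for vertices of $P_3$, and then handle vertices of $P_1$ by symmetry. The paper's proof is a two-line ``the case $\mu\in P_1$ is analogous'', while you make the analogy explicit via the involution $\sigma$ swapping $x_0\leftrightarrow x_2$, $x_1\leftrightarrow x_3$; this is a legitimate and arguably cleaner way to justify the word ``analogous''. One small point of care: your closing appeal to ``the remark preceding the Corollary'' does not quite do the work you want---that remark only says that Notation \ref{not-gnumbers} specializes to Notation \ref{not-numbers}, not that $s_\mu,n_\mu$ are invariant under your unimodular change. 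In fact the explicit check is that the map $(\nu_1,\nu_2)\mapsto(\nu_2,\,d-\nu_1-\nu_2)$ from $P_1$ to $P'_3$ carries the vertical line $\nu_1=\mu_1$ bijectively onto the diagonal $\nu'_1+\nu'_2=d-\mu_1$, whence $s_{\mu'}=d-\mu'_1-\mu'_2=\mu_1=s_\mu$ and $n_{\mu'}=n_\mu$. With that line added, the bookkeeping is airtight.
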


\begin{proof}
  The statement for a vertex $ \mu $ of $ P_3 $ follows from Proposition
  \ref{prop:length} with $ P=P_3 $, $ Q=P_1 $, and $u(Q) = 0$. The case $ \mu
  \in P_1 $ is analogous.
\end{proof}

\begin{sidepic}{curvel32}
  \begin{Example}
    For $ q,q' \in \RR_{>0}$, consider the tropical curve $C$ in $L_2^3$ shown
    on the right. It has vertices $[0,q,q,0]$ and $[q',0,0,q']$, the joining
    edge has weight $3$, and the cones of its recession fan are spanned by
    the vectors $ [1,0,0,0] $ (with weight $2$), $ [1,0,0,3] $, $ [0,1,0,0] $,
    and $ [0,2,3,0] $.

    The corresponding Newton polytopes $ P_3 $ and $ P_1 $ are shown in the
    picture below.
  \end{Example}
\end{sidepic}

  \begin{center} \begin{tikzpicture}[scale=0.65]
   \filldraw[color=gray] (0,0) circle (2pt);
   \filldraw[color=gray] (1,0) circle (2pt);
   \filldraw[color=gray] (0,1) circle (2pt);
   \filldraw[color=gray] (2,0) circle (2pt);
   \filldraw[color=gray] (1,1) circle (2pt);
   \filldraw (0,2) circle (3.5pt);
   \filldraw (3,0) circle (2pt);
   \filldraw (0,3) circle (2pt);
   \filldraw (2,1) circle (2pt);
   \filldraw (1,2) circle (2pt);
   \draw (0,3) -- (3,0) -- (0,2) -- (0,3);
   \draw (2.5,2.5) node {$P_3$};
   \draw (-0.4,2) node {$\mu$};

   \filldraw (8,0) circle (2pt);
   \filldraw[color=gray] (9,0) circle (2pt);
   \filldraw (8,1) circle (2pt);
   \filldraw (8,2) circle (2pt);
   \filldraw (9,1) circle (2pt);
   \filldraw[color=gray] (10,0) circle (2pt);
   \filldraw[color=gray] (11,0) circle (2pt);
   \filldraw (8,3) circle (2pt);
   \filldraw (10,1) circle (3.5pt);
   \filldraw (9,2) circle (2pt);
   \draw (8,0) -- (10,1) -- (8,3) -- (8,0);
   \draw (10.5,2.5) node {$P_1$};
   \draw (10.4,1) node {$\nu$};
  \end{tikzpicture} \end{center}

  The only interesting vertices are $\mu$ in $P_3$ and $\nu$ 
  in $P_1$, as for all other vertices $ \eta $ of $ P_3 $ or $ P_1 $ we have
  $ s_\eta = n_\eta = 0 $. As $ n_\mu = s_\nu = 2 $ and $ n_\nu = s_\mu =
  1 $, Corollary \ref{cor:length} implies that $C$ can only be realizable
  if $ 2 q' \le q $ and $ q \le 2 q' $, \ie if $ \frac q{q'} = 2 $. On the
  other hand, it can be checked using Algorithm \ref{alg-1} (or Proposition
  \ref{prop:posreal2} below) that $C$ is in fact realizable if $ \frac q{q'} =
  2 $.

\begin{Remark} \label{rem:length}
  Let us consider the limit case $ q=0 $, \ie if the star of $C$ at the origin
  contains $ \cone ([1,0,0,1]) $ with some weight, but not necessarily $
  \cone([0,1,1,0]) $, and $P$ is considered to be the polyhedron in the
  subdivision of $ C_3 $ dual to the origin. If we follow the proof of
  Proposition \ref{prop:length}, we would choose $g = \sum_{|\nu| = d}
  a_\nu^{0} x^\nu$ in $K[x_0,x_1,x_2]$ for any vertex of $P$ since we have $q =
  0$. So the Newton polytope of $g$ completely contains $P$. Hence, in the
  case $q = 0$, the proof of Proposition \ref{prop:length} only works if we
  choose $\mu$ to be a vertex of $P$ such that $s_\mu(P)$ is maximal among the
  vertices of $P$. The resulting necessary condition for realizability is $
  n_\mu(P) \le u(Q) $. In the other limit case $ q'=0 $, the statement of
  Proposition \ref{prop:length} is trivial.
  
  Correspondingly, for Corollary \ref{cor:length} let us consider curves with
  one bounded edge, passing through the origin with direction $ [1,0,0,1] $,
  such that the origin is one of its adjacent vertices (and thus $ q=0 $ or $
  q'=0 $), and the rays adjacent to the two vertices are contained in two
  opposite cones of $ L^3_2 $. In this case, the necessary condition for
  realizability is $n_\mu = 0$, where $\mu$ is a vertex of $P_3$, or $P_1$
  respectively, such that $s_\mu$ is maximal. Due to the form of $P_3$, we see
  that if $n_\mu = 0$ for a vertex of $P_3$ with maximal $s_\mu$, then we have
  $n_\nu = 0$ for any vertex $\nu$ of $P_3$. The analogous statement is true
  for $P_1$. Hence, in the degenerated case $q =0$, or $q' = 0$ the statement
  of Corollary \ref{cor:length} still holds.
\end{Remark}

Let us now prove that tropical curves in $L_2^3$ with one bounded edge, passing
through the origin, and with realizable recession fan, are realizable if and
only if they fulfill the length conditions of Corollary \ref{cor:length}. The
proof is based on the same idea as the proof of Proposition \ref{prop:posreal}.
Note that the realizability of the recession fan can be checked by Proposition
\ref{prop:posreal} since this fan is also contained in two opposite cones of
$L_2^3$.
 
\begin{Proposition} \label{prop:posreal2}
  Let $C$ be a tropical curve of degree $d$ in $L_2^3$ with exactly one bounded
  edge, passing through the origin, and with vertices $ [0,q,q,0] $ and $
  [q',0,0,q'] $. Assume moreover that the recession fan of $C$ is relatively
  realizable. Then $C$ is relatively realizable in $L$ if and only if
  $ n_\mu \cdot q' \leq s_\mu \cdot q $ for every vertex $ \mu $ of $ P_3 $ and
  $ n_\mu \cdot q \leq s_\mu \cdot q' $ for every vertex $ \mu $ of $ P_1 $.
\end{Proposition}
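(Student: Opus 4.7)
The direction ``$\Rightarrow$'' is immediate from Corollary \ref{cor:length}. For ``$\Leftarrow$'', the plan is to adapt the construction of Proposition \ref{prop:posreal} by multiplying the building-block polynomials there by suitable $t$-powers, and then verify that the resulting polynomial realizes $C$.

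Since $\Rec(C)$ is relatively realizable by hypothesis, Proposition \ref{prop:posreal} applied to $\Rec(C)$ provides, for every vertex $\mu$ of $P_3$ or of $P_1$, a polynomial $f^\mu \in K[x_0,x_1,x_2,x_3]$ of degree $d$ whose projections satisfy $\Newt(f^\mu_i) \subset P_i$ for $i=1,3$ and whose coefficient at $x^\mu$ in the appropriate projection is $\pm 1$. I will then set
\[
  f \;:=\; \sum_\mu \lambda_\mu\, t^{c_\mu}\, f^\mu
  \quad\text{with}\quad
  c_\mu \;:=\; \begin{cases} s_\mu\, q & \text{if $\mu$ is a vertex of $P_3$,}\\ s_\mu\, q' & \text{if $\mu$ is a vertex of $P_1$,} \end{cases}
\]
where $\lambda_\mu \in K^\ast$ are chosen generically and the sum ranges over all vertices of $P_3$ and of $P_1$. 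To prove $\Trop(L+(f))=C$ via Theorem \ref{mainthm}, it then suffices, by the argument in the last paragraph of the proof of Proposition \ref{prop:posreal} (applicable since $C$ has no rays in the interior of $\cone([e_1],[e_3])$), to verify $\Trop(f_3) = C_3$ and $\Trop(f_1) = C_1$.

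Because the Newton subdivisions of $P_3$ and $P_1$ are trivial in the present setup, the plane curves $C_3$ and $C_1$ are fans with a single vertex at $(q,q)$ or $(-q',0)$ respectively, dual to $P_3$ or $P_1$. Lemma \ref{lem-subdiv} then translates the required tropicalizations into the conditions $\val(a_\nu) = s_\nu q$ at vertices $\nu$ of $P_3$ with $\val(a_\nu) \ge s_\nu q$ elsewhere on $P_3$, and analogously for the coefficients $b_\nu$ of $f_1$ with $q'$ in place of $q$. For $\mu$ a vertex of $P_3$, the explicit form of $f^\mu$ from Proposition \ref{prop:posreal} puts $\mu$ at the maximal row-number in the support of $f^\mu_3$; the contribution of $t^{c_\mu}f^\mu$ to the coefficient of $x^\nu$ in $f_3$ thus has valuation $s_\mu q \ge s_\nu q$, with equality exactly when $\mu = \nu$. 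For $\mu$ a vertex of $P_1$, the analogous analysis of the symmetric form of $f^\mu$ shows that the support of $f^\mu_3$ is confined to rows of $P_3$ with row-number at most $n_\mu$, so the contribution has valuation $s_\mu q' \ge n_\mu q \ge s_\nu q$ by the length hypothesis for vertices of $P_1$. Genericity of the $\lambda_\mu$ then rules out accidental cancellations at the equality cases, giving $\Trop(f_3) = C_3$; the verification $\Trop(f_1) = C_1$ is parallel and uses the length hypothesis for vertices of $P_3$.

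The main obstacle will be the combinatorial step of confirming the row-number bound for $\Newt(f^\mu_3)$ when $\mu$ is a vertex of $P_1$ (and symmetrically for $\Newt(f^\mu_1)$ when $\mu$ is a vertex of $P_3$); this is done by expanding the explicit product form of $f^\mu$, and it is precisely this bound which makes the length hypothesis $n_\mu q \le s_\mu q'$ translate into the valuation inequality $c_\mu \ge s_\nu q$ needed to ensure $\Trop(f_3)=C_3$.
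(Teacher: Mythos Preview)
Your proposal is correct and follows essentially the same approach as the paper: both construct the realizing polynomial as a generic $K^*$-combination $\sum_\mu \lambda_\mu\, t^{c_\mu} f^\mu$ of the building-block polynomials $f^\mu$ from Proposition \ref{prop:posreal}, with $c_\mu = s_\mu q$ or $s_\mu q'$, and both verify the valuation conditions on $f_3$ and $f_1$ via the same row-number bounds on $\Newt(f^\mu_3)$ and $\Newt(f^\mu_1)$ (namely $\le s_\mu$ and $\le n_\mu$, respectively) combined with the length hypothesis. The only cosmetic difference is that the paper first groups the $f^\mu$ by their common $t$-exponent $k$ via Remark \ref{remark:decomp} and writes $f = \sum_{k\in\RHS(C)} f^k t^k$, whereas you sum directly over the vertices; one minor imprecision is your phrase ``equality exactly when $\mu=\nu$'', since distinct vertices can share the same row-number, but this is harmless because your genericity argument already accounts for it.
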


\begin{proof}
  We know already by Corollary \ref{cor:length} that $C$ is not relatively 
  realizable if one of the length conditions is not fulfilled. So let us prove
  the other direction of the statement.

  With Remark \ref{remark:decomp}, the conditions on the coefficients $a_\nu$
  of a polynomial $ f = \sum_\nu a_\nu x^\nu \in \Ct[x_0,x_1,x_2] $ realizing
  $C$ can be decomposed into conditions on the parameters $a_\nu^k$ for fixed
  $k \in \RHS(C)$, where $ a_\nu = \sum_{k \in \RHS(C)} a_\nu^k t^k $. Hence if
  we find a polynomial $ f^k = \sum_\nu a_\nu^k x^\nu $ fulfilling the
  conditions on the $a_\nu^k$ for all $k \in \RHS(C)$, then $f =
  \sum_{k\in\RHS(C)} f^k t^k$ realizes $C$.

  Since $C$ has only two vertices, we have $\Subdiv(C_3) = \{ (P_3,(q,q)) \}$
  and $\Subdiv(C_1) = \{ (P_1,(-q',0)) \}$. Assuming that $\val (a_{(d,0)}) =
  0$, the conditions on the $a_\nu^k$ have the following form: For any lattice
  point $\nu$ in $P_3$ we get the condition $a_\nu^k = 0$ for all $k \in
  \RHS(C)$ with $k < s_\nu q$. For any vertex $\nu$ of $P_3$, we get the
  additional condition $a_\nu^{s_\nu q} \neq 0$. Correspondingly, using
  Notation \ref{not:13} the conditions on the coefficients $ b_\nu $ of $ f_1 $
  are given by $b_\nu^k = 0$ for all lattice points $\nu$ in $P_1$ and all $k
  \in \RHS(C)$ with $k < s_\nu q'$, and $b_\nu^{s_\nu q'} \neq 0$ for all
  vertices $\nu$ of $P_1$.  

  For any vertex $\mu$ of $P_3$ or $P_1$, let $f^\mu$ be as defined in the
  proof of Proposition \ref{prop:posreal}. For $k \in \RHS(C)$, we define $f^k$
  as a generic sum over $f^\mu$, where $\mu$ ranges over all vertices $\mu$ of
  $P_3$ with $ s_\mu q= k$ and all vertices $\mu$ of $P_1$ with $ s_\mu q'
  = k$. As the recession fan of $C$ is relatively realizable and projects to
  curves dual to the same polytopes $P_1$ and $P_3$, we know by Proposition
  \ref{prop:posreal} that $l_\mu \geq r_\mu$ for any vertex $\mu$ of $P_1$ and
  $P_3$. Hence, the polynomials $f^k_3$ and $f^k_1$ only contain monomials
  $x^\nu$ with $\nu \in P_3$ and $\nu \in P_1$, respectively. We claim that
  $f^k$ fulfills the conditions on the parameters $a_\nu^k$: By construction,
  we have $a_\nu^{s_\nu q} \neq 0$ for all vertices $\nu$ of $P_3$ and
  $b_\nu^{s_\nu q'} \neq 0$ for all vertices $\nu$ of $P_1$. To show that
  $a_\nu^k = 0$ for all lattice points in $P_3$ with $k < s_\nu q$ and $b_\nu^k
  = 0$ for all lattice points in $P_1$ with $k < s_\nu q'$, we show that these
  conditions are fulfilled for any polynomial $f^\mu$ arising in the definition
  of $f^k$.

  Let $\mu$ be a vertex of $P_3$ with $ s_\mu q = k$. Due to the definition of
  $f^\mu$, it is clear that $f^\mu$ only contains monomials $x^\nu$ with $
  s_\nu \le s_\mu $, and thus it fulfills the conditions $a_\nu^k = 0$ for $
  s_\nu q > k$. So let us consider $f^\mu_1$. We have seen in the proof of
  Proposition \ref{prop:posreal} that $f^\mu_1$ only contains monomials $x^\nu$
  with $ s_\nu \leq n_\mu $. But we have assumed $ n_\mu q' \le s_\mu q = k$,
  hence the polynomial $f^\mu$ only contains monomials with $ s_\nu q' \le
  n_\mu q' \le k$, \ie the condition $b_\nu^k = 0$ is fulfilled for all $\nu$
  with $ s_\nu q' > k$.

  Analogously, one shows that $f^\mu$ with $\mu$ a vertex of $P_1$ and $ s_\mu
  q' = k$ fulfills the above conditions. Setting $f = \sum_{k \in
  \RHS(C)} f^k t^k$, we thus have $\Subdiv(f_3) = \Subdiv(C_3)$ and
  $\Subdiv(f_1) = \Subdiv(C_1)$. As in the proof of Proposition
  \ref{prop:posreal}, we see that $\Trop(f)$ does not meet the relative
  interior of $\cone([e_1],[e_3])$, so $f$ realizes $C$.
\end{proof}

\begin{Remark} \label{rem:degenerate}
  In the spirit of Remark \ref{rem:length}, let us consider the degenerated
  cases when $ q=0 $ or $ q'=0 $. If $ q=q'=0 $ then $C$ has the origin as its
  only vertex, so we can use Proposition \ref{prop:posreal} to check if $C$ is
  relatively realizable. If only $ q=0 $, the curve has still one bounded edge,
  of direction $ [1,0,0,1] $ and weight $d$, with one vertex being the origin.
  The necessary conditions in this case are $n_\mu = 0$ for all vertices $\mu$
  of $P_3$. Following the proof of Proposition \ref{prop:posreal2} (with the
  assumption that $\Rec(C)$ is realizable), we see that these conditions are 
  also sufficient.
\end{Remark}

\begin{Remark}[Result of Proposition \ref{prop:posreal2}]
  Let $C$ be a tropical curve as above, of degree $d$ in $L_2^3$ with exactly
  one bounded edge, passing through the origin, and with vertices $ [0,q,q,0] $
  and $ [q',0,0,q'] $. We also allow the degenerated cases $q = 0$ and/or $q' =
  0$. Assume that $\Rec(C)$ is relative realizable.

  Consider the case that there are vertices $\mu,\nu$ of $P_3$ and $P_1$,
  respectively, such that $n_\mu \neq 0$ and $n_\nu \neq 0$. Note that we also
  have $s_\mu \neq 0$ and $s_\nu \neq 0$ since the $0$-th row is completely
  contained in $P_3$ and $P_1$. Among the vertices $\eta$ of $P_3$ with $s_\eta
  \neq 0$, let $\mu$ be a vertex such that $\frac{n_\mu}{s_\mu}$ is maximal.
  Similarly, let $\nu$ be a vertex of $P_1$ with $n_\nu \neq 0$ such that
  $\frac{s_\nu}{n_\nu}$ is minimal. Using Proposition \ref{prop:posreal2}, we
  see that $C$ is relatively realizable if and only if
    $$ q = q' = 0 \quad\text{or}\quad
       \frac q{q'} \in \left[ \frac{n_\mu}{s_\mu},
       \frac {s_\nu}{n_\nu} \right]. $$
  Note that if $q \neq 0$ (resp.\ $q' \neq 0$) and $C$ is relatively
  realizable, then by the length conditions of Proposition \ref{prop:posreal2}
  we must also have $q' \neq 0$ (resp.\ $q \neq 0$).

  If we have $s_\mu = 0$ for all vertices $\mu$ of $P_1$, with the argument
  from above we also have $n_\mu = 0$ for all vertices $\mu$ of $P_1$.
  Moreover, using Proposition \ref{prop:andreas}, we also have $n_\mu = 0$ for
  all vertices $\mu$ of $P_3$. The same is true if we have $s_\mu = 0$ for all
  vertices $\mu$ of $P_3$ (swap the arguments).  In any of these two cases, the
  tropical curve $C$ is relatively realizable for any $q,q' \in \mathbb R_{\geq
  0}$ by Proposition \ref{prop:posreal2} with Remark \ref{rem:degenerate}.

  Finally, let us consider the case that $n_\mu = 0$ for all vertices $\mu$ of
  $P_1$. In this case, the conditions $n_\mu \cdot q \le \mu_1 \cdot q' $ are
  trivial. Hence, by Proposition \ref{prop:posreal2} with Remark
  \ref{rem:degenerate} the tropical curve $C$ is relatively realizable for all
  $q,q'$ with either $q' = 0$ or $\frac q{q'} \in \left[ \frac{n_\mu}{s_\mu},
  \infty\right)$, where $\mu$ is a vertex of $P_3$ with $ s_\mu \neq 0 $ such
  that $\frac{n_\mu}{s_\mu}$ is maximal. The analogous statement is true if we
  have $n_\mu = 0$ for all vertices $\mu$ of $P_3$. If we have $n_\mu = 0$ for
  all vertices $\mu$ of $P_3$ and $P_1$, then $C$ is relatively realizable for
  all $q, q' \in \mathbb R_{\geq 0}$.
\end{Remark}

Let us now return to the case of a tropical curve $C$ having vertices $\mu$ of
$P_3$ and $\nu$ of $P_1$ such that $n_\mu \neq 0$ and $n_\nu \neq 0$. Assume
that $\mu$ is chosen such that $\frac{n_\mu}{s_\mu}$ maximal and $\nu$ is
chosen such that $\frac{s_\nu}{n_\nu}$ is minimal. Finally, let $I = \left[
\frac{n_\mu}{s_\mu}, \frac {s_\nu}{n_\nu} \right]$.

For $q \neq 0$ and $q' \neq 0$, we know by Proposition \ref{prop:posreal2} that
the tropical curve $C$ with vertices $[0,q,q,0]$ and $[q',0,0,q']$ is
relatively realizable if and only if its recession fan is relatively realizable
and $\frac q{q'} \in I$. However, it may happen that $I$ is empty. In the
following two examples, we will see that the relative realizability of the
recession fan does not correlate with $I$ being non-empty.

\begin{Example} \label{ex-rec}
  For $q,q' \in \mathbb R_{>0}$, consider the tropical curve $C$ in $L_2^3$
  which has only one bounded edge of weight $4$ with adjacent vertices
  $[0,q,q,0]$ and $[q',0,0,q']$, and whose projections have the following
  Newton polytopes:
  \begin{center}
    \begin{tikzpicture}[scale = 0.5]
      \filldraw (0,0)[color = gray] circle (2pt);
      \filldraw (0,1)[color = gray] circle (2pt);
      \filldraw (0,2)[color = gray] circle (2pt);
      \filldraw (0,3) circle (2pt);
      \filldraw (0,4) circle (2pt);
      \filldraw (1,0)[color = gray] circle (2pt);
      \filldraw (1,1) circle (3.5pt);
      \filldraw (1,2) circle (2pt);
      \filldraw (1,3) circle (2pt);
      \filldraw (2,0)[color = gray] circle (2pt);
      \filldraw (2,1) circle (2pt);
      \filldraw (2,2) circle (2pt);
      \filldraw (3,0)[color = gray] circle (2pt);
      \filldraw (3,1) circle (2pt);
      \filldraw (4,0) circle (2pt);
      \draw (0,4) -- (0,3) -- (1,1) -- (4,0) -- (0,4);
      \draw (2,3) node {$P_3$};
      \draw (1.1,1.1) node[anchor = north east] {$\nu$};
    \begin{scope}[xshift = 8cm]
      \filldraw (0,0) circle (2pt);
      \filldraw (0,1) circle (2pt);
      \filldraw (0,2) circle (2pt);
      \filldraw (0,3) circle (2pt);
      \filldraw (0,4) circle (2pt);
      \filldraw (1,0) circle (2pt);
      \filldraw (1,1) circle (2pt);
      \filldraw (1,2) circle (2pt);
      \filldraw[color = gray] (1,3) circle (2pt);
      \filldraw[color = gray] (2,0) circle (2pt);
      \filldraw[color = gray] (2,1) circle (2pt);
      \filldraw[color = gray] (2,2) circle (2pt);
      \filldraw[color = gray] (3,0) circle (2pt);
      \filldraw[color = gray] (3,1) circle (2pt);
      \filldraw[color = gray] (4,0) circle (2pt);
      \draw (0,0) -- (1,0) -- (1,2) -- (0,4) -- (0,0);
      \draw (2,3) node {$P_1$};
    \end{scope}
    \end{tikzpicture}
  \end{center}
  For the vertex $\nu$ we have $n_\nu = 2$ and $r_\nu = 1 > 0 = l_\nu$. So the
  recession fan of $C$ is not relatively realizable by Proposition
  \ref{prop:andreas}, and thus $C$ is not relatively realizable either.
  However, for any vertex $\mu$ of $P_3$ with $s_\mu \neq 0$ we have $
  \frac{n_\mu}{s_\mu} = 1$, and for any vertex $\eta$ of $P_1$ with $n_\eta
  \neq 0$ we have $\frac{s_\eta}{n_\eta} = 1$. Hence $ I = \Big[
  \frac{n_\mu}{s_\mu}, \frac{s_\eta}{n_\eta} \Big] = [1,1] $ is non-empty.
\end{Example}

\begin{Example} \label{ex-empty}
  For $q,q' \in \mathbb R_{>0}$ consider the tropical curve $C$ in $L_2^3$
  which has only one bounded edge of weight $5$ with adjacent vertices
  $[0,q,q,0]$ and $[q',0,0,q']$, and whose projections are described by the
  following Newton polytopes:
  \begin{center}
    \begin{tikzpicture}[scale = 0.45]
      \filldraw (0,0)[color = gray] circle (2pt);
      \filldraw (0,1)[color = gray] circle (2pt);
      \filldraw (0,2)[color = gray] circle (2pt);
      \filldraw (0,3)[color = gray] circle (2pt);
      \filldraw (0,4) circle (3.5pt);
      \filldraw (0,5) circle (2pt);
      \filldraw (1,0)[color = gray] circle (2pt);
      \filldraw (1,1) circle (2pt);
      \filldraw (1,2) circle (2pt);
      \filldraw (1,3) circle (2pt);
      \filldraw (1,4) circle (2pt);
      \filldraw (2,0)[color = gray] circle (2pt);
      \filldraw (2,1) circle (2pt);
      \filldraw (2,2) circle (2pt);
      \filldraw (2,3) circle (2pt);
      \filldraw (3,0)[color = gray] circle (2pt);
      \filldraw (3,1) circle (2pt);
      \filldraw (3,2) circle (2pt);
      \filldraw (4,0)[color = gray] circle (2pt);
      \filldraw (4,1) circle (2pt);
      \filldraw (5,0) circle (2pt);
      \draw (0,5) -- (0,4) -- (1,1) -- (5,0) -- (0,5);
      \draw (2.2,4) node {$P_3$};
      \draw (0,4.1) node[anchor = north east] {$\mu$};
    \begin{scope}[xshift = 9cm]
      \filldraw (0,0) circle (2pt);
      \filldraw (0,1) circle (2pt);
      \filldraw (0,2) circle (2pt);
      \filldraw (0,3) circle (2pt);
      \filldraw (0,4) circle (2pt);
      \filldraw (0,5) circle (2pt);
      \filldraw (1,0)[color = gray] circle (2pt);
      \filldraw (1,1) circle (2pt);
      \filldraw (1,2) circle (2pt);
      \filldraw (1,3) circle (2pt);
      \filldraw (1,4) circle (2pt);
      \filldraw (2,0)[color = gray] circle (2pt);
      \filldraw (2,1)[color = gray] circle (2pt);
      \filldraw (2,2)[color = gray] circle (2pt);
      \filldraw (2,3) circle (3.5pt);
      \filldraw (3,0)[color = gray] circle (2pt);
      \filldraw (3,1)[color = gray] circle (2pt);
      \filldraw (3,2)[color = gray] circle (2pt);
      \filldraw (4,0)[color = gray] circle (2pt);
      \filldraw (4,1)[color = gray] circle (2pt);
      \filldraw (5,0)[color = gray] circle (2pt);
      \draw (0,0) -- (1,1) -- (2,3) -- (0,5) -- (0,0);
      \draw (2.2,4) node {$P_1$};
      \draw (2,3.1) node[anchor = north west] {$\nu$};
    \end{scope}
    \end{tikzpicture}
  \end{center}

  For any vertex $\eta$ of $P_3$ and $P_1$ we have $l_\mu \geq r_\mu$, so the
  recession fan of $C$ is relatively realizable (Proposition
  \ref{prop:posreal}). However, we have $\frac{n_\mu}{s_\mu} = 1 > \frac 23 =
  \frac{s_\nu}{n_\nu}$, so $ I = \Big[ \frac{n_\mu}{s_\mu},
  \frac{s_\nu}{n_\nu} \Big] $ is empty. By Proposition \ref{prop:length},
  there are thus no $q,q' \in \mathbb R_{>0}$ such that $C$ is relatively
  realizable.
\end{Example}


  \bibliographystyle{amsalpha}
  \bibliography{references}

\providecommand{\bysame}{\leavevmode\hbox to3em{\hrulefill}\thinspace}
\providecommand{\MR}{\relax\ifhmode\unskip\space\fi MR }
\providecommand{\MRhref}[2]{%
  \href{http://www.ams.org/mathscinet-getitem?mr=#1}{#2}
}
\providecommand{\href}[2]{#2}
\begin{thebibliography}{GSW13}

\bibitem[Bir14]{Win12}
Anna~Lena Birkmeyer, \emph{{realizationMatroidsNC.lib} --- a {Singular} library
  for relative realizability questions on tropical curves},
  http://www.mathematik.uni-kl.de/\~{}gathmann/realnc.php, 2014.

\bibitem[BK11]{bogartkatz}
Tristam Bogart and Eric Katz, \emph{Obstructions to lifting tropical curves in
  surfaces in 3-space}, SIAM J. Discrete Math. (2011), to appear, arXiv:
  1101.0297.

\bibitem[BS11]{brugalleshaw}
Erwan Brugall{\'e} and Kristin Shaw, \emph{Obstructions to approximating
  tropical curves in surfaces via intersection theory}, Canadian Journal of
  Mathematics (2011), to appear, arXiv: 1110.0533.

\bibitem[DGPS]{Sing}
Wolfram Decker, Gert-Martin Greuel, Gerhard Pfister, and Hans Sch{\"o}nemann,
  \emph{Singular --- {A} computer algebra system for polynomial computations},
  http://www.singular.uni-kl.de.

\bibitem[GKZ94]{gelf}
I.~M. Gelfand, M.~M. Kapranov, and A.~V. Zelevinsky, \emph{Discriminants,
  resultants and multidimensional determinants}, Birkh{\"a}user, 1994.

\bibitem[GSW13]{wir}
Andreas Gathmann, Kirsten Schmitz, and Anna~Lena Winstel, \emph{The
  realizability of curves in a tropical plane}, arXiv: 1307.5686, 2013.

\bibitem[Gub13]{gubler}
Walter Gubler, \emph{A guide to tropicalizations}, Algebraic and Combinatorial
  Aspects of Tropical Geometry, vol. 589, Amer. Math. Soc., RI, 2013,
  pp.~125--189.

\bibitem[Jen07]{gfan}
Anders Jensen, \emph{Gfan - a software system for {G}r{\"o}bner fans}, 2007.

\bibitem[Mik04]{MI}
Grigory Mikhalkin, \emph{Decomposition into pairs-of-pants for complex
  algebraic hypersurfaces}, Topology \textbf{43} (2004), no.~5, 1035--1065,
  MR2079993.

\bibitem[MS15]{MS}
Diane Maclagan and Bernd Sturmfels, \emph{Introduction to tropical geometry},
  Graduate Studies in Mathematics, vol. 161, American Mathematical Society,
  Providence, RI, 2015.

\bibitem[NS06]{NS06}
Takeo Nishinou and Bernd Siebert, \emph{Toric degenerations of toric varieties
  and tropical curves}, Duke Math. J. \textbf{135} (2006), 1--51, arXiv:
  math.AG/0409060.

\bibitem[Oxl92]{Oxl}
James Oxley, \emph{Matroid theory}, Oxford University Press, 1992.

\bibitem[Rau09]{Rau09}
Johannes Rau, \emph{Tropical intersection theory and gravitational
  descendants}, Ph.D. thesis, University of Kaiserslautern, 2009,
  urn:nbn:de:hbz:386-kluedo-23706.

\bibitem[Spe05]{SP}
David Speyer, \emph{Tropical geometry}, Ph.D. thesis, University of California,
  Berkeley, 2005.

\bibitem[Spe07]{Spe07}
\bysame, \emph{Uniformizing tropical curves {I}: Genus zero and one}, Journal
  of Algebra and Number Theory (2007), to appear, arXiv: 0711.2677.

\bibitem[Zie02]{Zie}
G{\"u}nter~M. Ziegler, \emph{Lectures on polytopes}, Springer-Verlag New York,
  2002.

\end{thebibliography}
\end{document}